\newcommand{\arxiv}[1]{\href{http://arxiv.org/abs/#1}{\tt arXiv:\nolinkurl{#1}}}
\newcommand{\googlebooks}[1]{(preview at \href{http://books.google.com/books?id=#1}{google books})}
\theoremstyle{plain}
\newtheorem{prop}{Proposition}[section]
\newtheorem{conj}[prop]{Conjecture}
\newtheorem{thm}[prop]{Theorem}
\newtheorem{lem}[prop]{Lemma}
\newtheorem{fact}[prop]{Fact}
\newtheorem*{cor*}{Corollary}
\numberwithin{equation}{section}
\theoremstyle{remark}
\newtheorem*{rem*}{Remark}               %unnumbered remark
\newtheorem*{ex*}{Example}                %unnumbered exercise
\theoremstyle{definition}
\newtheorem*{defn*}{Definition}             % unnumbered definition
\theoremstyle{plain}
\newcounter{comment}
\newcommand{\noop}[1]{}
\def\clap#1{\hbox to 0pt{\hss#1\hss}}
\def\semicolon{;}
\def\applytolist#1{
    \expandafter\def\csname multi#1\endcsname##1{
        \def\multiack{##1}\ifx\multiack\semicolon
            \def\next{\relax}
        \else
            \csname #1\endcsname{##1}
            \def\next{\csname multi#1\endcsname}
        \fi
        \next}
    \csname multi#1\endcsname}
\def\calc#1{\expandafter\def\csname c#1\endcsname{{\mathcal #1}}}
\def\bbc#1{\expandafter\def\csname bb#1\endcsname{{\mathbb #1}}}
\def\bfc#1{\expandafter\def\csname bf#1\endcsname{{\mathbf #1}}}
\renewcommand{\imath}{\mathfrak{i}}
\renewcommand{\jmath}{\mathfrak{j}}
\newcommand{\iso}{\cong}
\newcommand{\hashdef}[2]{\@namedef{#1}{#2}}
\newcommand{\hashlookup}[1]{\@nameuse{#1}}
\newcommand{\pathtographs}{../../graphs/}}%
\newcommand{\pathtographs}{diagrams/graphs/}}
\newcommand{\bigraph}[1]{{\hspace{-3pt}\begin{array}{c}%
  \raisebox{-2.5pt}{\includegraphics[height=7.5mm]{\pathtographs \hashlookup{#1}}}% 
\end{array}\hspace{-3pt}}}
\newcommand{\tensor}{\otimes}
\definecolor{blue-unshaded}{rgb}{0.55,0.55,1}
\definecolor{blue-shaded}{rgb}{0.2,0.2,1}
\definecolor{red-unshaded}{rgb}{1,0.55,0.55}
\definecolor{red-shaded}{rgb}{1,0.2,0.2}
\newcommand{\ybelow}{\begin{tikzpicture}[scale=.8, baseline]
	\clip (-.9,-2.9) rectangle (3.9,1.9);
	\filldraw[fill=red-unshaded] (0,2) --(0,0).. controls (0,-.8) .. (1,-1) -- (1,2)--(0,2);
	\filldraw[fill=red-shaded] (1,2)--(1,-1) .. controls (2,-1.2) and (1,-1.8) .. (2,-2) -- (2,2)--(1,2);
	\filldraw[fill=red-unshaded] (4,2)--(2,2)--(2,-2) .. controls (3,-2.2) .. (3,-3)--(4,-3)--(4,2);
	\filldraw[fill=blue-shaded] (3,-3)--(3,-3) .. controls (3,-2.2) .. (2,-2) -- (2,-3)--(3,-3);
	\filldraw[fill=blue-unshaded] (2,-2)--(2,-3)--(1,-3)--(1,-1) .. controls (2,-1.2) and (1,-1.8) .. (2,-2);
	\filldraw[fill=blue-shaded] (0,2)-- (0,0).. controls (0,-.8) .. (1,-1) -- (1,-3) -- (-1,-3)--(-1,2)--(0,2);
	\node[minimum size=.7cm, shape=circle, fill=white, draw]  at (1,-1) {\tiny$K$};
	\node[minimum size=.7cm, shape=circle, fill=white, draw]  at (2,-2) {\tiny$K^*$};
	\node[minimum size=1.2cm, shape=rectangle, rounded corners = 4mm, fill=white, draw]  at (1.5,.5) {$y$};
\end{tikzpicture}
}
\newcommand{\xabove}{\begin{tikzpicture}[scale=.8, baseline]
	\clip (-.9,-2.9) rectangle (3.9,1.9);
	\filldraw[fill=red-unshaded] (0,2) .. controls (0,1.2) .. (1,1) -- (1,2)--(0,2);
	\filldraw[fill=red-shaded] (1,2)--(1,1) .. controls (2,.8) and (1,.2) .. (2,0) -- (2,2)--(1,2);
	\filldraw[fill=red-unshaded] (4,2)--(2,2)--(2,0) .. controls (3,-.2) .. (3,-1) -- (3,-3)--(4,-3)--(4,2);
	\filldraw[fill=blue-shaded] (3,-3)--(3,-1) .. controls (3,-.2) .. (2,0) -- (2,-3)--(3,-3);
	\filldraw[fill=blue-unshaded] (2,0)--(2,-3)--(1,-3)--(1,1) .. controls (2,.8) and (1,.2) .. (2,0);
	\filldraw[fill=blue-shaded] (0,2) .. controls (0,1.2) .. (1,1) -- (1,-3) -- (-1,-3)--(-1,2)--(0,2);
	\node[minimum size=.7cm, shape=circle, fill=white, draw]  at (1,1) {\tiny$K$};
	\node[minimum size=.7cm, shape=circle, fill=white, draw]  at (2,0) {\tiny$K^*$};
	\node[minimum size=1.2cm, shape=rectangle, rounded corners = 4mm, fill=white, draw]  at (1.5,-1.5) {$x$};
\end{tikzpicture}
}
\tikzstyle{conn}=[circle, draw, thick, fill=white, opaque, inner sep = .7mm]
\tikzstyle{gauge}=[circle, draw, thick, fill=white, opaque, inner sep = .7mm]
\tikzstyle{invgauge}=[circle, draw, thick, fill=black!70!white, inner sep = .7mm]
\title[The little desert?]{The little desert? Some subfactors with index in the interval $(5,3+\sqrt{5})$}
\author{Scott Morrison and Emily Peters}
\begin{document}

\begin{abstract}
Progress on classifying small index subfactors has revealed an almost empty landscape. In this paper we give some evidence that this desert continues up to index $3+\sqrt{5}$.
There are two known quantum-group subfactors with index in this interval, and we show that these subfactors are the only way to realize the corresponding principal graphs.  One of these subfactors is 1-supertransitive, and we demonstrate that it is the only 1-supertransitive subfactor with index between $5$ and $3+\sqrt{5}$.  Computer evidence shows that any other subfactor in this interval would need to have rank at least 38. We prove our uniqueness results by showing that there is a unique flat connection on each graph.  The result on 1-supertransitive subfactors is proved by an argument using  intermediate subfactors, running the `odometer' from the {\tt FusionAtlas` Mathematica} package and paying careful attention to dimensions.
\end{abstract}

\maketitle
At this point, we have a complete classification of subfactor planar algebras with index less than 5 (and hence of amenable subfactors in that range). The work of \cite{MR996454, MR1193933, MR1145672, MR1313457, MR1308617, MR1278111, MR1213139,MR1308617,MR1054961} established this up to index 4.   Haagerup's landmark work \cite{MR1317352}, followed by the results in \cite{MR1686551, MR1625762, 0909.4099, 1004.0665}, gave the classification up to index $3+\sqrt{3}$, and finally the classification up to index less than 5 appears in \cite{index5-part1, index5-part2, index5-part3, index5-part4, MR999799, MR1832764, 1102.2052, 1004.0665}. At index exactly 5 the classification is known (there are 5 group-subgroup subfactors) but has not yet appeared in the literature. 
The great surprise from these classifications has been just how few small index subfactors there are: just 10 subfactors in the index range $(4,5)$, coming in 5 pairs which are either dual or conjugate to each other. We know that at higher indices there is an incredible profusion of subfactors, and even by index $6$ there are certain wild phenomena. As we have worked through low indices, on the other hand, we see a desert; the `little desert' of our title. In this paper, we give evidence, and a conjecture, that the desert continues further.
Beyond 5, the techniques used previously seem to lose traction. The combinatorial growth in possible principal graphs becomes too rapid, and we don't have effective obstructions at the level of graphs for principal graph which begin with quadruple or higher branches. Nevertheless, this paper gives some preliminary results on the range $(5, 3+\sqrt{5})$; we completely classify the 1-supertransitive case, and prove that the two known principal graphs are uniquely realized by quantum group subfactors. 

There are two subfactors with index in the interval $(5,3+\sqrt{5})$ which are easy to construct from quantum groups.
Given a quantum group $U_q(\mathfrak{g})$, irreducible representation $V$ and root of unity $\zeta = \exp(2 \pi i/ \ell)$, there is a corresponding subfactor $\cQ(\mathfrak{g}, V, \ell)$, as long as a certain positivity condition is satisfied \cite{MR1470857}. (This condition has been completely analysed in \cite{MR1090432, MR2180373, MR2414692, MR2783128}.) We are interested in the subfactors $\cA = \cQ(\mathfrak{su}_2, V_{(2)}, 14)$ and $\cB = \cQ(\mathfrak{su}_3, V_{(1,0)}, 14)$. Here $V_{(2)}$ is the three dimensional adjoint representation of $\mathfrak{su}_2$, and $V_{(1,0)}$ is the three dimensional standard representation of $\mathfrak{su}_3$. Each has quantum dimension $q^{-2} + 1 + q^2$, and at $q = \exp(2 \pi i / 14)$ this is $d\approx2.24698$, the largest root of $x^3-2x^2-x+1$. These subfactors have principal graphs
\begin{align*}
\Gamma(\cA) & = \left(\bigraph{bwd1v1p1v1x0p1x1duals1v1x2}, \bigraph{bwd1v1p1v1x0p1x1duals1v1x2}\right) \\
\Gamma(\cB) & = \left(\bigraph{bwd1v1v1p1v1x0p0x1p0x1v0x1x0p1x0x1duals1v1v2x1x3}, \bigraph{bwd1v1v1p1v1x0p0x1p0x1v0x1x0p1x0x1duals1v1v2x1x3}\right)
\end{align*}
and both have index $d^2 \approx 5.04892$. In the language of `levels', $\ell = 14$ corresponds to $SU(2)_5$ and $SU(3)_4$. The subfactor $\cA$ is the reduced subfactor construction for the third vertex in the $A_6$ subfactor. See the case $\ell=7$, $k=3$ of \cite[Proposition 6.1]{MR1090432} for another realization of the subfactor $\cA$.  The subfactor $\cB$ first appears in \cite{MR936086}, where its index (but not its principal graph) is calculated.  Its principal graph was shortly known to the experts, but its first appearance in print is as a special case of the last section of \cite{MR1470857}.

The object of this paper is to show that these are the only subfactors with these principal graphs, and moreover that $\cA$ is the only $1$-supertransitive subfactor with index in the interval $(5, 3+\sqrt{5})$.   Our three main theorems are

\newtheorem*{thm:only1ST}{Theorem \ref{thm:only1ST}}

\begin{thm:only1ST}
The only 1-supertransitive subfactors with index in the range $(5, 3+\sqrt{5})$ have principal graph $\Gamma(\cA)$.
\end{thm:only1ST}

\newtheorem*{thm:uniquehat}{Theorem \ref{thm:uniquehat}}

 \begin{thm:uniquehat}
 There is a unique subfactor with principal graph $\Gamma(\cA)$.
 \end{thm:uniquehat}
 
 \newtheorem*{thm:uniquehex}{Theorem \ref{thm:uniquehex}}
 
  \begin{thm:uniquehex}
 There is a unique subfactor with principal graph $\Gamma(\cB)$.
 \end{thm:uniquehex}

We conjecture  that these two subfactors are the only non-$A_\infty$ subfactors (of any supertransitivity) with index in that interval, although we cannot prove this at present. See Conjecture \ref{conj} and Theorem \ref{thm:conj} for some evidence of this.

We begin with a section on graph planar algebras, introducing some new notions which are required for the rest of the paper. In \S \ref{1STclassification}, we prove Theorem \ref{thm:only1ST}, establishing that $\Gamma(\cA)$ is the only possible principal graph for a $1$-supertransitive subfactor with index in the interval $(5, 3+\sqrt{5})$. Next, in \S \ref{connections} we find that  
the eigenvalue condition from \cite[Theorem 1.7]{index5-part3} ensures that there are at most two  two gauge equivalence classes of bi-unitary connections on $\Gamma(\cA)$ which are flat. We show that there is exactly one bi-unitary connection on $\Gamma(\cB)$. 
This shows there is at most one subfactor with principal graph $\Gamma(\cB)$; since the subfactor coming from the standard representation in $SU(3)_4$ has principal graph $\Gamma(\cB)$ in fact there is exactly one such subfactor (and the bi-unitary connection we found must in fact be flat).

We then turn to finding flat elements in the graph planar algebra, for each of the two connections on $\Gamma(\cA)$. By definition, for a bi-unitary connection on a $k$-supertransitive graph to be flat, there must be a low weight flat vector in the $(k+1)$-box space of the graph planar algebra. Conversely, the existence of such a low weight flat vector guarantees the existence of \emph{some} $k$-supertransitive subfactor at the same index (although not necessary with the expected principal graph). In \S \ref{flatlowweight}, we show that one of the two connections on $\Gamma(\cA)$ has such a flat low weight vector; the classification statement from \S \ref{1STclassification} ensures that the resulting $2$-supertransitive subfactor in fact has principal graph $\Gamma(\cA)$. We show that the other connection has no such flat low weight vectors, so can not be flat. 

A word about our use of computers:  Theorem \ref{thm:only1ST} uses the `odometer' of {\tt FusionAtlas} in an essential way (see \cite{index5-part1}, which also makes essential use of the odometer, for the details of this routine).  However, with the addition of the assumption that the principal graph is finite depth, we can (and do) prove this theorem by hand.  Theorems \ref{thm:uniquehat} and \ref{thm:uniquehex} are proved by explicitly providing connections and flat low-weight elements.  Checking that these elements are flat under the given connection can be done either by hand or by computer, and we expect that you'll have more faith that we did this correctly when we say that we did it with a computer.  (Also, the {\tt Mathematica} code we used for this purpose is available with the {\tt arXiv} sources for this article.)

\section{Graph planar algebras}

Planar algebras were first defined by Jones in \cite{math.QA/9909027}, which also explains their relation to subfactors.  We do not reproduce the definition here.

\subsection{Lopsided and spherical planar algebras}
Starting with any shaded planar algebra, it is possible to rescale the pivotal structure in two different ways. This means introducing a scalar factor for each critical point in strings in the action of planar tangles. 
The identity $$
\begin{tikzpicture}[baseline=0]
	\clip (-1.1,-.9) rectangle (1.5,.9);
 	\filldraw[fill=white!50!gray] (-1,-1)--(-1,0) arc (180:0:5mm) arc (-180:0:5mm) -- (1,1)--(2,1)--(2,-1);
\end{tikzpicture}
\quad
=
\quad
\begin{tikzpicture}[baseline=0]
	\clip (.5,-.9) rectangle (1.5,.9);
 	\filldraw[fill=white!50!gray] (1,-1) -- (1,1)--(2,1)--(2,-1);
\end{tikzpicture}
$$
(and the corresponding identity in the opposite shading) contrains the possible rescalings to:
\begin{align*}
\begin{tikzpicture}[baseline=0]
	\clip (-1.5,-.5) rectangle (.5,.6);
 	\filldraw[fill=white!50!gray] (-2,1)--(-2,-1) -- (-1,-1)--(-1,0) arc (180:0:5mm) --(0,-1)--(1,-1)--(1,1);
\end{tikzpicture}
\quad
&
\mapsto x \quad
\begin{tikzpicture}[baseline=0]
	\clip (-1.5,-.5) rectangle (.5,.6);
 	\filldraw[fill=white!50!gray] (-2,1)--(-2,-1) -- (-1,-1)--(-1,0) arc (180:0:5mm) --(0,-1)--(1,-1)--(1,1);
\end{tikzpicture}
&
\begin{tikzpicture}[baseline=0,yscale=-1]
	\clip (-1.5,-.5) rectangle (.5,.6);
 	\filldraw[fill=white!50!gray] (-2,1)--(-2,-1) -- (-1,-1)--(-1,0) arc (180:0:5mm) --(0,-1)--(1,-1)--(1,1);
\end{tikzpicture}
\quad
&
\mapsto y^{-1}
\begin{tikzpicture}[baseline=0, yscale=-1]
	\clip (-1.5,-.5) rectangle (.5,.6);
 	\filldraw[fill=white!50!gray] (-2,1)--(-2,-1) -- (-1,-1)--(-1,0) arc (180:0:5mm) --(0,-1)--(1,-1)--(1,1);
\end{tikzpicture}
\\
\begin{tikzpicture}[baseline=0,yscale=-1]
	\clip (-1.5,-.5) rectangle (.5,.6);
 	\filldraw[fill=white!50!gray] (-1,-1)--(-1,0) arc (180:0:5mm) --(0,-1);
\end{tikzpicture}
\quad
&
\mapsto x^{-1} 
\begin{tikzpicture}[baseline=0, yscale=-1]
	\clip (-1.5,-.5) rectangle (.5,.6);
 	\filldraw[fill=white!50!gray] (-1,-1)--(-1,0) arc (180:0:5mm) --(0,-1);
\end{tikzpicture}
&
\begin{tikzpicture}[baseline=0]
	\clip (-1.5,-.5) rectangle (.5,.6);
 	\filldraw[fill=white!50!gray] (-1,-1)--(-1,0) arc (180:0:5mm) --(0,-1);
\end{tikzpicture}
\quad
&
\mapsto y \quad
\begin{tikzpicture}[baseline=0]
	\clip (-1.5,-.5) rectangle (.5,.6);
 	\filldraw[fill=white!50!gray] (-1,-1)--(-1,0) arc (180:0:5mm) --(0,-1);
\end{tikzpicture}
\end{align*}
If we want this rescaling to respect the star structure, we must have $y^{-1} = x^*$, since caps and cups are adjoint to each other. However, we will generally not do this; in fact throughout below we will take $y=1$.

We will call the planar algebra obtained from $\cP$ in this way $\cP^{\cap x,y}$. Even though $\cP$ and $\cP^{\cap x,y}$ are not equivalent, it is easy to describe the relationship between the actions of planar tangles. Say $\psi: \cP \to \cP^{\cap x,y}$ is the identity on the underlying vector spaces. If $T$ is a planar tangle and $z_i$ are elements in $\cP$, then
\begin{equation}
\label{eq:intertwiner}
\psi(T(z_i)) = x^{n} y^{m} T(\psi(z_i)),
\end{equation}
where $n$ is the signed count (minimums are positive, maximums are negative) of critical points which are shaded above in $T$ and $m$ is the signed count of critical points shaded below.

We call a shaded planar algebra \emph{spherical} if the two circles (shaded or unshaded inside) have the same value (that is, they are the same multiple of the appropriate empty diagram), usually called $\delta$. We call a shaded planar algebra \emph{lopsided} if the circle shaded inside has value $1$. We can always obtain a lopsided planar algebra from a spherical one, by choosing $x=\delta, y=1$ above. We can also go the other way.
%These are  two distinguished points on the line of $y=1$ rescalings.
For every rescaling, the product of the value of the two circles is constant; in particular in the lopsided planar algebra the unshaded circle has value $\delta^2$.

%An almost trivial example, which is useful when trying to remember the coefficient in Equation \eqref{eq:intertwiner} above, is just $T=\tikz \filldraw[fill=white!50!gray] (0,0) circle (1.5mm);$.  Thought of as a functional on no inputs, $T() = \delta$, while $T(\psi()) = 1$, and indeed $k=1$.

Generally, we have found that the lopsided pivotal structure is extremely helpful. Its essential importance is that it often allows us to work over a fixed number field (the values of loops must certainly lie in the scalars; only having to include the index, not the square root of the index, is a promising start). Once we are working in a fixed number field, a great many calculations become much easier, and it is possible to have a computer perform exact arithmetic very efficiently. The first use of the lopsided pivotal structure (although somewhat hidden) was in the construction of the extended Haagerup subfactor planar algebra in \S 6 of \cite{0909.4099}, where we needed to compute the moments of some elements in the graph planar algebra. It has been used subsequently in \cite{1002.0168} and \cite{MPspokes}. This paper is the first time the lopsided pivotal structure has been used alongside the theory of connections; we are now able to explicitly check flatness in cases that would have been more difficult in the spherical pivotal structure.

\subsection{The lopsided graph planar algebra}
The planar algebra of a bipartite graph was first definied in \cite{MR1865703}.  We recall that definition, as well as the lopsided version of a graph planar algebra, which we explicitly describe. For comparison, we'll show the definition of both $\cG^{\text{spherical}}(\Gamma)$ and $\cG^{\text{lopsided}}(\Gamma)$. (Somewhat confusingly, it's not the case that  $\cG^{\text{lopsided}}(\Gamma)$ is just  $\cG^{\text{spherical}}(\Gamma)^{\cap \delta,1}$, the lopsided version of $\cG^{\text{spherical}}(\Gamma)$. It's also essential, to obtain the desired number-theoretic properties, to rescale the basis.)

In both, the underlying vector spaces $\cG^{\bullet}(\Gamma)_{n, \pm}$ are just functionals on loops of length $n$ on $\Gamma$, with the base point at either an even or odd depth vertex depending on $\pm$. We'll often abuse notation and think of a loop on $\Gamma$ in place of its indicator function. To define the action of a planar tangle $T$, we specify its values $T(\gamma_i)$, where  the $\gamma_i$ are the indicator functions for loops corresponding to the input vector spaces for $T$. This element $T(\gamma_i) \in \cG^\bullet_{n}$ (here $n$ is the number of points on the outer boundary of $T$) is itself a functional on loops corresponding to the outside boundary of $T$, so we specify it by giving its values on loops $\gamma_0$:
\begin{equation}
\label{eq:action}
T(\gamma_i)(\gamma_0) = \sum_{b \in \cL} c(T, b) ,
\end{equation}
where the label set $\cL$ consists of all ways to compatibly color the strands of $T$ with edges of $\Gamma$ and the regions of $T$ with vertices of $\Gamma$, such that around each inner or outer boundary of $T$ the colors agree with the loops $\gamma_i$. The coefficients $c(T,b)$ are the so-called `critical point coefficients', which determine the difference between the spherical and lopsided versions of the graph planar algebra. In each case, $c(T,b)$ is a product over the critical points in the strings in $T$ of some function of the labels given by $b$ appearing above and below the critical point. In the spherical case, we have
$$c^{\text{spherical}}(T,b) = \prod_{\substack{\text{critical} \\ \text{points $x$}}} \sqrt{\frac{d_{\text{above }x}}{d_{\text{below }x}}}^{\operatorname{sign}(x)}$$
while in the lopsided case we have
$$c^{\text{lopsided}}(T,b) = \prod_{\substack{\text{critical} \\ \text{points $x$}}} \left(\frac{d_{\text{above }x}}{\delta^{\text{shading above }x}}\right)^{\operatorname{sign}(x)}.$$
In these formulas, $d_{\text{above }x}$ means the Frobenius-Perron dimension of the graph vertex appearing above the critical point in the labelling given by $b$, and similarly for $d_{\text{below }x}$. When we write $\delta^{\text{shading above }x}$ we just mean $1$ if the critical point is not shaded above, and $\delta$ if it is shaded above. (In  the two-sided graph planar algebra, defined in \S \ref{twosided}, sometimes this quantity is $\delta^{-1}$.) Henceforth, we'll just call the ratio $d_a / \delta^{\operatorname{shading}(a)}$ the `lopsided dimension' of $a$, written $d^{\text{lopsided}}_a$.  The quantity $\operatorname{sign}(x)$ is the sign of the second derivative at the critical point: $+1$ if the critical point is a local minimum, $-1$ if it is a local maximum.

The careful reader will note that in the above definition we have implicitly chosen a Morse function on our tangles $T$, so that we can talk about critical points and their signs. This is at first sight incompatible with the definition of a planar algebra, where the maps associated to tangles must be invariant under planar isotopies (even those which rotate the boundary discs). To resolve this, for each space $\cG(\Gamma)_{n,\pm}$ we introduce $n+1$ different bases, which we think of as having $k$ boundary points pointing upwards and $n-k$ boundary points pointing downwards. In Equation \eqref{eq:action} above, this removes the ambiguity in choosing the Morse function needed to classify boundary points.  The basis corresponding to all boundary points upwards is just the basis of indicator functions on loops, as above, and the other bases are all defined by the transporting this first basis via the tangles
$$
\newcommand{\turncontents}{
	\draw (-0.5,0)--(-0.5,2);
	\draw (0.5,0)--(0.5,2);
	\draw (1.5,0)--(1.5,2);
	\draw (2.5,0)--(2.5,1) arc (180:0:.5cm) -- (3.5,-2);
	
	\draw (.2,0)--(.2,-2);
	\draw (1.8,0)--(1.8,-2);
	
	\filldraw[fill=white] (-1,-.5) rectangle (3,.5);
}
\begin{tikzpicture}[baseline=0,scale=0.5]
\turncontents
\end{tikzpicture},
\qquad
\begin{tikzpicture}[baseline=0,scale=0.5, xscale=-1]
\turncontents
\end{tikzpicture},
\qquad
\begin{tikzpicture}[baseline=0,scale=0.5, yscale=-1]
\turncontents
\end{tikzpicture}
\quad \text{ and } \quad
\begin{tikzpicture}[baseline=0,scale=0.5, xscale=-1, yscale=-1]
\turncontents
\end{tikzpicture}
$$
These bases are coherent (that is, any two ways to modify the division into upper and lower boundary give equal maps) because in Equation \eqref{eq:action} the coefficient $c(\rho^{2\pi},b)$ for the $2\pi$ rotation is always $1$.

Consider now the linear map $\natural : \cG^{\text{spherical}} \to \cG^{\text{lopsided}}$ which rescales loops according to 
$$\natural(\gamma) = \sqrt{\left(\prod_{{\text{$a$ above}}} d^{\text{lopsided}}_a \right)\left( \prod_{{\text{$b$ below}}} {d^{\text{lopsided}}_b}\right)^{-1}} \; \; \gamma.$$
Taking care here, the vertices of $\gamma$ appearing on the left and right sides (that is, at the changeovers between upper boundary points and the lower boundary points) do not count in either of the products.
One can readily check that this intertwines the actions of planar tangles on the spherical and lopsided graph planar algebras according to the formula
\begin{equation}
\label{eq:intertwiner2}
\natural(T(z_i)) = \sqrt{\delta}^{\; n} \sqrt{\delta}^{\; -m} T(\natural(z_i)),
\end{equation}
where again $n$ is the signed count (minimums are positive, maximums are negative) of critical points which are shaded above in $T$ and $m$ is the signed count of critical points shaded below.

Note that this intertwining condition means that we can locate the lowest weight spaces, or rotational eigenspaces, using the lopsided graph planar algebra, where arithmetic is easier. In particular, $\natural$ restricts to an isomorphism between the lowest weight spaces, and an isomorphism between each rotational eigenspace.

If one transported the action of planar tangles on $\cG^{\text{lopsided}}$ across to $\cG^{\text{spherical}}$ via the map $\natural$ and its inverse, this formula shows that the action is a rescaling (as described in the previous section) of the usual action of planar tangles on $\cG^{\text{lopsided}}$ with $x = \delta ^{1/2}$ and $y=\delta^{-1/2}$. 

Nevertheless, for our purposes it wouldn't have been enough to simply rescale the original action. The map $\natural$, which rescales the basis, allows us to define the lopsided graph planar algebra over the field $\mathbb{Q}(d^{\text{lopsided}})$ generated by the lopsided dimensions. (Often, but not always, this is no bigger than the field $\mathbb{Q}(\delta^2)$ generated by the index of the graph.) The spherical graph planar algebra is defined instead over the field $\mathbb{Q}\left(\left\{\sqrt{d_a / d_b} \; \mid  \text{$a$ and $b$ adjacent}\right\}\right)$ which is generally much much larger. Indeed, usually it's impossible to identify a single generator of this resulting number field.

Finally, we define the $*$ action on the lopsided graph planar algebra simply by transporting across the $*$ action form the spherical graph planar algebra, via $\natural$ and $\natural^{-1}$. Explicitly, this gives
$$\gamma^* = \left(\prod_{{\text{$a$ above}}} d^{\text{lopsided}}_a \right)\left( \prod_{{\text{$b$ below}}} {d^{\text{lopsided}}_b}\right)^{-1}\operatorname{reverse}(\gamma)$$
on loops, extending antilinearly to the entire space.

\subsection{The two-sided graph planar algebra}\label{twosided}
Usually, the graph planar algebra is defined in terms of a single principal graph. We now introduce the `two-sided' graph planar algebra for a pair of principal graphs (with dual data) $(\Gamma, \Gamma')$. (The name comes from an interpretation of graph planar algebras, connections and flatness coming from Turaev-Viro theory; c.f. \cite{tvc}). The two-sided graph planar algebra has region colours and strand types indexed by the square 
$$\begin{tikzpicture}
	\node(NN) [rectangle, fill=red-unshaded] at (0,0) {$N-N$};
	\node(NM) [rectangle, fill=red-shaded] at (0,1) {$N-M$};
	\node(MM) [rectangle, fill=blue-unshaded] at (2,1) {$M-M$};
	\node(MN) [rectangle, fill=blue-shaded] at (2,0) {$M-N$};
	\draw (NN)--(NM)--(MM)--(MN)--(NN);
\end{tikzpicture}
$$
That is, there is a vector space $\cG_\pi$ for each closed loop $\pi$ on this square (namely, each sequence of `unshaded red', `shaded red', `unshaded blue' and `shaded blue', subject to the condition that the shadings alternate).  This vector space has basis given by the loops in the 4-partite graph for $(\Gamma, \Gamma')$ which descend to the  loop $\pi$ on the square. The even and odd vertices of $\Gamma$ lie over the unshaded red and shaded red vertices of the square, while the even and odd vertices of $\Gamma'$ lie over the unshaded blue and shaded blue vertices of the square.

As before we have two versions of the two-sided graph planar algebra, which we call spherical and lopsided.
The two actions of planar tangles are exactly as above for the one-sided graph planar algebra, with the obvious restriction that the labelings $b$ in Equation \eqref{eq:action} above respect the four different shadings in $T$, and a further interpretation of the quantity $\operatorname{shading}(a)$: this is $0$ if the shading is $N-N$ or $M-M$, $+1$ when the shading is $N-M$ and $-1$ when the shading is $M-M$.

The notion of a connection was originally formulated by Ocneanu in \cite{MR996454}.   Notice that a connection, as usually defined, is exactly an element $K $ of the space $\cG^{\text{spherical}}_\zeta$, where $\zeta$ is the loop `unshaded red, shaded red, unshaded blue, shaded blue'. 
The renormalization axiom is no longer an axiom; it is just a statement about the one-click rotation of a four-box.  
The biunitarity condition is a pair of planar equations
$$
\begin{tikzpicture}[scale=.7, baseline=0]
	\clip (-1,-1.8) rectangle (1,1.8);

	\fill[fill=red-unshaded] (-1,-2)--(-.3,-2)--(-.3,2)--(-1,2);
	\fill[fill=red-shaded] (-.3,-2)--(-.3,-1)--(.3,-1)--(.3,-2);
	\fill[fill=red-shaded] (-.3,2)--(-.3,1)--(.3,1)--(.3,2);
	\fill[blue-unshaded] (1,-2)--(.3,-2)--(.3,3)--(1,2);
	\fill[blue-shaded] (-.3,-1)--(-.3,1)--(.3,1)--(.3,-1);
	
	\draw (-.3,-2)--(-.3,2);
	\draw (.3,-2)--(.3,2);
	
	\node[conn, minimum size=8mm] at (0,1) {$K$};
	\node[conn,  minimum size=8mm] at (0,-1) {$K^*$};
	
\end{tikzpicture}
=
\begin{tikzpicture}[scale=.7, baseline=0]
	\clip (-1,-1.8) rectangle (1,1.8);

	\fill[fill=red-unshaded] (-1,-2)--(-.3,-2)--(-.3,2)--(-1,2);
	\fill[fill=red-shaded] (-.3,-2)--(-.3,2)--(.3,2)--(.3,-2);
	\fill[blue-unshaded] (1,-2)--(.3,-2)--(.3,3)--(1,2);
	
	\draw (-.3,-2)--(-.3,2);
	\draw (.3,-2)--(.3,2);
\end{tikzpicture}
\quad
\text{and}
\quad
\begin{tikzpicture}[scale=.7,baseline=0]
	\clip (-1.9,-1) rectangle (1.9,1);

	\fill[red-shaded] (-2,1) rectangle (2,0);
	\fill[blue-shaded] (-2,-1) rectangle (2,0);

	\filldraw[fill=blue-unshaded] (-1,.3) .. controls (0,.6) .. (1,.3)--(1,-.3) .. controls (0,-.6) .. (-1,-.3) -- (-1,.3);
	\filldraw[fill=red-unshaded] (-2,.6) -- (-1,.3) -- (-1,-.3) -- (-2,-.6);
	\filldraw[fill=red-unshaded] (2,.6) -- (1,.3) -- (1,-.3) -- (2,-.6);
	
	\node[conn, minimum size=8mm] at (-1,0) {$K$};
	\node[conn, minimum size=8mm] at (1,0) {$K^*$};

\end{tikzpicture}
=
\begin{tikzpicture}[scale=.7, baseline=0]
	\clip (-1.9,-1) rectangle (1.9,1);
	
	\fill[red-shaded] (-2,1) rectangle (2,0);
	\fill[blue-shaded] (-2,-1) rectangle (2,0);

	\filldraw[fill=red-unshaded] (-2,-.6) .. controls (0,0) .. (2,-.6)--(2,.6) .. controls (0,0) .. (-2,.6);
\end{tikzpicture} \; ,
$$
where $K^*$ is defined using the usual $*$-structure (i.e., $K^*(e_1 e_2 e_3 e_4) = \overline{K(e_4 e_3 e_2 e_1)}$.)

Let $\cG_{\text{red}}$ (respectively $\cG_{\text{blue}}$) be the space indexed by paths which alternate between the two shades of red (respectively blue).  Note that $\cG_{\text{red}}$ is a copy of the graph planar algebra of the principal graph $\Gamma$, and $\cG_{\text{ blue}}$ is the graph planar algebra for the dual graph $\Gamma'$.
We say that a pair of elements $(x, y) \in \cG_{\text{ blue}} \times  \cG_{\text{ red}}$ is {\em flat with respect to a connection $K$} if
\begin{equation*}
\xabove = \ybelow.
\end{equation*}
(This picture illustrates flatness of four-boxes.  The $2n$-box generalization has $n$ copies of the connection or its star sitting above $x$ and below $y$.)

Using the map $\natural$ we can push a connection across to the lopsided analogue of the two-sided graph planar algebra. It is still a biunitary, although we have to be careful because the spherical $*$ structure transported across via $\natural$ is now more complicated.  On loops, it is 
$$ \gamma^* = \prod_{{\text{$a$ above}}} d^{\text{lopsided}}_a \cdot \prod_{{\text{$b$ below}}} {d^{\text{lopsided}}_b}^{-1}  \cdot \operatorname{reverse}(\gamma).$$

It is worth noting at this point that a pair of elements $(x,y)$ is flat with respect to a connection $K$ in the spherical graph planar algebra exactly if $(\natural(x), \natural(y))$ is flat with respect to $\natural(K)$ in the lopsided graph planar algebra. This will be essential to our later calculations.

We abuse notation by saying that $x$ itself is flat if there exists a $y$ so that $(x,y)$ is flat. (The element $y$ is necessarily uniquely determined!) Notice that bi-unitarity immediately implies that every Temperley-Lieb diagram is flat. The flat elements  form a sub planar algebra which we call the flat subalgebra

We will need the following results about flat elements:
\begin{fact}\label{fact:flat-subalg}
If there are no flat elements besides Temperley-Lieb for a connection $K$ in each of the spaces $P_{j,+}$ for $j < n$, and a $k$-dimensional space of flat lowest weight vectors in $P_{n,+}$, then the flat subalgebra is $n-1$ supertransitive with excess $k$.
\end{fact}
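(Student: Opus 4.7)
The plan is to unpack the definitions of supertransitivity and excess and combine them with the hypothesis. First, I would observe that Temperley-Lieb always sits inside the flat subalgebra at every level: the excerpt notes that bi-unitarity immediately implies every Temperley-Lieb diagram is flat, so $TL_j \subseteq P_{j,+}^{\text{flat}}$ for all $j$. Combining this with the hypothesis that the only flat elements of $P_{j,+}$ are Temperley-Lieb diagrams for $j < n$, we obtain $P_{j,+}^{\text{flat}} = TL_j$ for all $j \leq n-1$. By the very definition of supertransitivity, this says that the flat subalgebra is at least $(n-1)$-supertransitive, which handles the first half of the statement.

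For the excess at level $n$, I would use the standard decomposition of a subfactor-type planar algebra into its Temperley-Lieb subalgebra and the lowest weight (annular) pieces that appear at each new level. In general, $P_{n,+}^{\text{flat}}$ is spanned by $TL_n$ together with the images under annular tangles of the flat spaces $P_{j,+}^{\text{flat}}$ for $j \leq n$. Because we just showed $P_{j,+}^{\text{flat}} = TL_j$ for $j < n$, those lower-level pieces contribute nothing new beyond $TL_n$, so any complement of $TL_n$ inside $P_{n,+}^{\text{flat}}$ must consist of lowest weight vectors. Since the flat subalgebra is closed under all planar operations (being a sub planar algebra, as stated earlier in the excerpt, so in particular closed under capping), the lowest weight vectors of the flat subalgebra coincide with the flat lowest weight vectors in $P_{n,+}$. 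By hypothesis this space is $k$-dimensional, giving excess exactly $k$.

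The main subtlety is the claim that the complement of $TL_n$ in $P_{n,+}^{\text{flat}}$ receives no contribution from annular consequences of flat elements at lower levels — but this is exactly what rules out by the assumption that the flat subalgebra equals Temperley-Lieb below level $n$. Once that point is made, the proof is essentially a translation of definitions, so I do not expect it to require much additional machinery; the whole content of the fact lies in how the hypothesis is phrased.
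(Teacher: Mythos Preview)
The paper does not actually prove this statement: it is recorded as a \emph{Fact} without proof or explicit reference, treated as a routine consequence of the definitions of the flat subalgebra, supertransitivity, and excess. Your argument correctly supplies those details---using that Temperley-Lieb is always flat to get $P_{j,+}^{\text{flat}}=TL_j$ for $j<n$, and then identifying the complement of $TL_n$ in $P_{n,+}^{\text{flat}}$ with the flat lowest weight vectors via the annular decomposition---and is exactly the kind of unpacking the authors are taking for granted.
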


\begin{thm}\label{thm:spa-and-flat-conns}
Any subfactor planar algebra $P$ defines a bi-unitary connection $K(P) \in \cG(\Gamma(P))_\zeta$, and $P$ is isomorphic to the flat subalgebra for $K(P)$ inside $\cG(\Gamma(P))$.
\end{thm}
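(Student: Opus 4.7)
The plan is to invoke Jones's embedding theorem from \cite{MR1865703}, which already realises any subfactor planar algebra $P$ with principal graph $\Gamma(P)$ as a sub planar algebra of the one-sided graph planar algebra $\cG(\Gamma(P))$, and to reassemble this data into a biunitary connection. First, I would simultaneously apply Jones's embedding to $P$ and to its dual $P'$, obtaining inclusions of $P$ into $\cG_{\text{red}}$ and of $P'$ into $\cG_{\text{blue}}$ inside the two-sided graph planar algebra $\cG(\Gamma, \Gamma')$. The element $K(P) \in \cG(\Gamma(P))_\zeta$ is then the image, under this embedding, of the canonical $4$-box obtained from the bimodule $2$-category of $N \subset M$ by reading off the identification of $(X \tensor_M \bar X) \tensor_N (X \tensor_M \bar X)$ across the four shadings of $\zeta$; concretely it is the change-of-basis matrix between the two ways of enumerating loops on the square of shadings.

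Biunitarity of $K(P)$ should then drop out. Both biunitarity relations, after unwinding the two-sided graph planar algebra action, reduce to the standard unitarity relations between fusion and splitting morphisms in a rigid $C^*$-tensor category; these hold because Jones's embedding is $*$-preserving and the bimodule identifications are unitary. Flatness of the image of $P$ is similarly essentially tautological: for $x \in P$ embedded in $\cG_{\text{red}}$, the corresponding $y \in \cG_{\text{blue}}$ (obtained by embedding the same abstract morphism via $P'$) satisfies the flatness equation because both sides of the defining diagram compute the same morphism in the bimodule category, read off by going two different ways around $\zeta$.

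The main obstacle is the reverse inclusion: showing that \emph{every} flat element of $\cG(\Gamma(P))$ with respect to $K(P)$ already lies in the image of $P$. Here I would appeal to Ocneanu's reconstruction theorem, which says that the flat subalgebra of any biunitary connection is itself a subfactor planar algebra, and whose principal graph can be read directly from the biunitary. Since both $P$ and the flat subalgebra of $K(P)$ sit inside $\cG(\Gamma(P))$ and share the principal graph $\Gamma(P)$, a dimension count at each box space forces the embedding $P \into \operatorname{Flat}(K(P))$ to be an isomorphism. Rather than redeveloping Ocneanu's machinery here, I would cite it, as the statement has been part of the folklore of subfactor theory since \cite{MR996454}.
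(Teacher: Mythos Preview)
The paper does not actually prove this theorem: immediately after the statement it writes ``Although not stated in this language, this result is well known and appears in \cite{MR1642584}. See also \cite{tvc} for a proof via Turaev-Viro theory.'' So there is no argument to compare against; the paper treats the result as background and defers to Evans--Kawahigashi and to the Turaev--Viro approach.

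Your sketch is a reasonable outline of the standard route and is compatible with what those references do. One point deserves more care than you give it: in the reverse inclusion you assert that the flat subalgebra for $K(P)$ automatically has principal graph $\Gamma(P)$, and then conclude by a dimension count. But as this very paper illustrates (the connection $K^{(1)}$ on $\Gamma(\cA)$), a biunitary connection on a graph $\Gamma$ need not have flat part with principal graph $\Gamma$; the flat subalgebra can be strictly smaller (Temperley--Lieb, in that example). What makes $K(P)$ special is precisely that it comes from an actual subfactor, so you already have an embedding $P \hookrightarrow \operatorname{Flat}(K(P))$ from the forward direction; since the flat subalgebra of \emph{any} biunitary on $\Gamma$ has box-space dimensions bounded above by those determined by $\Gamma$, and $P$ already realises those dimensions, the inclusion is forced to be an equality. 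That is the correct logic for the dimension count, but you should state it as ``$P$ is big enough'' rather than ``$\operatorname{Flat}(K(P))$ has principal graph $\Gamma(P)$'', since the latter is what you are trying to prove.
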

Although not stated in this language, this result is well known and appears in \cite{MR1642584}. See also \cite{tvc} for a proof via Turaev-Viro theory.

\subsection{The gauge group}
The {\em gauge group} for a given 4-partite graph $\Xi$ is a copy of the unit circle for every edge in $\Xi$, i.e., 
$$\text{Gauge}(\Xi) = \{g: \Xi \rightarrow \mathbb{T} \} \simeq \mathbb{T}^{|E(\Xi)|}.$$  
Elements of the gauge group can be thought of as $2$-boxes in the graph planar algebra:  If $g \in \text{Gauge}(\Xi)$ and $\gamma$ is the 2-loop going through edges $e_1$ and $e_2$, then $g (\gamma) = \delta_{e_1,e_2} g(e_1) $. 
Thus, $\text{Gauge}(\Xi)$ acts on the graph planar algebra of $\Xi$, by 
$$
\begin{tikzpicture}[baseline=.5cm]
	\draw (0,-.8)--(0,1.8);
	\draw (1,-.8)--(1,1.8);
	\draw (2,-.8)--(2,1.8);
	\filldraw[fill=white, thick, rounded corners = 4mm] (-.5,0) rectangle (2.5,1);
	\node at (1,.5) {$X$};
\end{tikzpicture}
\mapsto
\begin{tikzpicture}[baseline=.5cm]
	\draw (0,-.8)--(0,1.8);
	\draw (1,-.8)--(1,1.8);
	\draw (2,-.8)--(2,1.8);
	\filldraw[fill=white, thick, rounded corners = 4mm] (-.5,0) rectangle (2.5,1);
	\node at (1,.5) {$X$};

	\node[gauge] at (0, -.4) {$g$};
	\node[gauge] at (0, 1.4) {$g$};
	\node[gauge] at (1, -.4) {$g$};
	\node[gauge] at (1, 1.4) {$g$};
	\node[gauge] at (2, -.4) {$g$};
	\node[gauge] at (2, 1.4) {$g$};

\end{tikzpicture}
$$

If $K$ is a biunitary connection, then $gK$ is again a biunitary connection.  Let a black bead represent $g$, and a white bead represent $g^*=g^{-1}$; then the first biunitarity equation for $gK$ holds:
$$
\begin{tikzpicture}[scale=.7, baseline=0]
	\clip (-1,-2) rectangle (1,2);

	\fill[fill=red-unshaded] (-1,-2)--(-.3,-2)--(-.3,2)--(-1,2);
	\fill[fill=red-shaded] (-.3,-2)--(-.3,-1)--(.3,-1)--(.3,-2);
	\fill[fill=red-shaded] (-.3,2)--(-.3,1)--(.3,1)--(.3,2);
	\fill[blue-unshaded] (1,-2)--(.3,-2)--(.3,3)--(1,2);
	\fill[blue-shaded] (-.3,-1)--(-.3,1)--(.3,1)--(.3,-1);
	
	\draw (-.3,-2)--(-.3,2);
	\draw (.3,-2)--(.3,2);
	
	\node[conn, minimum size=8mm] at (0,1) {$K$};
	\node[conn, minimum size=8mm] at (0,-1) {$K^*$};
	
	\node[invgauge] at (-.3,1.8) {};
	\node[invgauge] at (.3,1.8) {};
	\node[invgauge] at (-.3,.2) {};
	\node[invgauge] at (.3,.2) {};
	
	\node[gauge] at (-.3,-1.8) {};
	\node[gauge] at (.3,-1.8) {};
	\node[gauge] at (-.3,-.2) {};
	\node[gauge] at (.3,-.2) {};
	
\end{tikzpicture}
=
\begin{tikzpicture}[scale=.7, baseline=0]
	\clip (-1,-2) rectangle (1,2);

	\fill[fill=red-unshaded] (-1,-2)--(-.3,-2)--(-.3,2)--(-1,2);
	\fill[fill=red-shaded] (-.3,-2)--(-.3,-1)--(.3,-1)--(.3,-2);
	\fill[fill=red-shaded] (-.3,2)--(-.3,1)--(.3,1)--(.3,2);
	\fill[blue-unshaded] (1,-2)--(.3,-2)--(.3,3)--(1,2);
	\fill[blue-shaded] (-.3,-1)--(-.3,1)--(.3,1)--(.3,-1);
	
	\draw (-.3,-2)--(-.3,2);
	\draw (.3,-2)--(.3,2);
	
	\node[conn, minimum size=8mm] at (0,1) {$K$};
	\node[conn, minimum size=8mm] at (0,-1) {$K^*$};
	
	\node[invgauge] at (-.3,1.8) {};
	\node[invgauge] at (.3,1.8) {};
	
	\node[gauge] at (-.3,-1.8) {};
	\node[gauge] at (.3,-1.8) {};
	
\end{tikzpicture}
=
\begin{tikzpicture}[scale=.7, baseline=0]
	\clip (-1,-2) rectangle (1,2);

	\fill[fill=red-unshaded] (-1,-2)--(-.3,-2)--(-.3,2)--(-1,2);
	\fill[fill=red-shaded] (-.3,-2)--(-.3,2)--(.3,2)--(.3,-2);
	\fill[blue-unshaded] (1,-2)--(.3,-2)--(.3,3)--(1,2);
	
	\draw (-.3,-2)--(-.3,2);
	\draw (.3,-2)--(.3,2);
	
	\node[invgauge] at (-.3,1.8) {};
	\node[invgauge] at (.3,1.8) {};
	
	\node[gauge] at (-.3,-1.8) {};
	\node[gauge] at (.3,-1.8) {};

\end{tikzpicture}
=
\begin{tikzpicture}[scale=.7, baseline=0]
	\clip (-1,-2) rectangle (1,2);

	\fill[fill=red-unshaded] (-1,-2)--(-.3,-2)--(-.3,2)--(-1,2);
	\fill[fill=red-shaded] (-.3,-2)--(-.3,2)--(.3,2)--(.3,-2);
	\fill[blue-unshaded] (1,-2)--(.3,-2)--(.3,3)--(1,2);
	
	\draw (-.3,-2)--(-.3,2);
	\draw (.3,-2)--(.3,2);
\end{tikzpicture}
$$ 
and the second biunitarity equation is verified similarly.

The {\em complex gauge group} for a given 4-partite graph $\Xi$ is a copy of the non-zero complex numbers for every edge in $\Xi$, i.e., 
$$\text{ComplexGauge}(\Xi)= \{g: \Xi \rightarrow  (\mathbb{C}^{\times})\} \simeq (\mathbb{C}^{\times})^{|E(\Xi)|}.$$  
Again, elements of the complex gauge group are $2$-boxes in the graph planar algebra, and act accordingly.   For $g$ a complex gauge group element, $gK$ is no longer necessarily a biunitary connection, but it is bi-invertible and this is often sufficient.

We define $\text{Alt}(g)(X)$ to be the result of surrounding $X$ with alternating $g$'s and $g^{-1}$'s. It's easy to see that $\operatorname{Alt}(g)$ is an isomorphism of planar algebras (it fixes Temperley-Lieb diagrams, and commutes with disjoint union and applying caps). In fact, $\text{Alt}(g)$ acts trivially on any non-essential loop on $\Gamma$, and so for principal graphs without loops this action is always trivial.

\begin{lem}
\label{lem:flat-elements-transform}
Let $X$ be an element in the graph planar algebra of $\Xi$,  $K$ be a connection on $\Xi$, and $g$ be an element of $\text{ComplexGauge}(\Xi)$.
$X$ is flat with respect to $K$ if and only if $\text{Alt}(g^{-1})(X)$ is flat with respect to $gK$.  
\end{lem}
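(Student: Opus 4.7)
The plan is to verify the biconditional by a direct diagrammatic manipulation in the two-sided graph planar algebra, exploiting the fact that the gauge beads introduced by replacing $K$ with $gK$ and by replacing $X$ with $\text{Alt}(g^{-1})(X)$ cancel precisely on the internal strands of the flatness diagram.

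First, I will write down the flatness condition for $(\text{Alt}(g^{-1})(x), \text{Alt}(g^{-1})(y))$ with respect to $gK$. In the resulting picture, each of the four boundary strands of $K$ in the diagram $\xabove$ carries a $g$ bead (from the gauge transformation $K \mapsto gK$), each boundary strand of $K^{*}$ carries a $g^{-1}$ bead (since $g^{*} = g^{-1}$, as in the bi-unitarity check above), and the four boundary strands of $x$ carry alternating $g^{-1}$ and $g$ beads from $\text{Alt}(g^{-1})$. The analogous description applies to the right-hand side involving $y$.

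Second, I examine each internal strand, meaning those joining $x$ or $y$ to the connection boxes. On such a strand, one gauge bead sits just outside $x$ (from $\text{Alt}(g^{-1})(x)$) and one sits just inside the adjacent connection (from $gK$ or $(gK)^{*}$). Around $x$, the strands alternate between those coming from $K$ and those coming from $K^{*}$, so the beads coming from the connections themselves alternate between $g$ and $g^{-1}$; this matches the alternation pattern of $\text{Alt}(g^{-1})$ after a consistent choice of starting letter, and on every internal strand the two beads multiply to the identity and cancel. The same cancellation occurs around $y$. After cancellation the equation reduces to the original flatness equation for $(x,y)$ with respect to $K$, modified only by additional gauge beads on the outermost boundary of the diagram (those strands never touching $x$ or $y$). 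These outermost beads appear identically on both sides of the equation, so they can be removed from both sides. Since every insertion of $g g^{-1} = \id$ is reversible, this establishes the biconditional.

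The main obstacle is the second step: checking that the precise alternating pattern of $g, g^{-1}$ produced by $\text{Alt}(g^{-1})$ on the boundary of the 4-box $x$ is compatible with the uniform pattern of $g$'s on the boundary of $K$ and $g^{-1}$'s on the boundary of $K^{*}$ induced by $K \mapsto gK$. This is purely a bookkeeping task about the alternating shadings around $x$ in the two-sided graph planar algebra and the choice of orientation built into $\text{Alt}$, but once matched, the remainder of the argument is simply cancelling $g \cdot g^{-1} = 1$ on each internal strand.
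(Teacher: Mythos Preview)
Your approach is essentially the same as the paper's: both arguments come down to inserting or removing $g\cdot g^{-1}=1$ pairs on the internal strands and observing that the outermost beads appear identically on both sides. The paper runs the argument in the opposite direction (starting from flatness with respect to $K$, decorating the outside with beads, then inserting $gg^{-1}$ pairs to reach the $gK$ equation), but the content is identical.

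One small imprecision worth fixing: you justify the $g^{-1}$ beads on the inverse connection by writing ``since $g^{*}=g^{-1}$''. That identity holds for the unitary gauge group but not for $\text{ComplexGauge}(\Xi)$, which is what the lemma is about. The paper's proof avoids this by writing the flatness diagram with $K^{-1}$ rather than $K^{*}$; then $(gK)^{-1}$ manifestly carries $g^{-1}$ beads, and the cancellation you describe goes through verbatim. Your bead-counting is correct, but the reason given should be that we are inverting rather than starring.
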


\begin{proof}
Letting a black bead represent $g \in \text{ComplexGauge}(\Xi)$, and a white bead be $g^{-1}$, flatness of $X$ is the assertion that there is a $Y$ such that
$$
\begin{tikzpicture}[baseline=1.5cm]
	\draw (0,-1)--(0,3.5);
	\draw (1.5,-1)--(1.5,3.5);
	\draw (3,-1)--(3,3.5);
	\filldraw[fill=white, thick, rounded corners = 4mm] (-.5,0) rectangle (3.5,1);
	\node at (1.5,.5) {$X$};
	
	\draw (-1,3.5) .. controls (-1,2.9) .. (0,2.6) -- (3,2) .. controls (4,1.7) .. (4,0) -- (4,-1);	

	\node[conn, minimum size=7mm] at (0,2.6) {\tiny$K$};
	\node[conn, minimum size=7mm] at (1.5,2.3) {\tiny$K^{{\hbox{-}\!}1}$};
	\node[conn, minimum size=7mm] at (3,2) {\tiny$K$};
\end{tikzpicture}
\quad
=
\quad
\begin{tikzpicture}[baseline=3.5cm]
	\draw (0,1)--(0,5.5);
	\draw (1.5,1)--(1.5,5.5);
	\draw (3,1)--(3,5.5);
	\filldraw[fill=white, thick, rounded corners = 4mm] (-.5,4) rectangle (3.5,5);
	\node at (1.5,4.5) {$Y$};
	
	\draw (-1,5.5) -- (-1,3.5) .. controls (-1,2.9) .. (0,2.6) -- (3,2) .. controls (4,1.7) .. (4,1);	

	\node[conn, minimum size=7mm] at (0,2.6) {\tiny$K$};
	\node[conn, minimum size=7mm] at (1.5,2.3) {\tiny$K^{{\hbox{-}\!}1}$};
	\node[conn, minimum size=7mm] at (3,2) {\tiny$K$};
\end{tikzpicture}
$$

This is true if and only if
$$
\begin{tikzpicture}[baseline=1.5cm]
	\draw (0,-1)--(0,3.5);
	\draw (1.5,-1)--(1.5,3.5);
	\draw (3,-1)--(3,3.5);
	\filldraw[fill=white, thick, rounded corners = 4mm] (-.5,0) rectangle (3.5,1);
	\node at (1.5,.5) {$X$};
	
	\draw (-1,3.5) .. controls (-1,2.9) .. (0,2.6) -- (3,2) .. controls (4,1.7) .. (4,0) -- (4,-1);	

	\node[conn, minimum size=7mm] at (0,2.6) {\tiny$K$};
	\node[conn, minimum size=7mm] at (1.5,2.3) {\tiny$K^{{\hbox{-}\!}1}$};
	\node[conn, minimum size=7mm] at (3,2) {\tiny$K$};

	\node[invgauge] at (4, -.5) {};	
	\node[invgauge] at (0, -.5) {};
	\node[gauge] at (1.5, -.5) {};
	\node[invgauge] at (3, -.5) {};
	\node[invgauge] at (0, 3.2) {};
	\node[gauge] at (1.5, 2.9) {};
	\node[invgauge] at (3, 2.6) {};
	\node[invgauge] at (-1, 3.2) {};

\end{tikzpicture}
\quad
=
\quad
\begin{tikzpicture}[baseline=3.5cm]
	\draw (0,1)--(0,5.5);
	\draw (1.5,1)--(1.5,5.5);
	\draw (3,1)--(3,5.5);
	\filldraw[fill=white, thick, rounded corners = 4mm] (-.5,4) rectangle (3.5,5);
	\node at (1.5,4.5) {$Y$};
	
	\draw (-1,5.5) -- (-1,3.5) .. controls (-1,2.9) .. (0,2.6) -- (3,2) .. controls (4,1.7) .. (4,1);	

	\node[conn, minimum size=7mm] at (0,2.6) {\tiny$K$};
	\node[conn, minimum size=7mm] at (1.5,2.3) {\tiny$K^{{\hbox{-}\!}1}$};
	\node[conn, minimum size=7mm] at (3,2) {\tiny$K$};
	
	\node[invgauge] at (-1, 5.3) {};	
	\node[invgauge] at (0, 5.3) {};
	\node[gauge] at (1.5, 5.3) {};
	\node[invgauge] at (3, 5.3) {};
	\node[invgauge] at (0, 1.9) {};
	\node[gauge] at (1.5, 1.6) {};
	\node[invgauge] at (3, 1.3) {};
	\node[invgauge] at (4, 1.3) {};
\end{tikzpicture}
$$

And inserting lots of instances of the relation $1=g \cdot g^{-1}$ along strands, we see that this is equivalent to the equality
$$
\begin{tikzpicture}[baseline=1.5cm]
	\draw (0,-1)--(0,3.5);
	\draw (1.5,-1)--(1.5,3.5);
	\draw (3,-1)--(3,3.5);
	\filldraw[fill=white, thick, rounded corners = 4mm] (-.5,0) rectangle (3.5,1);
	\node at (1.5,.5) {$X$};
	
	\draw (-1,3.5) .. controls (-1,2.9) .. (0,2.6) -- (3,2) .. controls (4,1.7) .. (4,0) -- (4,-1);	

	\node[conn, minimum size=7mm] at (0,2.6) {\tiny$K$};
	\node[conn, minimum size=7mm] at (1.5,2.3) {\tiny$K^{{\hbox{-}\!}1}$};
	\node[conn, minimum size=7mm] at (3,2) {\tiny$K$};

	\node[invgauge] at (4, -.5) {};	
	\node[invgauge] at (0, -.5) {};
	\node[gauge] at (1.5, -.5) {};
	\node[invgauge] at (3, -.5) {};
	\node[invgauge] at (0, 3.2) {};
	\node[gauge] at (1.5, 2.9) {};
	\node[invgauge] at (3, 2.6) {};
	\node[invgauge] at (-1, 3.2) {};
	
	\node[invgauge] at (.6,2.48) {};
	\node[gauge] at (.9,2.42) {};
	
	\node[gauge] at (2.1,2.18) {};
	\node[invgauge] at (2.4,2.12) {};
	
	\node[gauge] at (0,1.2) {};
	\node[invgauge] at (0,1.5) {};

	\node[invgauge] at (1.5,1.2) {};
	\node[gauge] at (1.5,1.5) {};

	\node[gauge] at (3,1.2) {};
	\node[invgauge] at (3,1.5) {};

\end{tikzpicture}
\quad
=
\quad
\begin{tikzpicture}[baseline=3.5cm]
	\draw (0,1)--(0,5.5);
	\draw (1.5,1)--(1.5,5.5);
	\draw (3,1)--(3,5.5);
	\filldraw[fill=white, thick, rounded corners = 4mm] (-.5,4) rectangle (3.5,5);
	\node at (1.5,4.5) {$Y$};
	
	\draw (-1,5.5) -- (-1,3.5) .. controls (-1,2.9) .. (0,2.6) -- (3,2) .. controls (4,1.7) .. (4,1);	

	\node[conn, minimum size=7mm] at (0,2.6) {\tiny$K$};
	\node[conn, minimum size=7mm] at (1.5,2.3) {\tiny$K^{{\hbox{-}\!}1}$};
	\node[conn, minimum size=7mm] at (3,2) {\tiny$K$};
	
	\node[invgauge] at (-1, 5.3) {};	
	\node[invgauge] at (0, 5.3) {};
	\node[gauge] at (1.5, 5.3) {};
	\node[invgauge] at (3, 5.3) {};
	\node[invgauge] at (0, 1.9) {};
	\node[gauge] at (1.5, 1.6) {};
	\node[invgauge] at (3, 1.3) {};
	\node[invgauge] at (4, 1.3) {};
	
	\node[invgauge] at (.6,2.48) {};
	\node[gauge] at (.9,2.42) {};
	
	\node[gauge] at (2.1,2.18) {};
	\node[invgauge] at (2.4,2.12) {};
	
	\node[gauge] at (0,3.8) {};
	\node[invgauge] at (0,3.5) {};

	\node[invgauge] at (1.5,3.8) {};
	\node[gauge] at (1.5,3.5) {};

	\node[gauge] at (3,3.8) {};
	\node[invgauge] at (3,3.5) {};

\end{tikzpicture}
$$
\end{proof}

In fact, this argument easily shows that two gauge equivalent connections give isomorphic planar algebras; $\operatorname{Alt}(g)$ provides the isomorphism between the corresponding flat elements. Conversely, if two connections give the same planar algebra, they must be gauge equivalent, although we will not need this here.

\section{Classification of 1-supertransitive subfactors with index in the range $(5,3+\sqrt{5})$.}
 \label{1STclassification}

Our main result in this section is

\begin{thm}\label{thm:only1ST}
The only 1-supertransitive subfactors with index in the range $(5, 3+\sqrt{5})$ have principal graph $\left(\bigraph{bwd1v1p1v1x0p1x1duals1v1x2}, \bigraph{bwd1v1p1v1x0p1x1duals1v1x2}\right)$.
\end{thm}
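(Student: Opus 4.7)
Let $P$ be a $1$-supertransitive subfactor planar algebra with loop parameter $\delta$ satisfying $\delta^2 \in (5, 3+\sqrt{5})$. Its principal graph begins $\star\,\text{---}\,v_1\,\text{---}\,\{w_1,\dots,w_k\}$, with the edge $v_1w_i$ of multiplicity $m_i\geq 1$. Writing $d_v$ for the Frobenius--Perron dimension, $d_{v_1}=\delta$ and the eigenvector equation at $v_1$ reads $\delta^2-1=\sum_i m_i d_{w_i}$, while the corresponding equation at each $w_i$ forces $d_{w_i}\geq m_i$. Combining these gives $\sum_i m_i^2 \leq \delta^2-1 < 2+\sqrt{5}$, leaving only the branching signatures $(1), (2), (1,1), (1,1,1)$, and $(1,1,1,1)$ at $v_1$, and tightly constraining each $d_{w_i}$.

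The signature $(1)$ yields an at-least-$2$-supertransitive subfactor; iterating the same dimension analysis one depth further (or directly invoking existing restrictions on $A_n$-type tails) eliminates this in our index window. For each of the remaining four signatures, I would feed the initial graph segment---paired with every compatible initial segment of a dual graph, itself subject to the same constraints---into the odometer routine of the \texttt{FusionAtlas} package. The odometer grows pairs of principal graphs depth by depth, enforcing graph norm equal to $\delta$, Frobenius reciprocity between the two graphs, consistent dual data, and the usual local eigenvector inequalities, and returns only finitely many candidate pairs.

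Each non-$\Gamma(\cA)$ candidate is then excluded by one of three obstructions: (i) the Frobenius--Perron norm of the extended graph exits $(5,3+\sqrt{5})$ at some finite depth; (ii) a small-dimensional depth-$2$ vertex forces an intermediate subfactor, but in our interval no intermediate index is arithmetically compatible with $\delta^2$ (this is the intermediate-subfactor argument advertised in the introduction); or (iii) the would-be Frobenius--Perron weights do not admit a consistent assignment in the number field $\mathbb{Q}(\delta^2)$, because the corresponding eigenvector system has no totally positive solution. Only the $(1,1)$ signature survives, and it extends uniquely to $\Gamma(\cA)$.

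The main obstacle is that the odometer sees only finite-depth candidates, so an infinite-depth $1$-supertransitive subfactor in this range must be ruled out separately: one checks that any infinite $1$-supertransitive bipartite graph has squared norm bounded away from $(5, 3+\sqrt{5})$ (either the tail is eventually trivial, making the graph finite depth, or the branching recurs and the norm grows above $3+\sqrt{5}$). Granted the finite-depth reduction, the whole argument can be carried out by hand via the dimension and intermediate-subfactor analysis above, bypassing the odometer; this is the by-hand version mentioned in the introduction. The delicate part in either version is the $(1,1,1)$ and $(1,1,1,1)$ cases, where several candidate extensions narrowly lie near the boundary of the allowed index window and must be killed individually.
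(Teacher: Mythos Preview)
Your overall architecture---dimension bounds on the depth-$2$ branching, then odometer, then kill remaining candidates---matches the paper's. But two points in your plan are genuine gaps.

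First, your treatment of the infinite-depth case is wrong. You write that ``any infinite $1$-supertransitive bipartite graph has squared norm bounded away from $(5,3+\sqrt{5})$'', but this is false: $A_\infty$ itself realizes every index above $4$, and there is no a~priori reason an infinite-depth $1$-supertransitive graph cannot have norm in this interval. The odometer does \emph{not} only see finite-depth candidates; it produces a finite list of \emph{weeds}, each of which may in principle be the initial segment of an infinite-depth principal graph. The paper runs the odometer to a tree of such weeds (Figure~\ref{fig:odometer}) and then kills each one---including potential infinite-depth continuations---by showing some vertex already present has an impossible dimension. The hardest instance is the weed of shape $\bigraph{gbg1v1p1v1x1v1v1}$, where one computes that the last two dimensions cannot simultaneously lie in the allowed set $\{1\}\cup\{2\cos(\pi/n)\}\cup[2,\infty)$.

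Second, you are missing the engine that does most of the work: Lemma~\ref{lem:depth2dim2}, which says that any subfactor in this index range with a depth-$2$ object of dimension in $(1,2)$ already has principal graph $\Gamma(\cA)$. This is proved by an intricate by-hand analysis (the depth-$2$ object generates an ADET subcategory, forcing the index to be exactly $d^2\approx5.049$, and then one enumerates possible $4$-partite extensions). The paper uses this lemma twice: once to dispose of the $(1,1,1)$ weeds immediately (at least one depth-$2$ vertex has dimension $<2$), and once to finish off the surviving $(1,1)$ weed. Your obstruction~(iii) about number-field consistency is not what is actually used; the real obstructions are Lemma~\ref{lem:depth2dim2}, the intermediate-subfactor argument (Lemma~\ref{intermediate}, applied to the list in Figure~\ref{fig:ignore}), and direct dimension contradictions on the remaining red weeds.
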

The proof uses a small amount of computer enumeration of possible principal graphs, using the {\tt FusionAtlas`} software described in \cite{index5-part1}, although this could easily be replaced by tedious hand calculations. 
We separately prove the same statement with an additional hypothesis of finite depth as Proposition \ref{1STfinitedepth}, because we think the proof is interesting.

In fact, we think a much stronger statement holds
\begin{conj}
\label{conj}
Any subfactor with index in the range $(5, 3+\sqrt{5})$ has principal graph $A_\infty$,  $\left(\bigraph{bwd1v1p1v1x0p1x1duals1v1x2}, \bigraph{bwd1v1p1v1x0p1x1duals1v1x2}\right)$ or $\left(\bigraph{bwd1v1v1p1v1x0p0x1p0x1v0x1x0p1x0x1duals1v1v2x1x3}, \bigraph{bwd1v1v1p1v1x0p0x1p0x1v0x1x0p1x0x1duals1v1v2x1x3}\right)$.
\end{conj}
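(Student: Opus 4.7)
The plan is to extend the supertransitivity-by-supertransitivity approach used for Theorem \ref{thm:only1ST} to arbitrary supertransitivity $k$. First I would push the odometer from \texttt{FusionAtlas`} further, refining the computer evidence mentioned in the introduction that forces any hypothetical new subfactor to have rank at least $38$, and using this to produce, for each $k \geq 2$, a finite list of candidate principal graph pairs with index in $(5, 3+\sqrt{5})$ that are $k$-supertransitive but not $(k{+}1)$-supertransitive. The $A_\infty$ case is the limit of this process, corresponding to infinite supertransitivity, and must be kept as a genuine possibility. I would hope that the combination of the index upper bound, Frobenius--Perron dimension constraints at each branch point, and the `triple point obstruction'-style arguments reduces the $k$-supertransitive list to a manageable set of candidates for each fixed $k$.

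For every finite candidate graph $\Gamma$ produced by this enumeration, the approach of \S \ref{connections}--\S \ref{flatlowweight} applies directly. First, use the eigenvalue condition from \cite[Theorem 1.7]{index5-part3} and the gauge group action (via Lemma \ref{lem:flat-elements-transform}) to cut down the number of gauge equivalence classes of flat bi-unitary connections on $\Gamma$. Then search inside the lopsided graph planar algebra $\cG^{\text{lopsided}}(\Gamma)$ for flat low-weight elements in the $(k{+}1)$-box space. By Fact \ref{fact:flat-subalg} and Theorem \ref{thm:spa-and-flat-conns}, the outcome is dichotomous: either no such flat element exists (and $\Gamma$ is ruled out), or one produces a subfactor planar algebra of the expected supertransitivity, which by the classification one level deeper must coincide with a known subfactor or contradict the list of admissible principal graphs at lower $k$. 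Running this loop in increasing $k$, combined with the uniqueness results already proved for $\Gamma(\cA)$ and $\Gamma(\cB)$, would close off all but the $A_\infty$ family.

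The hard part, and the reason this is only stated as a conjecture, is twofold. First, the combinatorial explosion of candidate graphs past supertransitivity $1$ is severe, especially once quadruple or higher branches appear, where the graph-level obstructions used below index $5$ all fail; even producing a finite candidate list for each $k$ requires either major new obstructions or a heroic odometer run. Second, and more serious, is the infinite-depth non-$A_\infty$ case: ruling these out cannot be done purely by rank bounds, and one needs either a global argument (along the lines of Popa's rigidity results or a direct flatness obstruction that sees the entire tail of the graph) or strong number-theoretic constraints on the tower of dimensions that hold at arbitrary depth. Without such a tool the planar-algebraic method I have sketched attacks only the finite-depth stratum, which on its own is insufficient; I expect that completing the proof will require new input well beyond what appears in this paper.
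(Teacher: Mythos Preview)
This statement is a conjecture, not a theorem, and the paper explicitly says it \emph{cannot} prove it at present. There is no proof in the paper to compare your proposal against; the authors merely offer the supporting Theorem \ref{thm:conj} (rank at least 38, by computer) as evidence, without details. Your proposal correctly recognises this and honestly describes the two genuine obstacles --- the combinatorial explosion at higher supertransitivity with quadruple or higher branches, and the infinite-depth non-$A_\infty$ case --- which are precisely the reasons the paper leaves the statement open. So there is nothing to correct: you have accurately identified that no proof exists and sketched a plausible but currently infeasible programme.
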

Computer evidence from the {\tt FusionAtlas`} enables us to prove the following theorem in support of this conjecture, but we won't give the details here.
\begin{thm}
\label{thm:conj}
Any subfactor with index in range $(5, 3+\sqrt{5})$ either appears in Conjecture \ref{conj} or has rank at least 38.
\end{thm}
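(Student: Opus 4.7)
The plan is to use the \texttt{FusionAtlas`} odometer to enumerate all pairs of principal graphs $(\Gamma, \Gamma')$ of rank at most $37$ whose graph norm squared lies in the open interval $(5, 3+\sqrt{5})$, and then to verify that the only survivors are $\Gamma(\cA)$ and $\Gamma(\cB)$. The odometer builds candidate graphs one vertex at a time, at each stage imposing associativity and Frobenius reciprocity with respect to the vertex added at the previous level, and pruning any branch whose current Perron--Frobenius eigenvalue squared already lies outside $(5, 3+\sqrt{5})$ or whose completion is forced to leave that window. Since any infinite-depth subfactor automatically has infinite rank, the theorem reduces to a finite-depth statement, and for each rank $\leq 37$ and index $< 3+\sqrt{5}$ the set of candidate pairs is a priori finite.

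I would stratify the search by the supertransitivity $k$. The case $k=0$ would force a double edge at the initial vertex, impossible for an irreducible subfactor; $k=1$ is already handled by Theorem~\ref{thm:only1ST}. For $k \geq 2$ one generates all ways of branching after an initial $A_{k+1}$ segment and then iteratively extends, applying at each step the dimension row-sum bound against the index, the associativity test on fusion matrices, the dual data compatibility between $\Gamma$ and $\Gamma'$, and triple- and quadruple-point obstructions in the spirit of \cite[Theorem~1.7]{index5-part3}. Collectively these kill the vast majority of candidates within the first few depths. The lopsided framework of \S 1 is essential here: the relevant exact arithmetic can be performed over $\mathbb{Q}(d^2)$ rather than over the much larger field one would otherwise need, making the positivity and associativity checks efficient enough to push the depth bound out to where rank $37$ is reached.

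The main obstacle is controlling the combinatorial explosion long enough to reach rank $37$: once branching begins, the number of candidate shapes multiplies rapidly, and certified exact arithmetic is essential to avoid spurious survivors. I expect that the narrowness of the interval $(5, 3+\sqrt{5})$ (width less than $0.24$) forces nearly every intermediate graph to overshoot one endpoint quickly, so that the only long-lived survivors belong to recognizable $\su{2}$- or $\su{3}$-quantum-group families, which must then be matched individually against the index bound. A secondary subtlety is that the odometer only certifies nonexistence of \emph{graph pairs}; any graph pair that survives all combinatorial obstructions must additionally be ruled out either by checking for the nonexistence of a bi-unitary connection (via the eigenvalue test of \cite[Theorem~1.7]{index5-part3}) or by matching against known quantum-group constructions. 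Proving rigorously that no exotic graph slips through all the way out to rank $37$, rather than some smaller bound, is where the real computational effort lies.
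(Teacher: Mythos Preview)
The paper does not actually give a proof of this theorem: it states only that ``Computer evidence from the {\tt FusionAtlas`} enables us to prove the following theorem in support of this conjecture, but we won't give the details here.'' Your plan to run the odometer out until every surviving weed has at least $38$ vertices is exactly the intended approach, and your observation that infinite-depth subfactors have infinite rank (so the statement is vacuous there) is correct.

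That said, your description conflates several distinct tools. The odometer itself is a purely combinatorial enumeration of principal graph \emph{pairs}, growing them depth by depth and pruning using (i) the graph norm bound against $3+\sqrt{5}$, (ii) the Ocneanu path-counting compatibility between $\Gamma$ and $\Gamma'$, and (iii) obstructions like the triple-point obstruction. It does not impose ``associativity'' in the fusion-category sense, does not build fusion matrices, and does not use the lopsided graph planar algebra at all---that machinery from \S 1 is for computing connections and flat elements, which is irrelevant for Theorem~\ref{thm:conj}. Similarly, you cannot prune a partial graph for having norm below $\sqrt{5}$, since extending a graph only increases its norm; the lower index bound only eliminates \emph{completed} finite graphs. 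Finally, no connection or bi-unitarity checks are needed here: the claim is only about principal graphs, not about realizability, so once the odometer shows every weed has $\geq 38$ vertices you are done.
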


To establish Theorem \ref{thm:only1ST} we need a few preliminary results.
The following lemma is stated in \cite{index5-part1} and was known to experts well before then.  It probably follows from \cite{MR1262294}; we find the following proof more direct.  

\begin{lem}\label{intermediate}
A subfactor $N\subset M$ in which  some, but not all, depth-two projections have dimension 1 has a proper intermediate subfactor.   
\end{lem}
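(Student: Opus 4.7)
The plan is to exhibit an intermediate subfactor via the Bisch--Watatani correspondence \cite{MR1262294} between intermediate subfactors $N \subset P \subset M$ and biprojections in $N' \cap M_1$. Recall that the minimal central projections of $N' \cap M_1$ correspond to the irreducible $N$-$N$ bimodule summands of $L^2(M) \otimes_N L^2(M)$: the Jones projection $e_1$ corresponds to the trivial summand (present because $N \subset M$ is irreducible), and the remaining ones are the depth-two projections. Those of dimension $1$ correspond to \emph{invertible} $N$-$N$ bimodule summands.

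The natural candidate biprojection is
\[
p \;=\; e_1 + \sum_{i} q_i,
\]
where $q_1,\dots,q_k$ are the depth-two projections of dimension $1$. The hypothesis gives $k\ge 1$, so $p > e_1$, while the existence of depth-two projections of dimension $>1$ gives $p < 1$. Thus the intermediate subfactor $P$ that $p$ produces must satisfy $N \subsetneq P \subsetneq M$, as desired.

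The bulk of the work, and the main obstacle, is verifying that $p$ really is a biprojection. The key structural input is that the invertible $N$-$N$ bimodules appearing as summands of $M \otimes_N M$ are closed under tensor product and duality, i.e.\ they form a finite subgroup $G$ of the Picard group of the fusion category generated by $M$. Closure follows because a tensor product of invertibles is invertible, and because any invertible summand of $(M \otimes_N M)^{\otimes 2}$ in fact already appears in $M \otimes_N M$ (one can verify this by Frobenius reciprocity, using that an invertible bimodule $\alpha$ appears in $M \otimes_N M$ iff $\alpha \otimes_N M \cong M$ as $N$-$N$ bimodules, which is a condition closed under products). This group-like structure of the dimension-$1$ summands, together with the trivial summand, translates directly into the equation $p e_2 p = c\, p$ (for an appropriate constant $c$) characterising biprojections in the sense of Bisch, or equivalently to $p$ being stable under the Fourier transform up to scalar.

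Once $p$ is known to be a biprojection, Bisch's theorem produces the required intermediate subfactor $N \subsetneq P \subsetneq M$, completing the proof. The approach sidesteps the general classification of sub-Q-systems by isolating a combinatorially obvious candidate and then using only the abstract "invertibles form a group" fact together with the standard biprojection dictionary.
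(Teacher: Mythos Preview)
Your argument is essentially the paper's: both isolate the group of invertible $N$-$N$ bimodules appearing at depth two and sum them (together with the identity) to produce the intermediate object, with the paper phrasing this as a sub-algebra-object of $M$ while you phrase it as a Bisch biprojection in $N'\cap M_1$ --- these are equivalent pictures, and indeed the paper itself remarks that the lemma ``probably follows from'' Bisch's biprojection work before giving its algebra-object version. One slip to fix: the minimal central projections of $N'\cap M_1$ correspond to irreducible $N$-$N$ summands of $L^2(M)$ itself (i.e.\ of $X\otimes\bar X$), not of $L^2(M)\otimes_N L^2(M)$; since your actual construction $p=e_1+\sum q_i$ uses only the depth-two projections, this does not affect the argument.
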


\begin{proof}
This proof is written in the language of algebra objects for tensor categories; a dictionary between subfactors and algebra objects is given in \cite[\S 2]{1102.2631}.

$M$ is an algebra object in the category of $N-N$ bimodules; it is isomorphic to $ \sum_{p \in P_4} p$, where $P_4$ is the set of irreducible summands of $M$ as an $N-N$ bimodule (ie, vertices at depth zero and two in the principal graph).  

Let $P'_4$ be the subset of $P_4$ consisting of objects of dimension 1.  The objects in $P'_4$ form a group under tensor product, as the tensor product of two projections is again a projection, and dimension is multiplicative under tensor product.  

Thus, $ \sum_{p \in P'_4} p$ is also an algebra object, and as it is intermediate between $N \simeq 1$ and $M \simeq  \sum_{p \in P_4} p$, it corresponds to an intermediate subfactor for $N \subset M$.  
\end{proof}

\begin{lem}\label{lem:depth2dim2}
A subfactor which has index in $(5,3+\sqrt{5})$ and an object at depth two with dimension in $(1,2)$ has principal graph $A=\bigraph{gbg1v1p1v1x1p0x1}$.
\end{lem}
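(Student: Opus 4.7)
The plan is to combine the quantization of subfactor bimodule dimensions below $2$, the Frobenius--Perron equations at low depths, and the index bound $d^2 \in (5, 3+\sqrt{5})$. Let $p$ denote the depth-$2$ object with $\dim(p) = \alpha \in (1,2)$, and let $d$ be the dimension of the unique depth-$1$ vertex (unique because the subfactor is irreducible). Since $\alpha<2$ is a subfactor dimension, it must take the form $2\cos(\pi/n)$ for some integer $n\geq 4$, so $\alpha$ lies in the discrete set $\{\sqrt{2},\,\phi,\,\sqrt{3},\,2\cos(\pi/7),\,2\cos(\pi/8),\ldots\}$, accumulating to $2$.

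The core calculation is the Frobenius--Perron equation at $p$. Because $\alpha < 2 \le d$, the multiplicity of edges from $p$ to the depth-$1$ vertex is forced to equal $1$, giving $d(\alpha-1) = \Sigma$, where $\Sigma$ is the sum (with multiplicity) of dimensions of the depth-$3$ neighbours of $p$. Using $d \in (\sqrt{5}, \sqrt{3+\sqrt{5}})$ and the useful identity $\sqrt{3+\sqrt{5}} = \sqrt{2}\,\phi$, I would compute the interval for $\Sigma$ at each candidate $\alpha$ and compare with sums of admissible dimensions (each $\geq 1$, below $2$ quantized). The three smallest candidates fall quickly: $\alpha = \sqrt{2}$ yields $\Sigma < 1$, impossible; $\alpha = \phi$ gives $\Sigma \in ((5-\sqrt{5})/2,\, \sqrt{2})$, in which the only candidate dimension $\sqrt{2}$ sits at the (excluded) upper endpoint, forcing $d^2 = 2\phi^2 = 3+\sqrt{5}$ exactly; $\alpha = \sqrt{3}$ yields $\Sigma \in (1.637, 1.675)$, a gap containing no admissible dimension. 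The critical case is $\alpha = 2\cos(\pi/7)$: here $\Sigma = \alpha$, realised by a single depth-$3$ neighbour of dimension $2\cos(\pi/7)$, and the relation $d(\alpha-1)=\alpha$ pins down $d$ as the real root of $d^3-2d^2-d+1=0$, giving $d^2 \approx 5.049$, safely inside the interval.

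Having fixed $\alpha = 2\cos(\pi/7)$ together with its depth-$3$ neighbour, I would propagate outward. The Frobenius--Perron equation at the depth-$1$ vertex gives $\sum_{v\text{ at depth }2}\dim(v) = d^2-1$, so there must be a second depth-$2$ vertex $q$ of dimension $d^2-1-\alpha$; a short calculation using the cubic for $d$ shows this equals $d$. The Frobenius--Perron equation at $q$ then forces its depth-$3$ neighbours to sum to $d^2-d$, uniquely realised as the existing depth-$3$ vertex of dimension $2\cos(\pi/7)$ plus a new leaf of dimension $1$. Verifying the equations at the two depth-$3$ vertices closes the graph with no depth-$4$ vertices, producing exactly the graph $A$.

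The main obstacle is the tail of candidates $\alpha = 2\cos(\pi/n)$ with $n \geq 8$: here the admissible dimensions accumulate near $2$ and the crude interval bounds above no longer immediately eliminate all possibilities. Each such $n$ determines $d$ as the root of a specific polynomial, and one must show that every such $d$ either leaves the index interval or fails to extend consistently via the Frobenius--Perron equations at the other depth-$2$ vertex and at depth $3$. This is precisely the enumeration that the {\tt FusionAtlas`} odometer performs in the paper; a fully by-hand proof would require patient joint analysis combining sharper numerical bounds with Galois-theoretic constraints on $d$ to eliminate each near-miss configuration.
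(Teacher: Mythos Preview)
Your case analysis has a genuine gap at the tail $n \geq 8$, and your attribution of that gap to the odometer is mistaken: the paper proves this lemma entirely by hand; the odometer is invoked only later, in the proof of Theorem~\ref{thm:only1ST}.

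The idea you are missing is to study not the Frobenius--Perron equation at $Z$ inside the principal graph of $X$, but the action of $-\otimes Z$ on the $N$--$M$ bimodules. Since $Z$ is a summand of $X \otimes \bar X$, Frobenius reciprocity puts $X$ inside $X \otimes Z$, so the fusion graph for $-\otimes Z$ has a self-loop at $X$; its Frobenius--Perron eigenvalue is $\dim Z < 2$, and the only graphs of norm $<2$ carrying a loop are the tadpoles $T_n$. Hence the component of $X$ is a $T_n$ with $X$ at the looped end. If $X_0$ is the far endpoint, then $\dim X / \dim X_0 = [n]_q$ at $q=\exp\!\bigl(i\pi/(2n+1)\bigr)$, with $\dim X_0$ itself a quantized subfactor dimension. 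For $n\geq 4$ this already forces the index above $8.29$, disposing of your entire problematic tail in one stroke; it also pins $\dim Z$ to the odd-denominator values $2\cos\!\bigl(\pi/(2n+1)\bigr)$, so the cases $\alpha=\sqrt 2,\ \sqrt 3,\ 2\cos(\pi/8),\ldots$ that you eliminate one by one never arise in the first place.

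The paper then handles $n=2$ and $n=3$ individually. Your $\alpha=2\cos(\pi/7)$ is the paper's $n=3$, and that case occupies most of the proof---it needs substantially more than your sketch. After the dimensions are pinned down, a competing family of principal-graph shapes must still be excluded, which requires the triple-point obstruction, an intermediate-subfactor argument (via Lemma~\ref{intermediate}), and finally a path-counting contradiction on the $4$-partite graph.
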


\begin{proof}
Let $X$ be the generating object for this subfactor, and $Z$ be the object at depth $2$ with  $1 < \dim{Z} < 2$.  Then $Z$ generates an ADET fusion category.  Since $X \in X \tensor Z$, the principal graph for $- \tensor Z$ must contain a $T_n$, with $X$ the last vertex.  If $X_0$ is the first vertex in the $T_n$, we have $\dim{X}/ \dim{X_0} = [n]_{q=\zeta_{4n+2}}$.

{\em Case 2:} If $n=2$ and if $\dim X_0 = 1$, then $\dim{X} < 2$, so we were below index 4. But if $\dim X_0 \geq \sqrt{2}$, then the index is at least $3+\sqrt{5}$.

{\em Case 3:} If $n=3$, then the only possible dimensions are $\dim{X_0}=1$, and $\dim{X} \approx 2.247$ is the largest root of $x^3 - 2 x^2 - x + 1$.  We want to know what the $X\otimes -$ principal graph is.  

The dimensions of the objects at depth two in the  $X\otimes -$ principal graph are all at least $\sqrt{2}$, and because the index is $\dim{X}^2 \approx 5.049$, there are exactly two depth-two vertices in the principal graph.  One is $Z$; call the other $Y$.  We know $\dim{Z}$ is the graph norm of $T_3$, and then $\dim{Y}$ must be $\dim{X}$.  We abbreviate these as
\begin{align*}
d & = \dim{X} = \dim{Y} \approx 2.247; \\
e & = \dim{Z} = d^2-d-1 \approx 1.802.
\end{align*}

In the following illustrations of the forms of principal graphs, filled-in circles are vertices whose neighbors are all known, while open circles are vertices which may connect to other vertices at the next depth.

 The triple-point obstruction tells us that if our principal graph were of the first form below, then our dual principal graph would be of the second form:
$$
\left(
\begin{tikzpicture}[baseline=0]
	\draw (0,0)--(1,0)--(2,-.5)--(3,-.5);
	\draw (1,0)--(2,.5)--(3,.5);

	\draw[fill=black] (0,0) circle (1mm);
	\draw[fill=black] (1,0) circle (1mm);
	\draw[fill=black] (2,-.5) circle (1mm);
	\draw[fill=black] (2,.5) circle (1mm);
	\draw[fill=white] (3,-.5) circle (1mm);
	\draw[fill=white] (3,.5) circle (1mm);
\end{tikzpicture}
\quad , \quad
\begin{tikzpicture}[baseline=0]
	\draw (0,0)--(1,0)--(2,-.5)--(3,-.5);
	\draw (1,0)--(2,.5);
	\draw (2,-.5)--(3,.5);

	\draw[fill=black] (0,0) circle (1mm);
	\draw[fill=black] (1,0) circle (1mm);
	\draw[fill=black] (2,-.5) circle (1mm);
	\draw[fill=black] (2,.5) circle (1mm);
	\draw[fill=white] (3,-.5) circle (1mm);
	\draw[fill=white] (3,.5) circle (1mm);
\end{tikzpicture}.
\right)
$$
But the univalent vertex at depth two has dimension 1, and  we get a contradiction because this subfactor cannot have an intermediate subfactor (as the index does not factor into allowed index values).  

Thus, our principal graph is either of the form
$$
\begin{tikzpicture}[baseline=0]
	\draw (0,0)--(1,0)--(2,-.5)--(3,0);
	\draw (1,0)--(2,.5)--(3,0);

	\draw[fill=black] (0,0) circle (1mm);
	\node at (0,0) [below left] {$1$};
	\draw[fill=black] (1,0) circle (1mm);
	\node at (1,0) [below left] {$X$};
	\draw[fill=white] (2,-.5) circle (1mm);
	\node at (2,-.5) [below left] {$Z$};
	\draw[fill=white] (2,.5) circle (1mm);	
	\node at (2,.5) [above left] {$Y$};
	\draw[fill=white] (3,0) circle (1mm);
\end{tikzpicture} \quad ,
\quad \text{or} \quad
\begin{tikzpicture}[baseline=0]
	\draw (0,0)--(1,0)--(2,-.5)--(3,-.5);
	\draw (2,.5)--(3,0);
	\draw (1,0)--(2,.5)--(3,.5);
	\draw[fill=black] (0,0) circle (1mm);
	\node at (0,0) [below left] {$1$};
	\draw[fill=black] (1,0) circle (1mm);
	\node at (1,0) [below left] {$X$};
	\draw[fill=white] (2,-.5) circle (1mm);
	\node at (2,-.5) [below left] {$Z$};
	\draw[fill=white] (2,.5) circle (1mm);
	\node at (2,.5) [above left] {$Y$};
	\draw[fill=white] (3,-.5) circle (1mm);
	\draw[fill=white] (3,0) circle (1mm);
	\draw[fill=white] (3,.5) circle (1mm);
\end{tikzpicture} \quad .
$$

In both cases, note that $\dim{Z} \dim{X} - \dim{X} = de-d = e < 2$, so $Z$ must have degree two.  In the first case, dimensions imply $Y$ also connects to a vertex of dimension 1.  In the second case, $\dim{Y} \dim{X} - \dim{X} = d^2-d \approx 2.802$ can only be partitioned into a sum of allowable dimensions as $e +1$, whence $Y$ connects to three other vertices:  $X$, a vertex of dimension $e$ and a vertex of dimension 1.  These stronger restrictions say our graph is actually of the form
$$
\begin{tikzpicture}[baseline=0]
	\draw (0,0)--(1,0)--(2,-.5)--(3,0);
	\draw (1,0)--(2,.5)--(3,0);
	\draw (2,.5)--(3,.5);

	\draw[fill=black] (0,0) circle (1mm);
	\node at (0,0) [below left] {$1$};
	\node at (0,0) [above left, blue] {$1$};
	\draw[fill=black] (1,0) circle (1mm);
	\node at (1,0) [below left] {$X$};
	\node at (1,0) [above left, blue] {$d$};
	\draw[fill=black] (2,-.5) circle (1mm);
	\node at (2,-.5) [below ] {$Z$};
	\node at (2,-.5) [above , blue] {$e$};
	\draw[fill=black] (2,.5) circle (1mm);
	\node at (2,.5) [below ] {$Y$};
	\node at (2,.5) [above right, blue] {$d$};
	\draw[fill=white] (3,0) circle (1mm);
	\node at (3,0) [ right, blue] {$e$};
	\draw[fill=black] (3,.5) circle (1mm);
	\node at (3,.5) [ right, blue] {$1$};

\end{tikzpicture} \quad ,
\quad
\text{or}
\quad
\begin{tikzpicture}[baseline=0]
	\draw (0,0)--(1,0)--(2,-.5)--(3,-.5);
	\draw (2,.5)--(3,0);
	\draw (1,0)--(2,.5)--(3,.5);
	\draw[fill=black] (0,0) circle (1mm);
	\node at (0,0) [below left] {$1$};
	\node at (0,0) [above left, blue] {$1$};
	\draw[fill=black] (1,0) circle (1mm);
	\node at (1,0) [below left] {$X$};
	\node at (1,0) [above left, blue] {$d$};
	\draw[fill=black] (2,-.5) circle (1mm);
	\node at (2,-.5) [below ] {$Z$};
	\node at (2,-.5) [above right, blue] {$e$};
	\draw[fill=black] (2,.5) circle (1mm);
	\node at (2,.5) [below ] {$Y$};
	\node at (2,.5) [above right, blue] {$d$};
	\draw[fill=white] (3,-.5) circle (1mm);
	\node at (3,-.5) [ right, blue] {$e$};
	\draw[fill=white] (3,0) circle (1mm);
	\node at (3,0) [ right, blue] {$e$};
	\draw[fill=white] (3,.5) circle (1mm);
	\node at (3,.5) [ right, blue] {$1$};
\end{tikzpicture} \quad .
$$

In the first case, the graph 
$$
\begin{tikzpicture}[baseline=0]
	\draw (0,0)--(1,0)--(2,-.5)--(3,0);
	\draw (1,0)--(2,.5)--(3,0);
	\draw (2,.5)--(3,.5);

	\draw[fill=black] (0,0) circle (1mm);
	\node at (0,0) [below left] {$1$};
	\draw[fill=black] (1,0) circle (1mm);
	\node at (1,0) [below left] {$X$};
	\draw[fill=black] (2,-.5) circle (1mm);
	\node at (2,-.5) [below left] {$Z$};
	\draw[fill=black] (2,.5) circle (1mm);
	\node at (2,.5) [above left] {$Y$};
	\draw[fill=black] (3,0) circle (1mm);
	\draw[fill=black] (3,.5) circle (1mm);

\end{tikzpicture}
$$
has the correct norm, and is therefore is the only possible graph of this form.  

It remains to rule out  graphs of the first form. To do so, we need to move from considering bigraphs to considering pairs of bigraphs with dual data.  First, name the vertices at depth three $A$, $B$ and $C$:
$$
\begin{tikzpicture}[baseline=0]
	\draw (0,0)--(1,0)--(2,-.5)--(3,-.5);
	\draw (2,.5)--(3,0);
	\draw (1,0)--(2,.5)--(3,.5);
	\draw[fill=black] (0,0) circle (1mm);
	\node at (0,0) [below left] {$1$};
	\node at (0,0) [above left, blue] {$1$};
	\draw[fill=black] (1,0) circle (1mm);
	\node at (1,0) [below left] {$X$};
	\node at (1,0) [above left, blue] {$d$};
	\draw[fill=black] (2,-.5) circle (1mm);
	\node at (2,-.5) [below ] {$Z$};
	\node at (2,-.5) [above right, blue] {$e$};
	\draw[fill=black] (2,.5) circle (1mm);
	\node at (2,.5) [below ] {$Y$};
	\node at (2,.5) [above right, blue] {$d$};
	\draw[fill=white] (3,-.5) circle (1mm);
	\node at (3,-.5) [ right, blue] {$e$};
	\draw[fill=white] (3,0) circle (1mm);
	\node at (3,0) [ right, blue] {$e$};
	\draw[fill=white] (3,.5) circle (1mm);
	\node at (3,.5) [ right, blue] {$1$};
	\node at (3,-.5) [below] {$C$};
	\node at (3,0) [left] {$B$};
	\node at (3,.5) [above] {$A$};

\end{tikzpicture} \quad .
$$

 Standard path-counting arguments show that the `other graph' has the same form up to depth two; call its  vertices at depth two $\hat{Y}$ and $\hat{Z}$.  The depth-three vertices $A$, $B$ and $B$ must each connect to $\hat{Y}$ or $\hat{Z}$.  Path counting implies that each of these three vertices connects to only one of $\hat{Y}$ or $\hat{Z}$; further neither $\hat{Y}$ nor $\hat{Z}$ can be univalent (or else there would be an intermediate subfactor).  So one of the depth-two vertices has degree two, and the other has degree three.  Without loss of generality say $\hat{Y}$ is the degree-three vertex.  Then $A$ must connect to $\hat{Y}$ (because otherwise $\dim{Z} = \frac{d+1}{d}$ is not an allowed dimension).  Thus, $Y$ and $Z$, and $\hat{Y}$ and $\hat{Z}$, are self-dual (since the two vertices in each pair have distinct norms).

 We've just shown that the two possibilities for the 4-partite principal graph are 
$$
\begin{tikzpicture}[inner sep=.7mm, xscale=1.5]
\begin{scope}[xshift=-4mm]
\node at (-1,0) {$1$};
\node at (-1,2) {$\hat{1}$};
\node at (-1,4) {$1$};
\node at (0,1) {$\bar{X}$};
\node at (0,3) {$X$};
\node at (1,0) {$Z$};
\node at (1,2) {$\hat{Z}$};
\node at (1,4) {$Z$};
\node at (2,0) {$Y$};
\node at (2,2) {$\hat{Y}$};
\node at (2,4) {$Y$};
\node at (3,1) {$\bar{C}$};
\node at (3,3) {$C$};
\node at (4,1) {$\bar{B}$};
\node at (4,3) {$B$};
\node at (5,1) {$\bar{A}$};
\node at (5,3) {$A$};
\end{scope}

\node(1) at (-1,0) [circle,fill] {};
\node(h1) at (-1,2)[circle,fill] {} ;
\node(11) at (-1,4)[circle,fill] {};
\node(bX) at (0,1) [circle,fill]{};
\node(X) at (0,3) [circle,fill]{};
\node(Z) at (1,0) [circle,fill]{};
\node(hZ) at (1,2) [circle,fill]{};
\node(ZZ) at (1,4)[circle,fill] {};
\node(Y) at (2,0) [circle,fill]{};
\node(hY) at (2,2) [circle,fill]{};
\node(YY) at (2,4) [circle,fill]{};
\node(bC) at (3,1) [circle, draw, fill=white]{};
\node(C) at (3,3) [circle, draw, fill=white]{};
\node(bB) at (4,1)[circle,draw, fill=white] {};
\node(B) at (4,3)[circle,draw, fill=white] {};
\node(bA) at (5,1)[circle, draw, fill=white] {};
\node(A) at (5,3)[circle, draw, fill=white] {};

\draw (11)--(X)--(ZZ)--(C);
\draw (X)--(YY)--(B);
\draw (YY)--(A);
\draw (h1)--(X)--(hZ)--(C);
\draw (X)--(hY)--(B);
\draw (hY)--(A);
\draw (h1)--(bX)--(hZ)--(bC);
\draw (bX)--(hY)--(bB);
\draw (hY)--(bA);
\draw (1)--(bX)--(Z)--(bC);
\draw (bX)--(Y)--(bB);
\draw (Y)--(bA);
\end{tikzpicture}
$$
 and 
$$
\begin{tikzpicture}[inner sep=.7mm, xscale=1.5]
\begin{scope}[xshift=-4mm]
\node at (-1,0) {$1$};
\node at (-1,2) {$\hat{1}$};
\node at (-1,4) {$1$};
\node at (0,1) {$\bar{X}$};
\node at (0,3) {$X$};
\node at (1,0) {$Z$};
\node at (1,2) {$\hat{Z}$};
\node at (1,4) {$Z$};
\node at (2,0) {$Y$};
\node at (2,2) {$\hat{Y}$};
\node at (2,4) {$Y$};
\node at (3,1) {$\bar{C}$};
\node at (3,3) {$C$};
\node at (4,1) {$\bar{B}$};
\node at (4,3) {$B$};
\node at (5,1) {$\bar{A}$};
\node at (5,3) {$A$};
\end{scope}

\node(1) at (-1,0) [circle,fill] {};
\node(h1) at (-1,2)[circle,fill] {} ;
\node(11) at (-1,4)[circle,fill] {};
\node(bX) at (0,1) [circle,fill]{};
\node(X) at (0,3) [circle,fill]{};
\node(Z) at (1,0) [circle,fill]{};
\node(hZ) at (1,2) [circle,fill]{};
\node(ZZ) at (1,4)[circle,fill] {};
\node(Y) at (2,0) [circle,fill]{};
\node(hY) at (2,2) [circle,fill]{};
\node(YY) at (2,4) [circle,fill]{};
\node(bC) at (3,1) [circle, draw, fill=white]{};
\node(C) at (3,3) [circle, draw, fill=white]{};
\node(bB) at (4,1)[circle,draw, fill=white] {};
\node(B) at (4,3)[circle,draw, fill=white] {};
\node(bA) at (5,1)[circle, draw, fill=white] {};
\node(A) at (5,3)[circle, draw, fill=white] {};

\draw (11)--(X)--(ZZ)--(C);
\draw (X)--(YY)--(B);
\draw (YY)--(A);
\draw (h1)--(X)--(hZ)--(B);
\draw (X)--(hY)--(C);
\draw (hY)--(A);
\draw (h1)--(bX)--(hZ)--(bB);
\draw (bX)--(hY)--(bC);
\draw (hY)--(bA);
\draw (1)--(bX)--(Z)--(bC);
\draw (bX)--(Y)--(bB);
\draw (Y)--(bA);
\end{tikzpicture}
$$
The first of these cases is ruled out by the triple point obstruction (as stated in \cite{index5-part1}, where it is attributed to \cite{MR1317352} and Ocneanu), applied to the vertices $Y$ and $\hat{Y}$.  The second is ruled out by using dimension data to extend the 4-partite graph up to depth 4; it must have the form
$$
\begin{tikzpicture}[inner sep=.7mm, xscale=1.5]
\begin{scope}[xshift=-4mm]
\node at (-1,0) {$1$};
\node at (-1,2) {$\hat{1}$};
\node at (-1,4) {$1$};
\node at (0,1) {$\bar{X}$};
\node at (0,3) {$X$};
\node at (1,0) {$Z$};
\node at (1,2) {$\hat{Z}$};
\node at (1,4) {$Z$};
\node at (2,0) {$Y$};
\node at (2,2) {$\hat{Y}$};
\node at (2,4) {$Y$};
\node at (3,1) {$\bar{C}$};
\node at (3,3) {$C$};
\node at (4,1) {$\bar{B}$};
\node at (4,3) {$B$};
\node at (5,1) {$\bar{A}$};
\node at (5,3) {$A$};
\node at (6,0) {$D$};
\node at (6,2) {$\hat{D}$};
\node at (6,4) {$D$};
\node at (7,0) {$E$};
\node at (7,2) {$\hat{E}$};
\node at (7,4) {$E$};
\end{scope}

\node(1) at (-1,0) [circle,fill] {};
\node(h1) at (-1,2)[circle,fill] {} ;
\node(11) at (-1,4)[circle,fill] {};
\node(bX) at (0,1) [circle,fill]{};
\node(X) at (0,3) [circle,fill]{};
\node(Z) at (1,0) [circle,fill]{};
\node(hZ) at (1,2) [circle,fill]{};
\node(ZZ) at (1,4)[circle,fill] {};
\node(Y) at (2,0) [circle,fill]{};
\node(hY) at (2,2) [circle,fill]{};
\node(YY) at (2,4) [circle,fill]{};
\node(bC) at (3,1) [circle,fill]{};
\node(C) at (3,3) [circle,fill]{};
\node(bB) at (4,1)[circle,fill] {};
\node(B) at (4,3)[circle,fill] {};
\node(bA) at (5,1)[circle,fill] {};
\node(A) at (5,3)[circle,fill] {};
\node(D) at (6,0) [circle, draw, fill=white]{};
\node(hD) at (6,2) [circle,draw, fill=white]{};
\node(DD) at (6,4) [circle,draw, fill=white]{};
\node(E) at (7,0) [circle,draw, fill=white]{};
\node(hE) at (7,2) [circle,draw, fill=white]{};
\node(EE) at (7,4) [circle,draw, fill=white]{};

\draw (11)--(X)--(ZZ)--(C);
\draw (X)--(YY)--(B);
\draw (YY)--(A);
\draw (h1)--(X)--(hZ)--(B);
\draw (X)--(hY)--(C);
\draw (hY)--(A);
\draw (h1)--(bX)--(hZ)--(bB);
\draw (bX)--(hY)--(bC);
\draw (hY)--(bA);
\draw (1)--(bX)--(Z)--(bC);
\draw (bX)--(Y)--(bB);
\draw (Y)--(bA);
\draw (DD)--(C)--(hE)--(bC)--(D);
\draw (EE)--(B)--(hD)--(bB)--(E);
\end{tikzpicture}
$$
but then note that the number of paths from $A$ to $\bar{C}$ is different if you go down, versus up and around.  Thus this cannot be a principal graph.

{\em Case 4:} If $n \geq 4$, the index is greater than $8.29$.
\end{proof}

\begin{prop}\label{1STfinitedepth}
The only finite-depth $1$-supertransitive subfactor with index in $(5, 3+\sqrt{5})$ has principal graph $A=\bigraph{gbg1v1p1v1x1p0x1}$.
\end{prop}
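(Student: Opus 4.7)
The plan is to reduce to Lemma \ref{lem:depth2dim2}, which handles the case when some depth-$2$ simple has dimension in $(1,2)$. Let $X$ be the generator at depth $1$, set $d = \dim X$ with $d^2 \in (5, 3+\sqrt 5)$, and decompose the depth-$2$ piece as $X \tensor \bar X = 1 \directSum \bigoplus_i m_i Y_i$, where the $Y_i$ are distinct simples of multiplicity $m_i$ and dimension $d_i$. The $1$-supertransitivity hypothesis (as opposed to $2$-supertransitivity, which would require a single $Y$ of multiplicity one) forces $\sum_i m_i \geq 2$, and the index condition reads $\sum_i m_i d_i = d^2 - 1 \in (4, 2+\sqrt 5) \subset (4, 4.236)$. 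If some $d_i \in (1,2)$, Lemma \ref{lem:depth2dim2} immediately identifies the principal graph as $\Gamma(\cA)$ and we are done.

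It remains to show one cannot have every $d_i \in \{1\}\cup[2,\infty)$. First I rule out $d_i = 1$. If every $d_i = 1$ then $d^2-1 = \sum_i m_i$ is a positive integer lying in $(4, 4.236)$, forcing $d^2 = 5$, which is excluded from the open interval. If some but not all $d_i$ equal $1$, Lemma \ref{intermediate} produces a proper intermediate subfactor $N\subset P\subset M$; by Jones' quantization of subfactor indices no factorization $d^2 = [M:P][P:N]$ with both factors exceeding $1$ lies strictly inside $(5, 3+\sqrt 5)$, since the closest candidate $2\cdot \tfrac{3+\sqrt 5}{2} = 3+\sqrt 5$ only hits the closed boundary.

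So every $d_i \geq 2$. Combined with $\sum m_i d_i < 2+\sqrt 5$ and $\sum m_i \geq 2$, only two configurations survive: case (a) a single $Y$ of multiplicity $2$ with $d(Y) \in [2, \tfrac{2+\sqrt 5}{2}) \approx [2, 2.118)$; or case (b) two distinct $Y_1, Y_2$ of multiplicity one each with $d_i \in [2, \sqrt 5)$ and $d_1 + d_2 \in (4, 2+\sqrt 5)$. Case (a) is immediate: Frobenius reciprocity gives $Y\tensor X \supset 2X$, so writing $Y\tensor X = 2X \directSum W$ we have $\dim W = (d(Y)-2)\,d < 0.118 \cdot 2.29 < 1$, forcing $W = 0$; hence $d(Y) = 2$ and $d^2 = 5$, a contradiction.

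The main obstacle is case (b), and this is where the finite-depth hypothesis becomes essential. The plan is to promote the dimension analysis to the $4$-partite graph (principal graph together with dual data), mirroring the closing argument of Lemma \ref{lem:depth2dim2}. From $Y_i \tensor X = X \directSum W_i$ one reads off $\dim W_i = (d_i - 1)d \in [d,\,(\sqrt 5 - 1)d) \subset [2.24,\,2.83)$, and Frobenius reciprocity constrains exactly how the depth-$3$ simples of each $W_i$ reattach to $Y_1$ and $Y_2$. Applying the triple-point obstruction at each of the two depth-$2$ vertices (both of which now qualify as ``triple points'' since they have degree at least three once depth-$3$ edges are drawn), and using finite-depth to force the graph to close up, one obtains either a path-counting inconsistency (in the spirit of the closing diagram in Lemma \ref{lem:depth2dim2}, where the number of paths around the $4$-partite graph fails to match going down versus up and around) or a dimension overflow at depth $3$ or $4$. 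Either outcome rules out case (b), completing the proof.
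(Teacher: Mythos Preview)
Your reduction through case (a) is correct and tracks the paper's opening moves. The gap is case (b): you assert that a triple-point and path-counting argument ``in the spirit of'' Lemma~\ref{lem:depth2dim2} will finish, but you do not carry anything out. That lemma's closing combinatorics depended on having a depth-$2$ object of dimension in $(1,2)$, which generates an ADET subcategory and thereby pins the index to a specific algebraic number and forces the graph into a short explicit list. With both $d_i \in [2,\sqrt 5)$ you have no such anchor: the dimensions at depth $3$ are only loosely bounded, the depth-$2$ vertices need not be triple points in the required sense, and there is no visible mechanism forcing the $4$-partite graph into a contradiction. As written, case (b) is simply unproved.

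The paper takes an entirely different route here, and this is precisely where the finite-depth hypothesis is spent. Finite depth makes the even part a fusion category, and the paper then invokes the classification of small Frobenius--Perron dimensions in fusion categories (from \cite{1004.0665}, or equivalently the index-$<5$ subfactor classification \cite{index5-part1,index5-part2,index5-part3,index5-part4}): any simple with dimension in the open interval $(2,\sqrt 5)$ must have dimension exactly $\tfrac{\sqrt 3+\sqrt 7}{2}$. If all depth-$2$ simples lie in that interval, the index is at least $1+\sqrt 3+\sqrt 7 > 3+\sqrt 5$; otherwise some depth-$2$ simple has dimension in $(1,2)$, and Lemma~\ref{lem:depth2dim2} applies. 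This single external input is the key lemma you are missing; the graph-combinatorial substitute you sketch is not an argument.
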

\begin{proof}
We analyze the possibilities in terms of the dimensions of their depth-two objects.

If there were an object of dimension 1 at depth two, then we would be in one of the following two cases.  Either all objects at depth two would have dimension one (in which case, the principal graph is a star and has integer index), or there would be an intermediate subfactor by Lemma \ref{intermediate} (requiring the index to factor into allowed index values, which does not happen in $(5,3+\sqrt{5})$).

 An even object with dimension in $(2,\sqrt{5})$ would have dimension $\frac{\sqrt{3}+\sqrt{7}}{2}$, either by the analysis of possible small dimensions in fusion categories from \cite{1004.0665}, or by the classification of subfactors with index less than $5$, from \cite{index5-part1,index5-part2,index5-part3,index5-part4}.
 If this held for all objects at depth 2, $\dim{X}^2 \geq 1+\sqrt{3} + \sqrt{7} > 3 + \sqrt{5}$.

Thus, some even object $Z$ at depth $2$ has  $1 < \dim{Z} < 2$.  Lemma \ref{lem:depth2dim2} shows that this subfactor has principal graph $A$.
\end{proof}

What happens if we drop the finite depth condition?
The above argument fails --- an object at depth $2$ could have dimension in the interval $(2, \sqrt{5})$, generating an infinite depth $A_\infty$ subcategory.
Happily, we can simply run the odometer, as in \cite{index5-part2}.

\begin{proof}[Proof of Theorem \ref{thm:only1ST}]
We can ignore all the graphs in Figure \ref{fig:ignore}, because they must have an intermediate subfactor, which isn't possible with index in $(5,3+\sqrt{5})$.
We can also ignore the graphs
$$\left(\bigraph{bwd1v1p1p1v1x0x0p0x1x0p0x0x1duals1v1x2x3}, \bigraph{bwd1v1p1p1v1x0x0p0x1x0p0x0x1duals1v1x2x3}\right)\qquad
\left(\bigraph{bwd1v1p1p1v1x0x0p0x1x0p0x0x1duals1v1x3x2}, \bigraph{bwd1v1p1p1v1x0x0p0x1x0p0x0x1duals1v1x3x2}\right)
$$
by Lemma \ref{lem:depth2dim2}; at least one of the three objects at depth 2 has dimension $<2$.

\begin{figure}[ht]
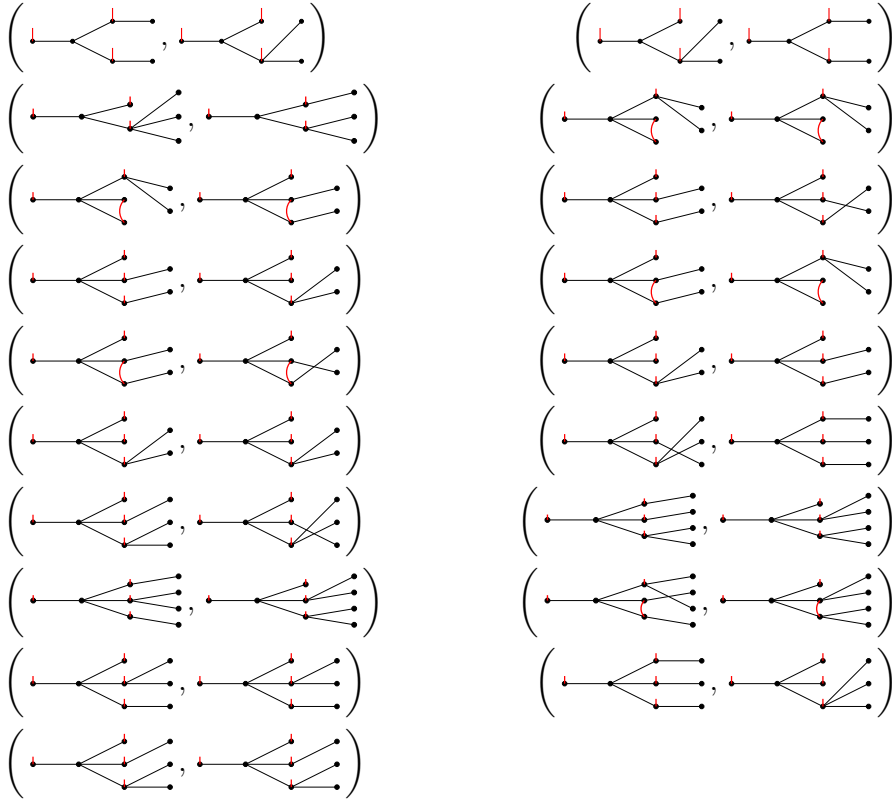


\begin{align*}
&
\left(\bigraph{bwd1v1p1v1x0p0x1duals1v1x2}, \bigraph{bwd1v1p1v1x0p1x0duals1v1x2}\right) & \qquad
\left(\bigraph{bwd1v1p1v1x0p1x0duals1v1x2}, \bigraph{bwd1v1p1v1x0p0x1duals1v1x2}\right)\displaybreak[1] \\ &
\left(\bigraph{bwd1v1p1v1x0p1x0p1x0duals1v1x2}, \bigraph{bwd1v1p1v1x0p1x0p0x1duals1v1x2}\right)&\qquad
\left(\bigraph{bwd1v1p1p1v0x0x1p0x0x1duals1v2x1x3}, \bigraph{bwd1v1p1p1v0x0x1p0x0x1duals1v2x1x3}\right)\displaybreak[1] \\&
\left(\bigraph{bwd1v1p1p1v0x0x1p0x0x1duals1v2x1x3}, \bigraph{bwd1v1p1p1v1x0x0p0x1x0duals1v2x1x3}\right)&\qquad
\left(\bigraph{bwd1v1p1p1v1x0x0p0x1x0duals1v1x2x3}, \bigraph{bwd1v1p1p1v0x1x0p1x0x0duals1v1x2x3}\right)\displaybreak[1] \\&
\left(\bigraph{bwd1v1p1p1v1x0x0p0x1x0duals1v1x2x3}, \bigraph{bwd1v1p1p1v1x0x0p1x0x0duals1v1x2x3}\right)&\qquad
\left(\bigraph{bwd1v1p1p1v1x0x0p0x1x0duals1v2x1x3}, \bigraph{bwd1v1p1p1v0x0x1p0x0x1duals1v2x1x3}\right)\displaybreak[1] \\&
\left(\bigraph{bwd1v1p1p1v1x0x0p0x1x0duals1v2x1x3}, \bigraph{bwd1v1p1p1v0x1x0p1x0x0duals1v2x1x3}\right)&\qquad
\left(\bigraph{bwd1v1p1p1v1x0x0p1x0x0duals1v1x2x3}, \bigraph{bwd1v1p1p1v1x0x0p0x1x0duals1v1x2x3}\right)\displaybreak[1] \\&
\left(\bigraph{bwd1v1p1p1v1x0x0p1x0x0duals1v1x2x3}, \bigraph{bwd1v1p1p1v1x0x0p1x0x0duals1v1x2x3}\right)&\qquad
\left(\bigraph{bwd1v1p1p1v0x1x0p1x0x0p1x0x0duals1v1x2x3}, \bigraph{bwd1v1p1p1v1x0x0p0x1x0p0x0x1duals1v1x2x3}\right)\displaybreak[1] \\&
\left(\bigraph{bwd1v1p1p1v1x0x0p1x0x0p0x1x0duals1v1x2x3}, \bigraph{bwd1v1p1p1v0x1x0p1x0x0p1x0x0duals1v1x2x3}\right)&\qquad
\left(\bigraph{bwd1v1p1p1v1x0x0p1x0x0p0x1x0p0x0x1duals1v1x2x3}, \bigraph{bwd1v1p1p1v1x0x0p1x0x0p0x1x0p0x1x0duals1v1x2x3}\right)\displaybreak[1] \\&
\left(\bigraph{bwd1v1p1p1v1x0x0p0x1x0p0x1x0p0x0x1duals1v1x2x3}, \bigraph{bwd1v1p1p1v1x0x0p1x0x0p0x1x0p0x1x0duals1v1x2x3}\right)&\qquad
\left(\bigraph{bwd1v1p1p1v1x0x0p0x0x1p0x1x0p0x0x1duals1v2x1x3}, \bigraph{bwd1v1p1p1v1x0x0p1x0x0p0x1x0p0x1x0duals1v2x1x3}\right)\displaybreak[1] \\&
\left(\bigraph{bwd1v1p1p1v1x0x0p0x1x0p0x1x0duals1v1x2x3}, \bigraph{bwd1v1p1p1v1x0x0p0x1x0p0x1x0duals1v1x2x3}\right)&\qquad
\left(\bigraph{bwd1v1p1p1v1x0x0p0x1x0p0x0x1duals1v1x2x3}, \bigraph{bwd1v1p1p1v1x0x0p1x0x0p1x0x0duals1v1x2x3}\right)\displaybreak[1] \\&
\left(\bigraph{bwd1v1p1p1v1x0x0p1x0x0p0x1x0duals1v1x2x3}, \bigraph{bwd1v1p1p1v1x0x0p1x0x0p0x1x0duals1v1x2x3}\right)
\end{align*}
\caption{Principals graphs for which there must be an intermediate subfactor.}
\label{fig:ignore}
\end{figure}

The result of running the odometer and ignoring the weeds mentioned above is the tree shown in Figure \ref{fig:odometer} (in which the leaves, i.e. the red and blue graphs, are the remaining weeds). 
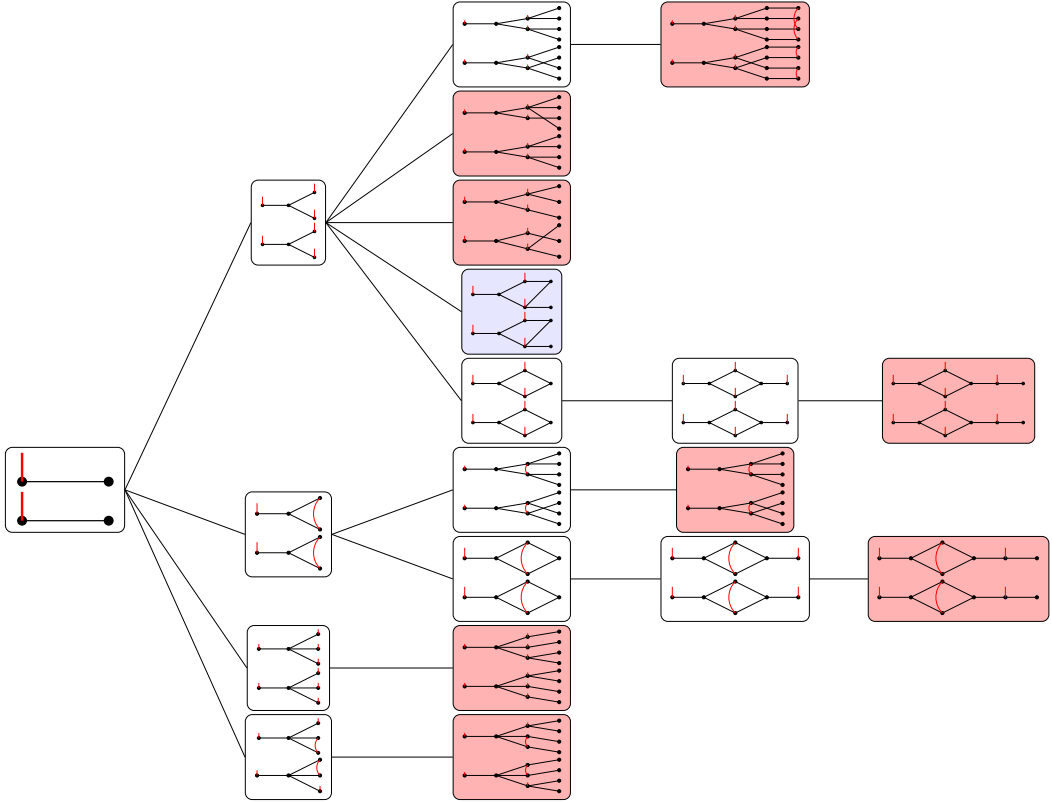
\begin{figure}[th]
$$
\scalebox{0.65}{
 \begin{tikzpicture}
\tikzset{grow=right,level distance=130pt}
\tikzset{every tree node/.style={draw,fill=white,rectangle,rounded corners,inner sep=2pt}}
\Tree
[.\node{$\!\!\begin{array}{c}\bigraph{bwd1duals1}\\\bigraph{bwd1duals1}\end{array}\!\!$};
	[.\node{$\!\!\begin{array}{c}\bigraph{bwd1v1p1p1duals1v2x1x3}\\\bigraph{bwd1v1p1p1duals1v1x3x2}\end{array}\!\!$};
		[.\node[fill=red!30]{$\!\!\begin{array}{c}\bigraph{bwd1v1p1p1v1x0x0p0x1x0p0x0x1p0x0x1duals1v2x1x3}\\\bigraph{bwd1v1p1p1v1x0x0p1x0x0p0x1x0p0x0x1duals1v1x3x2}\end{array}\!\!$};]	
	]
	[.\node{$\!\!\begin{array}{c}\bigraph{bwd1v1p1p1duals1v1x2x3}\\\bigraph{bwd1v1p1p1duals1v1x2x3}\end{array}\!\!$};
		[.\node[fill=red!30]{$\!\!\begin{array}{c}\bigraph{bwd1v1p1p1v1x0x0p1x0x0p0x1x0p0x0x1duals1v1x2x3}\\\bigraph{bwd1v1p1p1v1x0x0p0x1x0p0x0x1p0x0x1duals1v1x2x3}\end{array}\!\!$};]	
	]
	[.\node{$\!\!\begin{array}{c}\bigraph{bwd1v1p1duals1v2x1}\\\bigraph{bwd1v1p1duals1v2x1}\end{array}\!\!$};
		[.\node{$\!\!\begin{array}{c}\bigraph{bwd1v1p1v1x1duals1v2x1}\\\bigraph{bwd1v1p1v1x1duals1v2x1}\end{array}\!\!$};
			[.\node{$\!\!\begin{array}{c}\bigraph{bwd1v1p1v1x1v1duals1v2x1v1}\\\bigraph{bwd1v1p1v1x1v1duals1v2x1v1}\end{array}\!\!$};
				[.\node[fill=red!30]{$\!\!\begin{array}{c}\bigraph{bwd1v1p1v1x1v1v1duals1v2x1v1}\\\bigraph{bwd1v1p1v1x1v1v1duals1v2x1v1}\end{array}\!\!$};]
			]
		]
		[.\node{$\!\!\begin{array}{c}\bigraph{bwd1v1p1v1x0p1x0p0x1p0x1duals1v2x1}\\\bigraph{bwd1v1p1v1x0p0x1p1x0p0x1duals1v2x1}\end{array}\!\!$};
			[.\node[fill=red!30]{$\!\!\begin{array}{c}\bigraph{bwd1v1p1v1x0p1x0p0x1p0x1duals1v2x1}\\\bigraph{bwd1v1p1v1x0p0x1p1x0p0x1duals1v2x1}\end{array}\!\!$};]
		]
	]
	[.\node{$\!\!\begin{array}{c}\bigraph{bwd1v1p1duals1v1x2}\\\bigraph{bwd1v1p1duals1v1x2}\end{array}\!\!$};
		[.\node{$\!\!\begin{array}{c}\bigraph{bwd1v1p1v1x1duals1v1x2}\\\bigraph{bwd1v1p1v1x1duals1v1x2}\end{array}\!\!$};
			[.\node{$\!\!\begin{array}{c}\bigraph{bwd1v1p1v1x1v1duals1v1x2v1}\\\bigraph{bwd1v1p1v1x1v1duals1v1x2v1}\end{array}\!\!$};
				[.\node[fill=red!30]{$\!\!\begin{array}{c}\bigraph{bwd1v1p1v1x1v1v1duals1v1x2v1}\\\bigraph{bwd1v1p1v1x1v1v1duals1v1x2v1}\end{array}\!\!$};]
			]	
		]	
		[.\node[fill=blue!10]{$\!\!\begin{array}{c}\bigraph{bwd1v1p1v1x0p1x1duals1v1x2}\\\bigraph{bwd1v1p1v1x0p1x1duals1v1x2}\end{array}\!\!$};]	
		[.\node[fill=red!30]{$\!\!\begin{array}{c}\bigraph{bwd1v1p1v1x0p0x1p0x1duals1v1x2}\\\bigraph{bwd1v1p1v1x0p0x1p1x0duals1v1x2}\end{array}\!\!$};]	
		[.\node[fill=red!30]{$\!\!\begin{array}{c}\bigraph{bwd1v1p1v0x1p1x0p0x1p0x1duals1v1x2}\\\bigraph{bwd1v1p1v1x0p1x0p0x1p0x1duals1v1x2}\end{array}\!\!$};]	
		[.\node{$\!\!\begin{array}{c}\bigraph{bwd1v1p1v1x0p1x0p0x1p0x1duals1v1x2}\\\bigraph{bwd1v1p1v1x0p0x1p1x0p0x1duals1v1x2}\end{array}\!\!$};
			[.\node[fill=red!30]{$\!\!\begin{array}{c}\bigraph{bwd1v1p1v1x0p1x0p0x1p0x1v1x0x0x0p0x1x0x0p0x0x1x0p0x0x0x1duals1v1x2v3x4x1x2}\\\bigraph{bwd1v1p1v1x0p0x1p1x0p0x1v1x0x0x0p0x1x0x0p0x0x1x0p0x0x0x1duals1v1x2v2x1x4x3}\end{array}\!\!$};]	
		]	
	]
]
\end{tikzpicture}
}
$$
\caption{Running the odometer.}
\label{fig:odometer}
\end{figure}

The graph 
$$(\tikz[baseline=-3pt]{\node[inner sep=0pt] {$\bigraph{bwd1v1p1v1x0p1x1duals1v1x2}$};}, \tikz[baseline=-3pt]{\node[inner sep=0pt] {$\bigraph{bwd1v1p1v1x0p1x1duals1v1x2}$};} )$$ 
has a depth 2 object with dimension less than $2$, and again Lemma \ref{lem:depth2dim2} shows that the only subfactors coming from this weed have principal graph $\Gamma(\cA)$.

 For each red pair, some object has an impossible dimension.  The hardest case is for the graphs of the form $$\bigraph{gbg1v1p1v1x1v1v1}.$$
The last two objects have dimensions 
\begin{align*}
d_1 & = \frac{1}{2}(q^4-q^2-2-q^{-2}+q^{-4}), \\
d_2 & = \frac{1}{2}(q^5-q^3-3q-3q^{-1}-q^{-3}+q^{-5}).
\end{align*}
Now $d_2 < 1.145$, and $d_2 = 1$ only if $q=1.69068...^{\pm 1}$. But then $d_1 = 1.54231...$, which is not of the form $2 \cos(\pi/n)$.

Putting this together, we have the proof of Theorem \ref{thm:only1ST}.
\end{proof}

\newpage

\section{Connections}\label{connections}

\subsection{Bi-unitary connections on $\Gamma(\cA)$}
\label{biunitaryhat}
The four-partite principal graph for $\cA$ is 
$$
\begin{tikzpicture}[inner sep=.7mm, xscale=1.5,yscale=1.2]
\begin{scope}[xshift=-4mm]
\node at (-1,0) {$1$};
\node at (-1,2) {$\hat{1}$};
\node at (-1,4) {$1$};
\node at (0,1) {$\bar{X}$};
\node at (0,3) {$X$};
\node at (1,0) {$Z$};
\node at (1,2) {$\hat{Z}$};
\node at (1,4) {$Z$};
\node at (2,0) {$Y$};
\node at (2,2) {$\hat{Y}$};
\node at (2,4) {$Y$};
\node at (3,1) {$\bar{W}$};
\node at (3,3) {$W$};
\node at (4,1) {$\bar{g}$};
\node at (4,3) {$g$};
\end{scope}

\node(1) at (-1,0) [circle,fill] {};
\node(h1) at (-1,2)[circle,fill] {} ;
\node(11) at (-1,4)[circle,fill] {};
\node(bX) at (0,1) [circle,fill]{};
\node(X) at (0,3) [circle,fill]{};
\node(Z) at (1,0) [circle,fill]{};
\node(hZ) at (1,2) [circle,fill]{};
\node(ZZ) at (1,4)[circle,fill] {};
\node(Y) at (2,0) [circle,fill]{};
\node(hY) at (2,2) [circle,fill]{};
\node(YY) at (2,4) [circle,fill]{};
\node(bW) at (3,1) [circle,fill]{};
\node(W) at (3,3) [circle,fill]{};
\node(bg) at (4,1)[circle,fill] {};
\node(g) at (4,3)[circle,fill] {};

\draw (11)--(X)--(YY)--(W)--(ZZ)--(X);
\draw (YY)--(g);
\draw (h1)--(X)--(hY)--(W)--(hZ)--(X);
\draw (hY)--(g);
\draw (h1)--(bX)--(hY)--(bW)--(hZ)--(bX);
\draw (hY)--(bg);
\draw (1)--(bX)--(Y)--(bW)--(Z)--(bX);
\draw (Y)--(bg);
\end{tikzpicture}
$$

We determine which biunitary connections are flat, using a condition from \cite[Theorem 1.7]{index5-part3} which follows from flatness.

Recall the notion of `diagrammatic gauge' from \cite[Lemma 5.6]{index5-part3}. It is a subset of the full gauge group orbit of any bi-unitary connection, and characterized by having all connection entries in the 1-by-1 and 2-by-2 matrices corresponding to the edges of the principal graph before the first branch point real, and the connection matrix at the branch point having real first row and first column, with the top-left entry having sign $(-1)^{n+1}$ and the other entries in the first row and column being positive. It is called the diagrammatic gauge because it corresponds to using Temperley-Lieb diagrams as the basis for the appropriate 1-dimensional trivalent vertex spaces.

In particular, if a connection $K$ is in the diagrammatic gauge, and $U$ is the connection matrix at the first branch point of the principal graph, then the eigenvalues of $U U^t$ are $+1$ with multiplicity $2$, along with rotational eigenvalues of the planar algebra generators corresponding to the branches. For $\cA$, there is just one such generator (since the branch point is a triple point), and its rotational eigenvalue is $+1$, since the vertices past the branch point are self-dual. Thus all the eigenvalues of $U U^t$ are $+1$, so this matrix is the identity. We use this condition to drastically simplify the calculation of possible flat bi-unitary connections.

We in fact find there are two bi-unitary connections satisfying this eigenvalue condition, and subsequently, in \S \ref{flatlowweight} discover that only one of them is actually flat, by explicitly looking for flat elements in the graph planar algebra. 

%auto-ignore
%this ensures the arxiv doesn't try to start TeXing here.
%!TEX root =hat.tex

%% from http://tex.stackexchange.com/a/27085/77
\newcommand{\maxwidth}[2][\linewidth]{% \maxwidth[<max hlen>]{<stuff>}
  \setbox0=\hbox{#2}% Store image in box0
  \ifthenelse{\dimtest{\wd0}>{#1}}%
    {\resizebox{#1}{!}{#2}}% Scale object to fit within \linewidth
    {\usebox0}% Use original (unaltered in width) object
}

\newcommand{\cellA}[6]{
\begin{scope}[shift=($(#1)+(#2)$)]
\path[#3] (-0.5,0.5) rectangle (0.5,-0.5);
\node[#4](#5) at (0,0) {\maxwidth[9mm]{#6}};
\end{scope}
}

\newcommand{\longcellA}[6]{
\begin{scope}[shift=($(#1)+(#2)$)]
\path[#3] (-0.5,0.5) rectangle (0.5,-0.5);
\node[#4](#5) at (0,0) {\maxwidth[19mm]{#6}};
\end{scope}
}

\newcommand{\emptycellA}[2]{
\cellA{#1}{#2}{fill=gray,opacity=.2}{}{}{}
}
\newcommand{\unitaryblockA}[4]{
\draw[ultra thick] ($(#1)+(#2)+(-0.5,0.5)$) rectangle ($(#3)+(#4)+(0.5,-0.5)$); 
}

\newcommand{\tileA}[3]{
\begin{scope}[shift=($(#1)+(#2)$)]
\path (-0.5,0.5) rectangle (0.5,-0.5);
\node[opacity=0.75] at (0,0) {\maxwidth[10mm]{\includegraphics[width=1cm]{diagrams/house/houses_tiles_#3}}};
\end{scope}
}

\newcommand{\cellB}[6]{
\begin{scope}[shift=($(#1)+(#2)$)]
\path[#3] (-0.5,0.5) rectangle (0.5,-0.5);
\node[#4](#5) at (0,0) {\maxwidth[9mm]{#6}};
\end{scope}
}

\newcommand{\mediumcellB}[6]{
\begin{scope}[shift=($(#1)+(#2)$)]
\path[#3] (-0.5,0.5) rectangle (0.5,-0.5);
\node[#4](#5) at (0,0) {\maxwidth[14mm]{#6}};
\end{scope}
}

\newcommand{\longcellB}[6]{
\begin{scope}[shift=($(#1)+(#2)$)]
\path[#3] (-0.5,0.5) rectangle (0.5,-0.5);
\node[#4](#5) at (0,0) {\maxwidth[22mm]{#6}};
\end{scope}
}

\usetikzlibrary{patterns}

\pgfdeclarepatternformonly{stripes}
{\pgfpointorigin}{\pgfpoint{1cm}{1cm}}
{\pgfpoint{1cm}{1cm}}
{
    \pgfpathmoveto{\pgfpoint{0cm}{0cm}}
    \pgfpathlineto{\pgfpoint{1cm}{1cm}}
    \pgfpathlineto{\pgfpoint{1cm}{0.5cm}}
    \pgfpathlineto{\pgfpoint{0.5cm}{0cm}}
    \pgfpathclose%
    \pgfusepath{fill}
    \pgfpathmoveto{\pgfpoint{0cm}{0.5cm}}
    \pgfpathlineto{\pgfpoint{0cm}{1cm}}
    \pgfpathlineto{\pgfpoint{0.5cm}{1cm}}
    \pgfpathclose%
    \pgfusepath{fill}
}

\newcommand{\normcellB}[6]{
\cellB{#1}{#2}{fill, pattern=stripes, pattern color=red!15}{#4}{#5}{#6}
}

\newcommand{\emptycellB}[2]{
\cellB{#1}{#2}{fill=gray,opacity=.2}{}{}{}
}
\newcommand{\unitaryblockB}[4]{
\draw[ultra thick] ($(#1)+(#2)+(-0.5,0.5)$) rectangle ($(#3)+(#4)+(0.5,-0.5)$); 
}

\newcommand{\principalmatrixA}{

% rows
\node(c1X) at (1,0) {$1X$};
\node(cZX) at (2,0) {$ZX$};
\node(cZW) at (3,0) {$ZW$};
\node(cYX) at (4,0) {$YX$};
\node(cYW) at (5,0) {$YW$};
\node(cYg) at (6,0) {$Y\smash{g}$};
% columns
\node(r1X) at (0,-1) {$\hat{1} \bar{X}$};
\node(rZX) at (0,-2) {$\hat{Z} \bar{X}$};
\node(rZW) at (0,-3) {$\hat{Z} \bar{W}$};
\node(rYX) at (0,-4) {$\hat{Y} \bar{X}$};
\node(rYW) at (0,-5) {$\hat{Y} \bar{W}$};
\node(rYg) at (0,-6) {$\hat{Y} \bar{g}$};
% a grid
\begin{scope}[shift={(0.5,0.5)}]
\draw[xstep=1,ystep=1] ($(c1X)+(rYg)+(-0.00001,-0.00001)$) grid ($(cYg)+(r1X)+(1,1)$);
\end{scope}
% unitary blocks
\unitaryblockA{r1X}{c1X}{r1X}{c1X}
\unitaryblockA{rZX}{c1X}{rZX}{c1X}
\unitaryblockA{rYX}{c1X}{rYX}{c1X}
\unitaryblockA{r1X}{cZX}{r1X}{cZX}
\unitaryblockA{rZX}{cZX}{rZW}{cZW}
\unitaryblockA{rYX}{cZX}{rYW}{cZW}
\unitaryblockA{r1X}{cYX}{r1X}{cYX}
\unitaryblockA{rZX}{cYX}{rZW}{cYW}
\unitaryblockA{rYX}{cYX}{rYg}{cYg}
% empty cells
\emptycellA{r1X}{cZW}
\emptycellA{r1X}{cYW}
\emptycellA{r1X}{cYg}
\emptycellA{rZX}{cYg}
\emptycellA{rZW}{cYg}
\emptycellA{rZW}{c1X}
\emptycellA{rYW}{c1X}
\emptycellA{rYg}{c1X}
\emptycellA{rYg}{cZX}
\emptycellA{rYg}{cZW}
}

\newcommand{\dualmatrixA}{
% rows
\node(rX1) at (0,-1) {$X \hat{1}$};
\node(rXZ) at (0,-2) {$X\hat{Z}$};
\node(rXY) at (0,-3) {$X \hat{Y}$};
\node(rWZ) at (0,-4) {$W \hat{Z}$};
\node(rWY) at (0,-5) {$W \hat{Y}$};
\node(rgY) at (0,-6) {$g \hat{Y}$};
% columns
\node(cX1) at (1,0) {$ \bar{X} 1$};
\node(cXZ) at (2,0) {$\bar{X} Z$};
\node(cXY) at (3,0) {$\bar{X} Y$};
\node(cWZ) at (4,0) {$\bar{W} Z$};
\node(cWY) at (5,0) {$\bar{W} Y$};
\node(cgY) at (6,0) {$\smash{\bar{g}} Y$};
% a grid
\begin{scope}[shift={(0.5,0.5)}]
\draw[xstep=1,ystep=1] ($(cX1)+(rgY)+(-0.00001,-0.00001)$) grid ($(cgY)+(rX1)+(1,1)$);
\end{scope}
% unitary blocks
\unitaryblockA{rX1}{cX1}{rXY}{cXY}
\unitaryblockA{rXZ}{cWZ}{rXY}{cWY}
\unitaryblockA{rXY}{cgY}{rXY}{cgY}
\unitaryblockA{rWZ}{cXZ}{rWY}{cXY}
\unitaryblockA{rWZ}{cWZ}{rWY}{cWY}
\unitaryblockA{rWY}{cgY}{rWY}{cgY}
\unitaryblockA{rgY}{cXY}{rgY}{cXY}
\unitaryblockA{rgY}{cWY}{rgY}{cWY}
\unitaryblockA{rgY}{cgY}{rgY}{cgY}
% empty cells
\emptycellA{rX1}{cWZ}
\emptycellA{rX1}{cWY}
\emptycellA{rX1}{cgY}
\emptycellA{rXZ}{cgY}
\emptycellA{rWZ}{cgY}
\emptycellA{rWZ}{cX1}
\emptycellA{rWY}{cX1}
\emptycellA{rgY}{cX1}
\emptycellA{rgY}{cXZ}
\emptycellA{rgY}{cWZ}
}

\newcommand{\principalmatrixB}{
% cols
\node(c1f) at (1,0) {$1f$};
\node(cAf) at (2,0) {$Af$};
\node(cAB) at (3,0) {$AB$};
\node(cAF) at (4,0) {$AF$};
\node(cGF) at (5,0) {$GF$};
\node(cGz) at (6,0) {$Gz$};
\node(cCB) at (7,0) {$CB$};
\node(cCD) at (8,0) {$CD$};
\node(cEF) at (9,0) {$EF$};
\node(cED) at (10,0) {$ED$};
% rows
\node(r1f) at (0,-1) {$\hat{1} \bar{f}$};
\node(rHf) at (0,-2) {$\hat{H} \bar{f}$};
\node(rHB) at (0,-3) {$\hat{H} \bar{B}$};
\node(rHF) at (0,-4) {$\hat{H} \bar{F}$};
\node(rIB) at (0,-5) {$\hat{I} \bar{B}$};
\node(rID) at (0,-6) {$\hat{I} \bar{D}$};
\node(rJF) at (0,-7) {$\hat{J} \bar{F}$};
\node(rJz) at (0,-8) {$\hat{J} \bar{z}$};
\node(rKF) at (0,-9) {$\hat{K} \bar{F}$};
\node(rKD) at (0,-10) {$\hat{K} \bar{D}$};
% a grid
\begin{scope}[shift={(0.5,0.5)}]
\draw[xstep=1,ystep=1] ($(c1f)+(rKD)+(-0.00001,-0.00001)$) grid ($(cED)+(r1f)+(1,1)$);
\end{scope}
% unitary blocks
\unitaryblockB{r1f}{c1f}{r1f}{c1f}
\unitaryblockB{rHf}{c1f}{rHf}{c1f}
\unitaryblockB{r1f}{cAf}{r1f}{cAf}
\unitaryblockB{rHf}{cAf}{rHF}{cAF}
\unitaryblockB{rHB}{cGF}{rHB}{cGF}
\unitaryblockB{rHF}{cCB}{rHF}{cCB}
\unitaryblockB{rHF}{cEF}{rHF}{cEF}
\unitaryblockB{rIB}{cAF}{rIB}{cAF}
\unitaryblockB{rIB}{cGF}{rID}{cGz}
\unitaryblockB{rID}{cEF}{rID}{cEF}
\unitaryblockB{rJF}{cAB}{rJF}{cAB}
\unitaryblockB{rJF}{cCB}{rJz}{cCD}
\unitaryblockB{rJF}{cED}{rJF}{cED}
\unitaryblockB{rKF}{cAF}{rKF}{cAF}
\unitaryblockB{rKF}{cCD}{rKF}{cCD}
\unitaryblockB{rKD}{cGF}{rKD}{cGF}
\unitaryblockB{rKF}{cEF}{rKD}{cED}
%emptycells
\emptycellB{r1f}{cAB}
\emptycellB{r1f}{cAF}
\emptycellB{r1f}{cCB}
\emptycellB{r1f}{cCD}
\emptycellB{r1f}{cGz}
\emptycellB{r1f}{cGF}
\emptycellB{r1f}{cEF}
\emptycellB{r1f}{cED}
\emptycellB{rHf}{cCB}
\emptycellB{rHf}{cCD}
\emptycellB{rHf}{cGz}
\emptycellB{rHf}{cGF}
\emptycellB{rHf}{cEF}
\emptycellB{rHf}{cED}
\emptycellB{rHB}{c1f}
\emptycellB{rHB}{cCB}
\emptycellB{rHB}{cCD}
\emptycellB{rHB}{cGz}
\emptycellB{rHB}{cEF}
\emptycellB{rHB}{cED}
\emptycellB{rHF}{c1f}
\emptycellB{rHF}{cCD}
\emptycellB{rHF}{cGz}
\emptycellB{rHF}{cGF}
\emptycellB{rHF}{cED}
\emptycellB{rID}{c1f}
\emptycellB{rID}{cAf}
\emptycellB{rID}{cAB}
\emptycellB{rID}{cAF}
\emptycellB{rID}{cCB}
\emptycellB{rID}{cCD}
\emptycellB{rID}{cED}
\emptycellB{rIB}{c1f}
\emptycellB{rIB}{cAf}
\emptycellB{rIB}{cAB}
\emptycellB{rIB}{cCB}
\emptycellB{rIB}{cCD}
\emptycellB{rIB}{cEF}
\emptycellB{rIB}{cED}
\emptycellB{rJF}{c1f}
\emptycellB{rJF}{cAf}
\emptycellB{rJF}{cAF}
\emptycellB{rJF}{cGz}
\emptycellB{rJF}{cGF}
\emptycellB{rJF}{cEF}
\emptycellB{rJz}{c1f}
\emptycellB{rJz}{cAf}
\emptycellB{rJz}{cAB}
\emptycellB{rJz}{cAF}
\emptycellB{rJz}{cGz}
\emptycellB{rJz}{cGF}
\emptycellB{rJz}{cEF}
\emptycellB{rJz}{cED}
\emptycellB{rKF}{c1f}
\emptycellB{rKF}{cAf}
\emptycellB{rKF}{cAB}
\emptycellB{rKF}{cCB}
\emptycellB{rKF}{cGz}
\emptycellB{rKF}{cGF}
\emptycellB{rKD}{c1f}
\emptycellB{rKD}{cAf}
\emptycellB{rKD}{cAB}
\emptycellB{rKD}{cAF}
\emptycellB{rKD}{cCB}
\emptycellB{rKD}{cCD}
\emptycellB{rKD}{cGz}
}

\newcommand{\dualmatrixB}{
% cols
\node(cf1) at (1,0) {$\bar{f} 1$};
\node(cfA) at (2,0) {$\bar{f} A$};
\node(cBA) at (3,0) {$\bar{B} A$};
\node(cBG) at (4,0) {$\bar{B} G$};
\node(cFC) at (5,0) {$\bar{F} C$};
\node(cFA) at (6,0) {$\bar{F} A$};
\node(cFE) at (7,0) {$\bar{F} E$};
\node(czC) at (8,0) {$\bar{z} C$};
\node(cDG) at (9,0) {$\bar{D} G$};
\node(cDE) at (10,0) {$\bar{D} E$};
% rows
\node(rf1) at (0,-1) {$f \hat{1} $};
\node(rfH) at (0,-2) {$f \hat{H} $};
\node(rBH) at (0,-3) {$B \hat{H} $};
\node(rBJ) at (0,-4) {$B \hat{J} $};
\node(rFI) at (0,-5) {$F \hat{I} $};
\node(rFH) at (0,-6) {$F \hat{H} $};
\node(rFK) at (0,-7) {$F \hat{K} $};
\node(rzI) at (0,-8) {$z \hat{I} $};
\node(rDJ) at (0,-9) {$D \hat{J} $};
\node(rDK) at (0,-10) {$D \hat{K} $};
% a grid
\begin{scope}[shift={(0.5,0.5)}]
\draw[xstep=1,ystep=1] ($(cf1)+(rDK)+(-0.00001,-0.00001)$) grid ($(cDE)+(rf1)+(1,1)$);
\end{scope}
% unitary blocks
\unitaryblockB{rf1}{cf1}{rfH}{cfA}
\unitaryblockB{rfH}{cBA}{rfH}{cBA}
\unitaryblockB{rBH}{cfA}{rBH}{cfA}
\unitaryblockB{rBH}{cBA}{rBH}{cBA}
\unitaryblockB{rfH}{cFA}{rfH}{cFA}
\unitaryblockB{rBH}{cFC}{rBJ}{cFA}
\unitaryblockB{rBJ}{czC}{rBJ}{czC}
\unitaryblockB{rFI}{cBA}{rFH}{cBG}
\unitaryblockB{rFH}{cfA}{rFH}{cfA}
\unitaryblockB{rFH}{cFA}{rFK}{cFE}
\unitaryblockB{rFI}{cDG}{rFK}{cDE}
\unitaryblockB{rzI}{cBG}{rzI}{cBG}
\unitaryblockB{rzI}{cDG}{rzI}{cDG}
\unitaryblockB{rDJ}{cFC}{rDK}{cFE}
\unitaryblockB{rDJ}{czC}{rDJ}{czC}
\unitaryblockB{rDK}{cDE}{rDK}{cDE}
%emptycells
\emptycellB{rf1}{cBA}
\emptycellB{rf1}{cBG}
\emptycellB{rf1}{cFA}
\emptycellB{rf1}{cFC}
\emptycellB{rf1}{cFE}
\emptycellB{rf1}{czC}
\emptycellB{rf1}{cDE}
\emptycellB{rf1}{cDG}
\emptycellB{rfH}{cBG}
\emptycellB{rfH}{cFC}
\emptycellB{rfH}{cFE}
\emptycellB{rfH}{czC}
\emptycellB{rfH}{cDE}
\emptycellB{rfH}{cDG}
\emptycellB{rBJ}{cfA}
\emptycellB{rBJ}{cf1}
\emptycellB{rBJ}{cBA}
\emptycellB{rBJ}{cBG}
\emptycellB{rBJ}{cFE}
\emptycellB{rBJ}{cDE}
\emptycellB{rBJ}{cDG}
\emptycellB{rBH}{cf1}
\emptycellB{rBH}{cBG}
\emptycellB{rBH}{cFE}
\emptycellB{rBH}{czC}
\emptycellB{rBH}{cDE}
\emptycellB{rBH}{cDG}
\emptycellB{rFH}{cf1}
\emptycellB{rFH}{cFC}
\emptycellB{rFH}{czC}
\emptycellB{rFH}{cDE}
\emptycellB{rFH}{cDG}
\emptycellB{rFK}{cfA}
\emptycellB{rFK}{cf1}
\emptycellB{rFK}{cBA}
\emptycellB{rFK}{cBG}
\emptycellB{rFK}{cFC}
\emptycellB{rFK}{czC}
\emptycellB{rFI}{cfA}
\emptycellB{rFI}{cf1}
\emptycellB{rFI}{cFA}
\emptycellB{rFI}{cFC}
\emptycellB{rFI}{cFE}
\emptycellB{rFI}{czC}
\emptycellB{rzI}{cfA}
\emptycellB{rzI}{cf1}
\emptycellB{rzI}{cBA}
\emptycellB{rzI}{cFA}
\emptycellB{rzI}{cFC}
\emptycellB{rzI}{cFE}
\emptycellB{rzI}{czC}
\emptycellB{rzI}{cDE}
\emptycellB{rDK}{cfA}
\emptycellB{rDK}{cf1}
\emptycellB{rDK}{cBA}
\emptycellB{rDK}{cBG}
\emptycellB{rDK}{cFA}
\emptycellB{rDK}{czC}
\emptycellB{rDK}{cDG}
\emptycellB{rDJ}{cfA}
\emptycellB{rDJ}{cf1}
\emptycellB{rDJ}{cBA}
\emptycellB{rDJ}{cBG}
\emptycellB{rDJ}{cFA}
\emptycellB{rDJ}{cDE}
\emptycellB{rDJ}{cDG}
}

\newcommand{\tileB}[3]{
  \cellB{#1}{#2}{}{opacity=0.75}{}{\includegraphics[width=0.975cm]{diagrams/house/houses_tiles_#3}}
}

\newcommand{\houses}{
\tileA{r1X}{c1X}{00}
\tileA{r1X}{cZX}{01}
\tileA{r1X}{cYX}{03}
\tileA{rZX}{c1X}{06}
\tileA{rZX}{cZX}{07}
\tileA{rZX}{cZW}{08}
\tileA{rZX}{cYX}{09}
\tileA{rZX}{cYW}{10}
\tileA{rZW}{cZX}{13}
\tileA{rZW}{cZW}{14}
\tileA{rZW}{cYX}{15}
\tileA{rZW}{cYW}{16}
\tileA{rYX}{c1X}{18}
\tileA{rYX}{cZX}{19}
\tileA{rYX}{cZW}{20}
\tileA{rYX}{cYX}{21}
\tileA{rYX}{cYW}{22}
\tileA{rYX}{cYg}{23}
\tileA{rYW}{cZX}{25}
\tileA{rYW}{cZW}{26}
\tileA{rYW}{cYX}{27}
\tileA{rYW}{cYW}{28}
\tileA{rYW}{cYg}{29}
\tileA{rYg}{cYX}{33}
\tileA{rYg}{cYW}{34}
\tileA{rYg}{cYg}{35}
}

\newcommand{\shuffledhouses}{
\tileA{rX1}{cX1}{00}
\tileA{rX1}{cXZ}{01}
\tileA{rX1}{cXY}{03}
\tileA{rXZ}{cX1}{06}
\tileA{rXZ}{cXZ}{07}
\tileA{rXZ}{cXY}{09}
\tileA{rXZ}{cWZ}{13}
\tileA{rXZ}{cWY}{15}
\tileA{rXY}{cX1}{18}
\tileA{rXY}{cXZ}{19}
\tileA{rXY}{cXY}{21}
\tileA{rXY}{cWZ}{25}
\tileA{rXY}{cWY}{27}
\tileA{rXY}{cgY}{33}
\tileA{rWZ}{cXZ}{08}
\tileA{rWZ}{cXY}{10}
\tileA{rWZ}{cWZ}{14}
\tileA{rWZ}{cWY}{16}
\tileA{rWY}{cXZ}{20}
\tileA{rWY}{cXY}{22}
\tileA{rWY}{cWZ}{26}
\tileA{rWY}{cWY}{28}
\tileA{rWY}{cgY}{34}
\tileA{rgY}{cXY}{23}
\tileA{rgY}{cWY}{29}
\tileA{rgY}{cgY}{35}
}

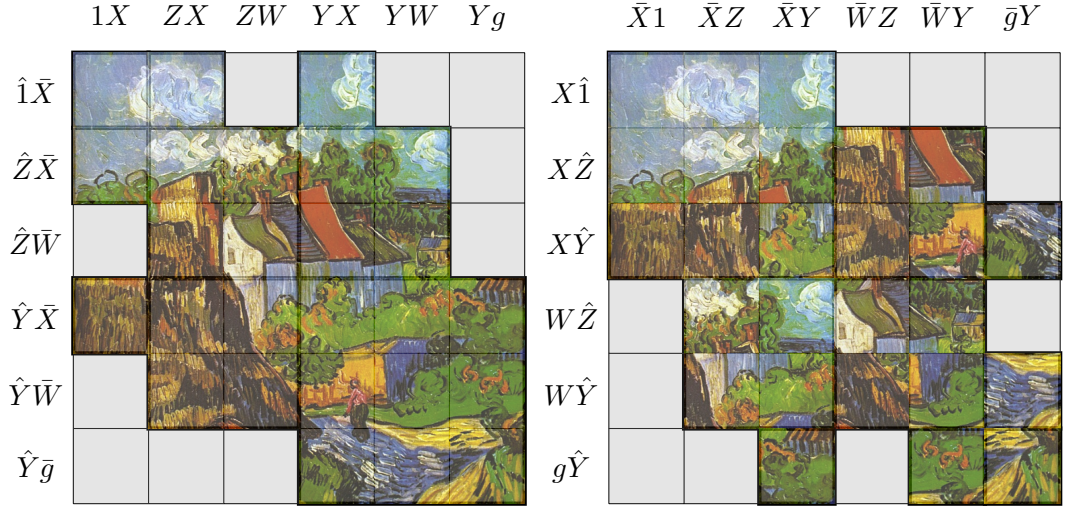
\begin{figure}[t]
\begin{equation*}
\begin{tikzpicture}
\principalmatrixA
\houses
\end{tikzpicture}
\begin{tikzpicture}
\dualmatrixA
\shuffledhouses
\end{tikzpicture}
\end{equation*}
\caption{Houses in Auvers, 2, by Vincent van Gogh, illustrating the bijection between nonzero connection entries on the principal and dual principal graphs.}
\label{fig:Auvers}
\end{figure}

The connection entries $1X\hat{1}\bar{X}, ZX\hat{1}\bar{X}, YX\hat{1}\bar{X}, 1X\hat{Z}\bar{X}$ and  $1X\hat{Y}\bar{X}$ each lie in a 1-by-1 matrix, so have norm 1. The diagrammatic gauge choice means these are all exactly $+1$. We then use the renormalization axiom to transfer these across to the dual graph. Below $d=\dim{X}=\dim{Y} \approx 2.24698$ and $e=\dim{Z}=\dim{W}= d^2 - d - 1 \approx 1.80194$.

\newcommand{\stepAp}{
\cellA{r1X}{c1X}{}{}{}{1}
\cellA{r1X}{cZX}{}{}{}{1}
\cellA{r1X}{cYX}{}{}{}{1}
\cellA{rZX}{c1X}{}{}{}{1}
\cellA{rYX}{c1X}{}{}{}{1}
}

\newcommand{\stepAd}{
\cellA{rX1}{cX1}{}{}{}{$\frac{1}{d}$}
\cellA{rX1}{cXZ}{}{}{}{$\frac{\sqrt{e}}{d}$}
\cellA{rX1}{cXY}{}{}{}{$\frac{\sqrt{d}}{d}$}
\cellA{rXZ}{cX1}{}{}{}{$\frac{\sqrt{e}}{d}$}
\cellA{rXY}{cX1}{}{}{}{$\frac{\sqrt{d}}{d}$}
}

\begin{equation*}
\begin{tikzpicture}[yscale=.8]
\principalmatrixA
\stepAp
\end{tikzpicture}
\begin{tikzpicture}[yscale=.8]
\dualmatrixA
\stepAd
\end{tikzpicture}
\end{equation*}

We quickly see that the 3-by-3 $X \bar{X}$ matrix in the dual connection must be symmetric. Call this matrix $U$ and recall $U U^t = \bf{1}$. Now $U = U^t = U^{-1} = U^*$ tells us that $U$ is real, and this allows us to completely solve for $U$. We obtain two solutions, %% c.f. /code/FindingU.nb for this calculation
$$U = \begin{pmatrix}
\frac{1}{d} & \frac{\sqrt{e}}{d} & \frac{\sqrt{d}}{d} \\
\frac{\sqrt{e}}{d} & \frac{e}{d} r^{(i)}_1 & \sqrt{\frac{e}{d}} r^{(i)}_2 \\
\frac{\sqrt{d}}{d} & \sqrt{\frac{e}{d}} r^{(i)}_2 & r^{(i)}_3
\end{pmatrix}
$$
where
\begin{align*}
r^{(1)}_1 & = d^2 -4d+3 \\
r^{(1)}_2 & = d^2 -3d +2\\
r^{(1)}_3 & = d^2 -3d +1\\
\intertext{and}
r^{(2)}_1 & = -d^2+2d+1\\
r^{(2)}_2 & = -d^2+d+2\\
r^{(2)}_3 & = -d^2+d+3
\end{align*}

We next transfer these entries back to the principal graph.

\newcommand{\stepBp}{
\stepAp
\cellA{rZX}{cZX}{}{}{}{$r_1$}
\cellA{rYX}{cZX}{}{}{}{$r_2$}
\cellA{rZX}{cYX}{}{}{}{$r_2$}
\cellA{rYX}{cYX}{}{}{}{$r_3$}
}

\newcommand{\stepBd}{
\stepAd
\cellA{rXZ}{cXZ}{}{}{}{$\frac{e}{d} r_1$}
\cellA{rXZ}{cXY}{}{}{}{$\sqrt{\frac{e}{d}} r_2$}
\cellA{rXY}{cXZ}{}{}{}{$\sqrt{\frac{e}{d}} r_2$}
\cellA{rXY}{cXY}{}{}{}{$r_3$}
}

\begin{equation*}
\begin{tikzpicture}[yscale=.8]
\principalmatrixA
\stepBp
\end{tikzpicture}
\begin{tikzpicture}[yscale=.8]
\dualmatrixA
\stepBd
\end{tikzpicture}
\end{equation*}

With $s_i = \sqrt{1-r_i^2}$, we next introduce six new variables $\xi_1, \xi_2, \xi_2', \alpha_1, \alpha_2$ and $\alpha_2' \in \bbT$ on the unit circle. Using these we complete the 2-by-2 matrices on the principal graph, and transfer all the new entries back over the to dual graph.

\newcommand{\stepCp}{
\stepBp
% top left
\cellA{rZW}{cZW}{}{}{}{$r_1 \xi_1$}
\cellA{rZW}{cZX}{}{}{}{$s_1 \alpha_1$}
\cellA{rZX}{cZW}{}{}{}{$s_1 \frac{- \xi_1}{ \alpha_1}$}
% bottom left
\cellA{rYW}{cZW}{}{}{}{$r_2 \xi_2$}
\cellA{rYW}{cZX}{}{}{}{$s_2 \alpha_2$}
\cellA{rYX}{cZW}{}{}{}{$s_2 \frac{- \xi_2}{ \alpha_2}$}
% top right
\cellA{rZW}{cYW}{}{}{}{$r_2 \xi_2'$}
\cellA{rZW}{cYX}{}{}{}{$s_2 \alpha_2'$}
\cellA{rZX}{cYW}{}{}{}{$s_2 \frac{- \xi_2'}{ \alpha_2'}$}
}

\newcommand{\stepCd}{
\stepBd
% bottom left
\cellA{rWZ}{cXZ}{}{}{}{$\sqrt{\frac{e}{d}} s_1\frac{- \xi_1}{ \alpha_1}$}
\cellA{rWZ}{cXY}{}{}{}{$s_2\frac{- \xi_2'}{ \alpha_2' }$}
\cellA{rWY}{cXZ}{}{}{}{$s_2\frac{- \xi_2}{ \alpha_2}$}
% top right
\cellA{rXZ}{cWZ}{}{}{}{$\sqrt{\frac{e}{d}} s_1 \alpha_1$}
\cellA{rXZ}{cWY}{}{}{}{$s_2 \alpha_2'$}
\cellA{rXY}{cWZ}{}{}{}{$s_2 \alpha_2$}
% bottom right
\cellA{rWZ}{cWZ}{}{}{}{$r_1 \xi_1$}
\cellA{rWZ}{cWY}{}{}{}{$\sqrt{\frac{d}{e}} r_2 \xi_2'$}
\cellA{rWY}{cWZ}{}{}{}{$\sqrt{\frac{d}{e}} r_2 \xi_2$}
}

\begin{equation*}
\begin{tikzpicture}[yscale=.8]
\principalmatrixA
\stepCp
\end{tikzpicture}
\begin{tikzpicture}[yscale=.8]
\dualmatrixA
\stepCd
\end{tikzpicture}
\end{equation*}

We can now fill in the bottom right entries of these 2-by-2 matrices; we also introduce variables $\beta_1, \ldots \beta_5 \in \bbT$ on the unit circle for the remaining 1-by-1 matrices in the dual connection, and transfer everything back to the principal connection matrix:

\newcommand{\stepDp}{
\stepCp
\cellA{rYX}{cYW}{}{}{}{${\frac{e}{d}} s_1\frac{\alpha_1 \xi_2 \xi_2'}{\xi_1  \alpha_2 \alpha_2'} $}
\cellA{rYX}{cYg}{}{}{}{$\sqrt{\frac{1}{d}} \beta_3$}
\cellA{rYW}{cYX}{}{}{}{${\frac{e}{d}} s_1 \frac{-\alpha_2 \alpha_2'}{ \alpha_1}$}
\cellA{rYW}{cYW}{}{}{}{${\frac{e}{d}} r_1 \frac{- \xi_2 \xi_2'}{\xi_1}$}
\cellA{rYW}{cYg}{}{}{}{$\sqrt{\frac{e}{d^2}} \beta_4$}
\cellA{rYg}{cYX}{}{}{}{$\sqrt{\frac{1}{d}} \beta_1$}
\cellA{rYg}{cYW}{}{}{}{$\sqrt{\frac{e}{d^2}} \beta_2$}
\cellA{rYg}{cYg}{}{}{}{${\frac{1}{d}} \beta_5$}
}

\newcommand{\stepDd}{
\stepCd
% bottom left
\cellA{rWY}{cXY}{}{}{}{$\sqrt{\frac{e}{d}} s_1\frac{\alpha_1 \xi_2 \xi_2'}{\xi_1  \alpha_2 \alpha_2'}$}
% top right
\cellA{rXY}{cWY}{}{}{}{$\sqrt{\frac{e}{d}} s_1 \frac{-\alpha_2 \alpha_2'}{ \alpha_1}$}
% bottom right
\cellA{rWY}{cWY}{}{}{}{$r_1 \frac{- \xi_2 \xi_2'}{\xi_1}$}
%1-by-1s
\cellA{rXY}{cgY}{}{}{}{$\beta_1$}
\cellA{rWY}{cgY}{}{}{}{$\beta_2$}
\cellA{rgY}{cXY}{}{}{}{$\beta_3$}
\cellA{rgY}{cWY}{}{}{}{$\beta_4$}
\cellA{rgY}{cgY}{}{}{}{$\beta_5$}
}

\begin{align*}
\begin{tikzpicture}[yscale=.8]
\principalmatrixA
\stepDp
\end{tikzpicture}
% \displaybreak[1] \\
\begin{tikzpicture}[yscale=.8]
\dualmatrixA
\stepDd
\end{tikzpicture}
\end{align*}

We check our work up to this point by verifying that the rows of the 3-by-3 matrix in the principal connection have unit norm.  Next, we eliminate some phases using orthogonality of the rows and of the columns of this matrix. All dot products of rows with each other, and columns with each other, have the form
$$m_1 \phi_1 + m_2 \phi_2 + m_3 \phi_3=0, \qquad m_i \in \mathbb{R}, \qquad \phi_i \in \mathbb{T}$$ with $m_1 + m_2+m_3=0$.
This implies (via the triangle inequality) that $\phi_1 = \phi_2 = \phi_3$, from which we deduce
\begin{align*} 
\beta_2 &= \sigma^{(i)} \frac{\beta_1 \alpha_1 \xi_2 \xi_2'}{\xi_1 \alpha_2 \alpha_2'} \\
\beta_4 &= - \sigma^{(i)} \frac{\beta_3 \alpha_2 \alpha_2'}{ \alpha_1} \\
\beta_5 &= \beta_1 \beta_3
\end{align*}
where $\sigma^{(1)}=+1$ and $\sigma^{(2)}=-1$.

The connection matrices are thus:

\newcommand{\stepEp}{
\stepCp
\cellA{rYX}{cYW}{}{}{}{${\frac{e}{d}} s_1\frac{\alpha_1 \xi_2 \xi_2'}{\xi_1  \alpha_2 \alpha_2'} $}
\cellA{rYX}{cYg}{}{}{}{$\sqrt{\frac{1}{d}} \beta_3$}
\cellA{rYW}{cYX}{}{}{}{${\frac{e}{d}} s_1 \frac{-\alpha_2 \alpha_2'}{ \alpha_1}$}
\cellA{rYW}{cYW}{}{}{}{${\frac{e}{d}} r_1 \frac{- \xi_2 \xi_2'}{\xi_1}$}
\cellA{rYW}{cYg}{}{}{}{$\sqrt{\frac{e}{d^2}} \sigma \frac{-\beta_3 \alpha_2 \alpha_2'}{\alpha_1}$}
\cellA{rYg}{cYX}{}{}{}{$\sqrt{\frac{1}{d}} \beta_1$}
\cellA{rYg}{cYW}{}{}{}{$\sqrt{\frac{e}{d^2}} \sigma \frac{\beta_1 \alpha_1 \xi_2 \xi_2'}{\xi_1 \alpha_2 \alpha_2'}$}
\cellA{rYg}{cYg}{}{}{}{${\frac{1}{d}} \beta_1 \beta_3$}
}

\newcommand{\stepEd}{
\stepCd
% bottom left
\cellA{rWY}{cXY}{}{}{}{$\sqrt{\frac{e}{d}} s_1\frac{\alpha_1 \xi_2 \xi_2'}{\xi_1  \alpha_2 \alpha_2'}$}
% top right
\cellA{rXY}{cWY}{}{}{}{$\sqrt{\frac{e}{d}} s_1 \frac{-\alpha_2 \alpha_2'}{ \alpha_1}$}
% bottom right
\cellA{rWY}{cWY}{}{}{}{$r_1 \frac{- \xi_2 \xi_2'}{\xi_1}$}
%1-by-1s
\cellA{rXY}{cgY}{}{}{}{$\beta_1$}
\cellA{rWY}{cgY}{}{}{}{$\sigma \frac{\beta_1 \alpha_1 \xi_2 \xi_2'}{\xi_1 \alpha_2 \alpha_2'}$}
\cellA{rgY}{cXY}{}{}{}{$\beta_3$}
\cellA{rgY}{cWY}{}{}{}{$- \sigma \frac{\beta_3 \alpha_2 \alpha_2'}{\alpha_1}$}
\cellA{rgY}{cgY}{}{}{}{$\beta_1 \beta_3$}
}

\begin{align*}
\begin{tikzpicture}[yscale=.8]
\principalmatrixA
\stepEp
\end{tikzpicture}
%\displaybreak[1] \\
\begin{tikzpicture}[yscale=.8]
\dualmatrixA
\stepEd
\end{tikzpicture}
\end{align*}

One can readily verify that each block is a unitary matrix.
Now we act by the remaining gauge subgroup, namely the subgroup corresponding to edges $ZW, YW$ and $Yg$ (columns in the principal matrix),  $\hat{Z}\bar{W}, \hat{Y}\bar{W}$ and $\hat{Y}\bar{g}$ (rows in the principal matrix), $\bar{W}{Z}, \bar{W}{Y}$ and $\bar{g} Y$ (columns in the dual matrix) and $W\hat{Z}$, $W\hat{Y}$ and $g\hat{Y}$ (rows in the dual matrix). In particular, we take the gauge element
% these are produced automatically in hatGaugeByHand.nb
\begin{align*}
\mu \left(W,\hat{Z}\right) & = -\frac{\alpha _2'}{\xi _2'} & 
\mu \left(g,\hat{Y}\right) & = \frac{1}{\beta _3} & 
\mu \left(\bar{W},Z\right) & = \frac{1}{\alpha _2} \\ 
\mu \left(\bar{g},Y\right) & = \frac{1}{\beta _1} & 
\mu (Z,W) & = \frac{\alpha _1 \xi _2'}{\xi _1 \alpha _2'} & 
\mu \left(\hat{Z},\bar{W}\right) & = \frac{\alpha _2}{\alpha _1} \\ 
\mu \left(\bar{W},Y\right) & = \frac{\alpha _1}{\alpha _2 \alpha _2'} & 
\mu \left(W,\hat{Y}\right) & = -\frac{\alpha _2 \xi _1 \alpha _2'}{\alpha _1 \xi _2 \xi _2'} & 
\mu (Y,g) & = 1 \\ 
\mu (Y,W) & = 1 & 
\mu \left(\hat{Y},\bar{g}\right) & = 1 & 
\mu \left(\hat{Y},\bar{W}\right) & = 1  
\end{align*}
and see that any bi-unitary connection is gauge equivalent to $K^{(1)}$ or $K^{(2)}$, obtained from the matrices below by substituting in the values $r^{(i)}_j$ and $s^{(i)}_j$ given above for $r_j$ and $s_j$. (Equivalently, one can think of $K^{(i)}$ as obtained from the matrices immediately above by setting each $\alpha_i, \alpha'_i, \beta_i$ to $1$, and each $\xi_i, \xi'_i$ to $-1$.)

% these are produced automatically in hatGaugeByHand.nb
\newcommand{\Gaugedp}{
\cellA{r1X}{c1X}{}{}{}{$1$}
\cellA{r1X}{cZX}{}{}{}{$1$}
\cellA{r1X}{cYX}{}{}{}{$1$}
\cellA{rZX}{c1X}{}{}{}{$1$}
\cellA{rZX}{cZX}{}{}{}{$r_1$}
\cellA{rZX}{cZW}{}{}{}{$s_1$}
\cellA{rZX}{cYX}{}{}{}{$r_2$}
\cellA{rZX}{cYW}{}{}{}{$s_2$}
\cellA{rZW}{cZX}{}{}{}{$s_1$}
\cellA{rZW}{cZW}{}{}{}{$-r_1$}
\cellA{rZW}{cYX}{}{}{}{$s_2$}
\cellA{rZW}{cYW}{}{}{}{$-r_2$}
\cellA{rYX}{c1X}{}{}{}{$1$}
\cellA{rYX}{cZX}{}{}{}{$r_2$}
\cellA{rYX}{cZW}{}{}{}{$s_2$}
\cellA{rYX}{cYX}{}{}{}{$r_3$}
\cellA{rYX}{cYW}{}{}{}{$-\frac{e s_1}{d}$}
\cellA{rYX}{cYg}{}{}{}{$\frac{1}{\sqrt{d}}$}
\cellA{rYW}{cZX}{}{}{}{$s_2$}
\cellA{rYW}{cZW}{}{}{}{$-r_2$}
\cellA{rYW}{cYX}{}{}{}{$-\frac{e s_1}{d}$}
\cellA{rYW}{cYW}{}{}{}{$\frac{e r_1}{d}$}
\cellA{rYW}{cYg}{}{}{}{$-\frac{\sqrt{e} \sigma }{d}$}
\cellA{rYg}{cYX}{}{}{}{$\frac{1}{\sqrt{d}}$}
\cellA{rYg}{cYW}{}{}{}{$-\frac{\sqrt{e} \sigma }{d}$}
\cellA{rYg}{cYg}{}{}{}{$\frac{1}{d}$}
}

\newcommand{\Gaugedd}{
\cellA{rX1}{cX1}{}{}{}{$\frac{1}{d}$}
\cellA{rX1}{cXZ}{}{}{}{$\frac{\sqrt{e}}{d}$}
\cellA{rX1}{cXY}{}{}{}{$\frac{1}{\sqrt{d}}$}
\cellA{rXZ}{cX1}{}{}{}{$\frac{\sqrt{e}}{d}$}
\cellA{rXZ}{cXZ}{}{}{}{$\frac{e r_1}{d}$}
\cellA{rXZ}{cXY}{}{}{}{$\frac{\sqrt{e} r_2}{\sqrt{d}}$}
\cellA{rXZ}{cWZ}{}{}{}{$\frac{\sqrt{e} s_1}{\sqrt{d}}$}
\cellA{rXZ}{cWY}{}{}{}{$s_2$}
\cellA{rXY}{cX1}{}{}{}{$\frac{1}{\sqrt{d}}$}
\cellA{rXY}{cXZ}{}{}{}{$\frac{\sqrt{e} r_2}{\sqrt{d}}$}
\cellA{rXY}{cXY}{}{}{}{$r_3$}
\cellA{rXY}{cWZ}{}{}{}{$s_2$}
\cellA{rXY}{cWY}{}{}{}{$-\frac{\sqrt{e} s_1}{\sqrt{d}}$}
\cellA{rXY}{cgY}{}{}{}{$1$}
\cellA{rWZ}{cXZ}{}{}{}{$\frac{\sqrt{e} s_1}{\sqrt{d}}$}
\cellA{rWZ}{cXY}{}{}{}{$s_2$}
\cellA{rWZ}{cWZ}{}{}{}{$-r_1$}
\cellA{rWZ}{cWY}{}{}{}{$-\frac{\sqrt{d} r_2}{\sqrt{e}}$}
\cellA{rWY}{cXZ}{}{}{}{$s_2$}
\cellA{rWY}{cXY}{}{}{}{$-\frac{\sqrt{e} s_1}{\sqrt{d}}$}
\cellA{rWY}{cWZ}{}{}{}{$-\frac{\sqrt{d} r_2}{\sqrt{e}}$}
\cellA{rWY}{cWY}{}{}{}{$r_1$}
\cellA{rWY}{cgY}{}{}{}{$-\sigma $}
\cellA{rgY}{cXY}{}{}{}{$1$}
\cellA{rgY}{cWY}{}{}{}{$-\sigma $}
\cellA{rgY}{cgY}{}{}{}{$1$}
}

\begin{align*}
\begin{tikzpicture}[yscale=.8]
\principalmatrixA
\Gaugedp
\end{tikzpicture}
%\displaybreak[1] \\
\begin{tikzpicture}[yscale=.8]
\dualmatrixA
\Gaugedd
\end{tikzpicture}
\end{align*}

We next transfer these elements to the lopsided two-sided graph planar algebra, and make a gauge choice so that all of the coefficients lie in the field generated by the index $d^2 = \lambda$.
Thus, we define $$K^{(i)}_{\text{lopsided},0} = \natural(K^{(i)})$$ for $i=1$ and $2$, and see that these are given by

\begin{align*}
\begin{tikzpicture}[yscale=.8] 
\principalmatrixA 
\cellA{r1X}{c1X}{}{}{}{$\frac{1}{d}$}
\cellA{r1X}{cZX}{}{}{}{$\frac{1}{d}$}
\cellA{r1X}{cYX}{}{}{}{$\frac{1}{d}$}
\cellA{rZW}{cZW}{}{}{}{$-\frac{r_1}{d}$}
\cellA{rZW}{cZX}{}{}{}{$\frac{s_1}{\sqrt{d} \sqrt{e}}$}
\cellA{rZW}{cYW}{}{}{}{$-\frac{r_2}{d}$}
\cellA{rZW}{cYX}{}{}{}{$\frac{s_2}{\sqrt{d} \sqrt{e}}$}
\cellA{rZX}{c1X}{}{}{}{$\frac{1}{d}$}
\cellA{rZX}{cZW}{}{}{}{$\frac{\sqrt{e} s_1}{d^{3/2}}$}
\cellA{rZX}{cZX}{}{}{}{$\frac{r_1}{d}$}
\cellA{rZX}{cYW}{}{}{}{$\frac{\sqrt{e} s_2}{d^{3/2}}$}
\cellA{rZX}{cYX}{}{}{}{$\frac{r_2}{d}$}
\cellA{rYX}{c1X}{}{}{}{$\frac{1}{d}$}
\cellA{rYX}{cZW}{}{}{}{$\frac{\sqrt{e} s_2}{d^{3/2}}$}
\cellA{rYX}{cZX}{}{}{}{$\frac{r_2}{d}$}
\cellA{rYX}{cYW}{}{}{}{$-\frac{e^{3/2} s_1}{d^{5/2}}$}
\cellA{rYX}{cYg}{}{}{}{$\frac{1}{d^2}$}
\cellA{rYX}{cYX}{}{}{}{$\frac{r_3}{d}$}
\cellA{rYg}{cYW}{}{}{}{$-\frac{e \sigma }{d^2}$}
\cellA{rYg}{cYg}{}{}{}{$\frac{1}{d^2}$}
\cellA{rYg}{cYX}{}{}{}{$\frac{1}{d}$}
\cellA{rYW}{cZW}{}{}{}{$-\frac{r_2}{d}$}
\cellA{rYW}{cZX}{}{}{}{$\frac{s_2}{\sqrt{d} \sqrt{e}}$}
\cellA{rYW}{cYW}{}{}{}{$\frac{e r_1}{d^2}$}
\cellA{rYW}{cYg}{}{}{}{$-\frac{\sigma }{d^2}$}
\cellA{rYW}{cYX}{}{}{}{$-\frac{\sqrt{e} s_1}{d^{3/2}}$}
\end{tikzpicture}
\begin{tikzpicture}[yscale=.8] 
\dualmatrixA 
\cellA{rXZ}{cXY}{}{}{}{$r_2$}
\cellA{rXZ}{cXZ}{}{}{}{$\frac{e r_1}{d}$}
\cellA{rXZ}{cX1}{}{}{}{$\frac{1}{d}$}
\cellA{rXZ}{cWZ}{}{}{}{$\frac{\sqrt{e} s_1}{\sqrt{d}}$}
\cellA{rXZ}{cWY}{}{}{}{$\frac{\sqrt{d} s_2}{\sqrt{e}}$}
\cellA{rX1}{cXY}{}{}{}{$1$}
\cellA{rX1}{cXZ}{}{}{}{$\frac{e}{d}$}
\cellA{rX1}{cX1}{}{}{}{$\frac{1}{d}$}
\cellA{rXY}{cXY}{}{}{}{$r_3$}
\cellA{rXY}{cXZ}{}{}{}{$\frac{e r_2}{d}$}
\cellA{rXY}{cX1}{}{}{}{$\frac{1}{d}$}
\cellA{rXY}{cWZ}{}{}{}{$\frac{\sqrt{e} s_2}{\sqrt{d}}$}
\cellA{rXY}{cWY}{}{}{}{$-\frac{\sqrt{e} s_1}{\sqrt{d}}$}
\cellA{rXY}{cgY}{}{}{}{$1$}
\cellA{rWZ}{cXY}{}{}{}{$\frac{\sqrt{d} s_2}{\sqrt{e}}$}
\cellA{rWZ}{cXZ}{}{}{}{$\frac{\sqrt{e} s_1}{\sqrt{d}}$}
\cellA{rWZ}{cWZ}{}{}{}{$-r_1$}
\cellA{rWZ}{cWY}{}{}{}{$-\frac{d r_2}{e}$}
\cellA{rWY}{cXY}{}{}{}{$-\frac{\sqrt{e} s_1}{\sqrt{d}}$}
\cellA{rWY}{cXZ}{}{}{}{$\frac{\sqrt{e} s_2}{\sqrt{d}}$}
\cellA{rWY}{cWZ}{}{}{}{$-r_2$}
\cellA{rWY}{cWY}{}{}{}{$r_1$}
\cellA{rWY}{cgY}{}{}{}{$-\sigma $}
\cellA{rgY}{cXY}{}{}{}{$1$}
\cellA{rgY}{cWY}{}{}{}{$-\sigma $}
\cellA{rgY}{cgY}{}{}{}{$1$}
\end{tikzpicture}
\end{align*}

Next, we choose elements of the complex gauge group $\mu^{(i)}$ given by the formulas (unspecified entries are all $1$)
% generated by the command 'typesetGaugeSolutionA[1]' in the Mathematica notebook code/connections-and-flat-elements.nb available with the arXiv sources of this article.
\begin{align*}
\mu^{(1)}(1, X) & = \lambda_{1,-2,-1,1}^{(2.25)} & 
 \mu^{(1)}(Z, X) & = \lambda_{1,-2,-1,1}^{(2.25)} & 
 \mu^{(1)}(Z, W) & = \lambda_{13,0,-88,0,-17,0,1}^{(2.637)} \\ 
\mu^{(1)}(Y, X) & = \lambda_{13,-20,9,-1}^{(0.7645)} & 
 \mu^{(1)}(Y, W) & = \lambda_{169,-15,-16,1}^{(0.3244)} & 
 \mu^{(1)}(Y, g) & = \lambda_{13,-25,4,1}^{(1.718)} \\ 
\mu^{(1)}(W, \hat{Z}) & = \lambda_{1,0,-5,0,-22,0,13}^{(2.77)} & 
 \mu^{(1)}(\bar{X}, Y) & = \lambda_{1,1,-16,13}^{(2.94)} &
\mu^{(1)}(\bar{W}, Z) & = \lambda_{13,0,38,0,-45,0,1}^{(0.9413)} \\
 \mu^{(1)}(\bar{g}, Y) & = \lambda_{1,1,-16,13}^{(2.94)} &
 \mu^{(1)}(\hat{Z}, \bar{W}) & = \lambda_{1,0,-5,0,-22,0,13}^{(2.77)} & & 
\end{align*}
and
% generated by the command 'typesetGaugeSolutionA[2]' in the Mathematica notebook code/connections-and-flat-elements.nb available with the arXiv sources of this article.
\begin{align*}
\mu^{(2)}(1, X) & = \lambda_{1,-2,-1,1}^{(2.25)} & 
 \mu^{(2)}(Z, X) & = \lambda_{1,-2,-1,1}^{(2.25)} & 
 \mu^{(2)}(Z, W) & = \lambda_{1,0,-16,0,-29,0,1}^{(4.200)} \\ 
\mu^{(2)}(Y, X) & = \lambda_{1,-6,5,-1}^{(5.049)} & 
 \mu^{(2)}(Y, W) & = \lambda_{1,-15,12,1}^{(14.1)} & 
 \mu^{(2)}(Y, g) & = \lambda_{1,-11,-4,1}^{(11.34)} \\ 
\mu^{(2)}(W, \hat{Z}) & = \lambda_{1,0,-1,0,-2,0,1}^{(0.6671)} & 
 \mu^{(2)}(\bar{X}, Y) & = \lambda_{1,-1,-2,1}^{(0.445)} &
\mu^{(2)}(\bar{W}, Z) & = \lambda_{1,0,-2,0,-1,0,1}^{(1.499)} \\
 \mu^{(2)}(\bar{g}, Y) & = \lambda_{1,-1,-2,1}^{(0.445)}  &
 \mu^{(2)}(\hat{Z}, \bar{W}) & = \lambda_{1,0,-1,0,-2,0,1}^{(0.6671)} & &
\end{align*}

(Here, $\lambda_p^{(x)}$ denotes the root of $\sum p_i x^{n-i}$ which is approximately equal to $x$.)

Applying these, we have  $K^{(1)}_{\text{lopsided}} = \mu^{(1)} (K^{(1)}_{\text{lopsided},0})$  given by
$$
\begin{tikzpicture}[xscale=2, yscale=.8] 
\principalmatrixA 
\longcellA{r1X}{c1X}{}{}{}{$1$}
\longcellA{r1X}{cZX}{}{}{}{$1$}
\longcellA{r1X}{cYX}{}{}{}{$1$}
\longcellA{rZX}{c1X}{}{}{}{$1$}
\longcellA{rZX}{cZW}{}{}{}{$1$}
\longcellA{rZX}{cZX}{}{}{}{$\{-4,21,-5\}$}
\longcellA{rZX}{cYW}{}{}{}{$1$}
\longcellA{rZX}{cYX}{}{}{}{$\{-3,16,-4\}$}
\longcellA{rZW}{cZW}{}{}{}{$\{4,-21,12\}$}
\longcellA{rZW}{cZX}{}{}{}{$1$}
\longcellA{rZW}{cYW}{}{}{}{$\left\{-\frac{11}{13},\frac{61}{13},-\frac{32}{13}\right\}$}
\longcellA{rZW}{cYX}{}{}{}{$1$}
\longcellA{rYg}{cYW}{}{}{}{$\left\{-\frac{11}{13},\frac{61}{13},-\frac{32}{13}\right\}$}
\longcellA{rYg}{cYg}{}{}{}{$1$}
\longcellA{rYg}{cYX}{}{}{}{$1$}
\longcellA{rYX}{c1X}{}{}{}{$1$}
\longcellA{rYX}{cZW}{}{}{}{$1$}
\longcellA{rYX}{cZX}{}{}{}{$\{-3,16,-4\}$}
\longcellA{rYX}{cYW}{}{}{}{$\left\{-\frac{15}{13},\frac{82}{13},-\frac{33}{13}\right\}$}
\longcellA{rYX}{cYg}{}{}{}{$1$}
\longcellA{rYX}{cYX}{}{}{}{$\{-3,16,-5\}$}
\longcellA{rYW}{cZW}{}{}{}{$\left\{-\frac{11}{13},\frac{61}{13},-\frac{32}{13}\right\}$}
\longcellA{rYW}{cZX}{}{}{}{$1$}
\longcellA{rYW}{cYW}{}{}{}{$\left\{\frac{105}{169},-\frac{613}{169},\frac{400}{169}\right\}$}
\longcellA{rYW}{cYg}{}{}{}{$\left\{-\frac{11}{13},\frac{61}{13},-\frac{32}{13}\right\}$}
\longcellA{rYW}{cYX}{}{}{}{$\left\{-\frac{15}{13},\frac{82}{13},-\frac{33}{13}\right\}$}
\end{tikzpicture}
$$
$$
\begin{tikzpicture}[xscale=2, yscale=.8] 
\dualmatrixA 
\longcellA{rXY}{cXY}{}{}{}{$\{2,-11,3\}$}
\longcellA{rXY}{cXZ}{}{}{}{$\{-2,11,-4\}$}
\longcellA{rXY}{cX1}{}{}{}{$1$}
\longcellA{rXY}{cWZ}{}{}{}{$\{-1,6,-3\}$}
\longcellA{rXY}{cWY}{}{}{}{$\left\{\frac{4}{13},-\frac{21}{13},\frac{1}{13}\right\}$}
\longcellA{rXY}{cgY}{}{}{}{$\{1,-5,2\}$}
\longcellA{rXZ}{cXY}{}{}{}{$\{3,-16,5\}$}
\longcellA{rXZ}{cXZ}{}{}{}{$\{-3,16,-6\}$}
\longcellA{rXZ}{cX1}{}{}{}{$1$}
\longcellA{rXZ}{cWZ}{}{}{}{$\{-1,6,-3\}$}
\longcellA{rXZ}{cWY}{}{}{}{$\{1,-5,2\}$}
\longcellA{rX1}{cXY}{}{}{}{$\{1,-5,2\}$}
\longcellA{rX1}{cXZ}{}{}{}{$\{-1,6,-3\}$}
\longcellA{rX1}{cX1}{}{}{}{$1$}
\longcellA{rWY}{cXY}{}{}{}{$\left\{-\frac{12}{13},\frac{63}{13},-\frac{16}{13}\right\}$}
\longcellA{rWY}{cXZ}{}{}{}{$\{1,-5,2\}$}
\longcellA{rWY}{cWZ}{}{}{}{$\left\{-\frac{4}{13},\frac{21}{13},-\frac{14}{13}\right\}$}
\longcellA{rWY}{cWY}{}{}{}{$\left\{-\frac{85}{169},\frac{456}{169},-\frac{187}{169}\right\}$}
\longcellA{rWY}{cgY}{}{}{}{$\left\{-\frac{1}{13},\frac{2}{13},\frac{3}{13}\right\}$}
\longcellA{rWZ}{cXY}{}{}{}{$\{-1,6,-2\}$}
\longcellA{rWZ}{cXZ}{}{}{}{$\{1,-5,2\}$}
\longcellA{rWZ}{cWZ}{}{}{}{$\{3,-13,7\}$}
\longcellA{rWZ}{cWY}{}{}{}{$\left\{-\frac{1}{13},\frac{2}{13},\frac{3}{13}\right\}$}
\longcellA{rgY}{cXY}{}{}{}{$\{0,1,0\}$}
\longcellA{rgY}{cWY}{}{}{}{$\left\{-\frac{5}{13},\frac{23}{13},-\frac{11}{13}\right\}$}
\longcellA{rgY}{cgY}{}{}{}{$\{0,1,0\}$}
\end{tikzpicture}
$$

\vspace{24pt}

and  $K^{(2)}_{\text{lopsided}} = \mu^{(2)} (K^{(2)}_{\text{lopsided},0})$  given by
\begin{align*}
\begin{tikzpicture}[yscale=.8] 
\principalmatrixA 
\cellA{r1X}{c1X}{}{}{}{$1$}
\cellA{r1X}{cZX}{}{}{}{$1$}
\cellA{r1X}{cYX}{}{}{}{$1$}
\cellA{rZX}{c1X}{}{}{}{$1$}
\cellA{rZX}{cZW}{}{}{}{$1$}
\cellA{rZX}{cZX}{}{}{}{$\{2,-11,5\}$}
\cellA{rZX}{cYW}{}{}{}{$1$}
\cellA{rZX}{cYX}{}{}{}{$\{1,-6,4\}$}
\cellA{rZW}{cZW}{}{}{}{$\{2,-11,4\}$}
\cellA{rZW}{cZX}{}{}{}{$1$}
\cellA{rZW}{cYW}{}{}{}{$\{1,-5,2\}$}
\cellA{rZW}{cYX}{}{}{}{$1$}
\cellA{rYW}{cZW}{}{}{}{$\{1,-5,2\}$}
\cellA{rYW}{cZX}{}{}{}{$1$}
\cellA{rYW}{cYW}{}{}{}{$\{1,-5,2\}$}
\cellA{rYW}{cYX}{}{}{}{$\{1,-6,3\}$}
\cellA{rYW}{cYg}{}{}{}{$\{1,-5,2\}$}
\cellA{rYg}{cYW}{}{}{}{$\{1,-5,2\}$}
\cellA{rYg}{cYX}{}{}{}{$1$}
\cellA{rYg}{cYg}{}{}{}{$1$}
\cellA{rYX}{c1X}{}{}{}{$1$}
\cellA{rYX}{cZW}{}{}{}{$1$}
\cellA{rYX}{cZX}{}{}{}{$\{1,-6,4\}$}
\cellA{rYX}{cYW}{}{}{}{$\{1,-6,3\}$}
\cellA{rYX}{cYX}{}{}{}{$\{1,-6,5\}$}
\cellA{rYX}{cYg}{}{}{}{$1$}
\end{tikzpicture}
\begin{tikzpicture}[yscale=.8] 
\dualmatrixA 
\cellA{rXY}{cX1}{}{}{}{$1$}
\cellA{rXY}{cXZ}{}{}{}{$\{-2,11,-6\}$}
\cellA{rXY}{cXY}{}{}{}{$\{2,-11,5\}$}
\cellA{rXY}{cWZ}{}{}{}{$\{-1,6,-3\}$}
\cellA{rXY}{cWY}{}{}{}{$\{0,-1,1\}$}
\cellA{rXY}{cgY}{}{}{}{$\{1,-5,2\}$}
\cellA{rXZ}{cX1}{}{}{}{$1$}
\cellA{rXZ}{cXZ}{}{}{}{$\{-1,6,-4\}$}
\cellA{rXZ}{cXY}{}{}{}{$\{1,-6,3\}$}
\cellA{rXZ}{cWZ}{}{}{}{$\{-1,6,-3\}$}
\cellA{rXZ}{cWY}{}{}{}{$\{1,-5,2\}$}
\cellA{rX1}{cX1}{}{}{}{$1$}
\cellA{rX1}{cXZ}{}{}{}{$\{-1,6,-3\}$}
\cellA{rX1}{cXY}{}{}{}{$\{1,-5,2\}$}
\cellA{rWY}{cXZ}{}{}{}{$\{1,-5,2\}$}
\cellA{rWY}{cXY}{}{}{}{$\{0,-1,0\}$}
\cellA{rWY}{cWZ}{}{}{}{$\{0,1,0\}$}
\cellA{rWY}{cWY}{}{}{}{$\{1,-4,1\}$}
\cellA{rWY}{cgY}{}{}{}{$\{1,-4,1\}$}
\cellA{rWZ}{cXZ}{}{}{}{$\{1,-5,2\}$}
\cellA{rWZ}{cXY}{}{}{}{$\{-1,6,-2\}$}
\cellA{rWZ}{cWZ}{}{}{}{$\{-1,5,-1\}$}
\cellA{rWZ}{cWY}{}{}{}{$\{1,-4,1\}$}
\cellA{rgY}{cXY}{}{}{}{$\{0,1,0\}$}
\cellA{rgY}{cWY}{}{}{}{$\{1,-3,1\}$}
\cellA{rgY}{cgY}{}{}{}{$\{0,1,0\}$}
\end{tikzpicture}
\end{align*}
We use the abbreviation $\{a,b,c\}$ as shorthand for $a d^4 + b d^2 + c$, where as usual $d^2 \approx 5.0489$ is the index.

These may look hideous, but secretly something wonderful has taken place. These explicit connections have all their entries in a fixed (and rather small) algebraic number field. This means that all subsequent calculations (looking for flat elements, particularly) can be efficiently performed by a computer, without the time consuming difficulties of exact arithmetic on arbitrary algebraic numbers.

\subsection{Bi-unitary connections on $\Gamma(\cB)$}\label{biunitaryhex}
In this section, we show that there is a unique bi-unitary connection on $\Gamma(\cB)$, up to gauge equivalence. Moreover, we find a bi-invertible connection in the same complex gauge orbit whose entries lie in $\mathbb{Q}[\delta^2]$. The calculation is straightforward, and does not use flatness in any way.

\label{biunitarysu3}
$$
\begin{tikzpicture}[inner sep=.7mm, yscale=1.7]
\begin{scope}[xshift=-4mm]
\node at (-1,0) {$1$};
\node at (-1,2) {$\hat{1}$};
\node at (-1,4) {$1$};
\node at (0,1) {$\bar{f}$};
\node at (0,3) {$f$};
\node at (1,0) {$A$};
\node at (1,2) {$\hat{H}$};
\node at (1,4) {$A$};
\node at (2,1) {$\bar{B}$};
\node at (2,3) {$B$};
\node at (3,1) {$\bar{F}$};
\node at (3,3) {$F$};
\node at (4,0) {$G$};
\node at (4,2) {$\hat{I}$};
\node at (4,4) {$G$};
\node at (5,0) {$C$};
\node at (5,2) {$\hat{J}$};
\node at (5,4) {$C$};
\node at (6,0) {$E$};
\node at (6,2) {$\hat{K}$};
\node at (6,4) {$E$};
\node at (7,3) {$z$};
\node at (7,1) {$\bar{z}$};
\node at (8,3) {$D$};
\node at (8,1) {$\bar{D}$};
\end{scope}

\node(1) at (-1,0) [circle,fill] {};
\node(h1) at (-1,2) [circle,fill] {};
\node(11) at (-1,4) [circle,fill] {};
\node(bf) at (0,1) [circle,fill] {};
\node(f) at (0,3) [circle,fill] {};
\node(A) at (1,0) [circle,fill] {};
\node(hH) at (1,2) [circle,fill] {};
\node(AA) at (1,4) [circle,fill] {};
\node(bB) at (2,1) [circle,fill] {};
\node(B) at (2,3)  [circle,fill] {};
\node(bF) at (3,1)  [circle,fill] {};
\node(F) at (3,3)  [circle,fill] {};
\node(G) at (4,0)  [circle,fill] {};
\node(hI) at (4,2)  [circle,fill] {};
\node(GG) at (4,4)  [circle,fill] {};
\node(C) at (5,0)  [circle,fill] {};
\node(hJ) at (5,2)  [circle,fill] {};
\node(CC) at (5,4)  [circle,fill] {};
\node(E) at (6,0)  [circle,fill] {};
\node(hK) at (6,2)  [circle,fill] {};
\node(EE) at (6,4)  [circle,fill] {};
\node(z) at (7,3)  [circle,fill] {};
\node(bz) at (7,1)  [circle,fill] {};
\node(D) at (8,3)  [circle,fill] {};
\node(bD) at (8,1)  [circle,fill] {};

\draw (11)--(f)--(AA)--(B)--(CC)--(D)--(EE)--(F)--(AA);
\draw (F)--(GG)--(z);
\draw (h1)--(f)--(hH)--(B)--(hJ)--(D)--(hK)--(F)--(hH);
\draw (F)--(hI)--(z);
\draw (h1)--(bf)--(hH)--(bB)--(hI)--(bD)--(hK)--(bF)--(hH);
\draw (bF)--(hJ)--(bz);
\draw (1)--(bf)--(A)--(bB)--(G)--(bD)--(E)--(bF)--(A);
\draw (bF)--(C)--(bz);
\end{tikzpicture}
$$

Notice that the dual principal matrix has only 1-by-1 and 2-by-2 blocks in it.  Further, each 2-by-2 block contains an entry which is in a 1-by-1 block in the principal matrix; hence, the norms of all the entries in the dual principal (and therefore also principal) matrix are easily determined.  They are the following:

(Here $\lambda_1$, $\lambda_2$, and $\lambda_3$ are 
the roots of $x^6-2x^4-x^2+1$, $x^6-x^4-2x^2+1$ and $x^6+9x^4-x^2-1$ which are approximately $-0.744955, 0.667115$ and $0.619712$, respectively.
As above, we use the abbreviation $\{a,b,c\}$ as shorthand for $a d^4 + b d^2 + c$.)

\begin{equation*}
% generated by the command 'PrincipalConnectionNormMatrix[H, connB, vNshort[H]]' in the Mathematica notebook code/connections-and-flat-elements.nb available with the arXiv sources of this article.
\begin{tikzpicture}[yscale=.8]
\principalmatrixB 
\normcellB{r1f}{c1f}{}{}{}{$1$}
\normcellB{r1f}{cAf}{}{}{}{$1$}
\normcellB{rHf}{c1f}{}{}{}{$1$}
\normcellB{rHf}{cAF}{}{}{}{$-\lambda _1$}
\normcellB{rHf}{cAf}{}{}{}{$\{1,-5,0\}$}
\normcellB{rHf}{cAB}{}{}{}{$\lambda _3$}
\normcellB{rHF}{cAF}{}{}{}{$\{-2,11,-4\}$}
\normcellB{rHF}{cAf}{}{}{}{$-\lambda _1$}
\normcellB{rHF}{cAB}{}{}{}{$\lambda _2 \{-2,11,-4\}$}
\normcellB{rHF}{cCB}{}{}{}{$1$}
\normcellB{rHF}{cEF}{}{}{}{$1$}
\normcellB{rHB}{cAF}{}{}{}{$\lambda _2 \{-2,11,-4\}$}
\normcellB{rHB}{cAf}{}{}{}{$\lambda _3$}
\normcellB{rHB}{cAB}{}{}{}{$\{3,-16,5\}$}
\normcellB{rHB}{cGF}{}{}{}{$1$}
\normcellB{rID}{cGF}{}{}{}{$-\lambda _1$}
\normcellB{rID}{cGz}{}{}{}{$\lambda _2$}
\normcellB{rID}{cEF}{}{}{}{$1$}
\normcellB{rIB}{cAF}{}{}{}{$1$}
\normcellB{rIB}{cGF}{}{}{}{$\lambda _2$}
\normcellB{rIB}{cGz}{}{}{}{$-\lambda _1$}
\normcellB{rJF}{cAB}{}{}{}{$1$}
\normcellB{rJF}{cCB}{}{}{}{$\lambda _2$}
\normcellB{rJF}{cCD}{}{}{}{$-\lambda _1$}
\normcellB{rJF}{cED}{}{}{}{$1$}
\normcellB{rJz}{cCB}{}{}{}{$-\lambda _1$}
\normcellB{rJz}{cCD}{}{}{}{$\lambda _2$}
\normcellB{rKF}{cAF}{}{}{}{$1$}
\normcellB{rKF}{cCD}{}{}{}{$1$}
\normcellB{rKF}{cEF}{}{}{}{$\{-1,6,-4\}$}
\normcellB{rKF}{cED}{}{}{}{$\lambda _1 \{1,-6,4\}$}
\normcellB{rKD}{cGF}{}{}{}{$1$}
\normcellB{rKD}{cEF}{}{}{}{$\lambda _1 \{1,-6,4\}$}
\normcellB{rKD}{cED}{}{}{}{$\{-1,6,-4\}$}
\end{tikzpicture}
\end{equation*}
\begin{equation*}
% generated by the command 'DualConnectionNormMatrix[H, connB, vNshort[H]]' in the Mathematica notebook code/connections-and-flat-elements.nb available with the arXiv sources of this article.
\begin{tikzpicture}[yscale=.8]
\dualmatrixB 
\normcellB{rf1}{cfA}{}{}{}{$\lambda _3 \{2,-11,6\}$}
\normcellB{rf1}{cf1}{}{}{}{$\{2,-11,5\}$}
\normcellB{rfH}{cfA}{}{}{}{$\{2,-11,5\}$}
\normcellB{rfH}{cf1}{}{}{}{$\lambda _3 \{2,-11,6\}$}
\normcellB{rfH}{cBA}{}{}{}{$1$}
\normcellB{rfH}{cFA}{}{}{}{$1$}
\normcellB{rBH}{cfA}{}{}{}{$1$}
\normcellB{rBH}{cBA}{}{}{}{$1$}
\normcellB{rBH}{cFA}{}{}{}{$\{2,-11,5\}$}
\normcellB{rBH}{cFC}{}{}{}{$\lambda _3 \{2,-11,6\}$}
\normcellB{rBJ}{cFA}{}{}{}{$\lambda _3 \{2,-11,6\}$}
\normcellB{rBJ}{cFC}{}{}{}{$\{2,-11,5\}$}
\normcellB{rBJ}{czC}{}{}{}{$1$}
\normcellB{rFH}{cfA}{}{}{}{$1$}
\normcellB{rFH}{cBA}{}{}{}{$\{2,-11,5\}$}
\normcellB{rFH}{cBG}{}{}{}{$\lambda _3 \{2,-11,6\}$}
\normcellB{rFH}{cFA}{}{}{}{$\{-2,11,-4\}$}
\normcellB{rFH}{cFE}{}{}{}{$\lambda _2 \{1,-5,1\}$}
\normcellB{rFK}{cFA}{}{}{}{$\lambda _2 \{1,-5,1\}$}
\normcellB{rFK}{cFE}{}{}{}{$\{-2,11,-4\}$}
\normcellB{rFK}{cDE}{}{}{}{$\{-2,11,-4\}$}
\normcellB{rFK}{cDG}{}{}{}{$\lambda _2 \{1,-5,1\}$}
\normcellB{rFI}{cBA}{}{}{}{$\lambda _3 \{2,-11,6\}$}
\normcellB{rFI}{cBG}{}{}{}{$\{2,-11,5\}$}
\normcellB{rFI}{cDE}{}{}{}{$\lambda _2 \{1,-5,1\}$}
\normcellB{rFI}{cDG}{}{}{}{$\{-2,11,-4\}$}
\normcellB{rzI}{cBG}{}{}{}{$1$}
\normcellB{rzI}{cDG}{}{}{}{$1$}
\normcellB{rDK}{cFC}{}{}{}{$\lambda _2 \{1,-5,1\}$}
\normcellB{rDK}{cFE}{}{}{}{$\{-2,11,-4\}$}
\normcellB{rDK}{cDE}{}{}{}{$1$}
\normcellB{rDJ}{cFC}{}{}{}{$\{-2,11,-4\}$}
\normcellB{rDJ}{cFE}{}{}{}{$\lambda _2 \{1,-5,1\}$}
\normcellB{rDJ}{czC}{}{}{}{$1$}
\end{tikzpicture}
\end{equation*}

From each 2-by-2 block, in either matrix, we get a single equation involving the four phases of that matrix.  Solving these we obtain the following matrices. Here each $\alpha_i$ is an unknown unit complex number. (A computer has made arbitrary choices about which phases to write in terms of others; don't expect to see patterns here.)

$$
\hspace{-1.2cm}
% generated by the command 'PrincipalConnectionMatrix[H, RootReduce[connB2 /. shortNames], vNshort[H], "medium"]' in the Mathematica notebook code/connections-and-flat-elements.nb available with the arXiv sources of this article.
\begin{tikzpicture}[xscale=1.5,yscale=.8]
\principalmatrixB 
\mediumcellB{r1f}{c1f}{}{}{}{$-\frac{\alpha _1 \alpha _2}{\alpha _3}$}
\mediumcellB{r1f}{cAf}{}{}{}{$\alpha _1$}
\mediumcellB{rHf}{c1f}{}{}{}{$\alpha _2$}
\mediumcellB{rHf}{cAf}{}{}{}{$\alpha _3 \{1,-5,0\}$}
\mediumcellB{rHf}{cAF}{}{}{}{$-\alpha _{11} \lambda _1$}
\mediumcellB{rHf}{cAB}{}{}{}{$\alpha _6 \lambda _3$}
\mediumcellB{rHB}{cAf}{}{}{}{$\alpha _4 \lambda _3$}
\mediumcellB{rHB}{cAF}{}{}{}{$\frac{\alpha _{12} \alpha _{14} \alpha _{18} \alpha _{20} \lambda _2 \{2,-11,4\}}{\alpha _{15} \alpha _{17} \alpha _{19}}$}
\mediumcellB{rHB}{cAB}{}{}{}{$\alpha _7 \{3,-16,5\}$}
\mediumcellB{rHB}{cGF}{}{}{}{$\alpha _{12}$}
\mediumcellB{rHF}{cAf}{}{}{}{$-\alpha _5 \lambda _1$}
\mediumcellB{rHF}{cAF}{}{}{}{$\frac{\alpha _{13} \alpha _{16} \alpha _{25} \{-2,11,-4\}}{\alpha _{18} \alpha _{24}}$}
\mediumcellB{rHF}{cAB}{}{}{}{$\frac{\alpha _8 \alpha _9 \alpha _{22} \alpha _{24} \lambda _2 \{2,-11,4\}}{\alpha _{10} \alpha _{21} \alpha _{23}}$}
\mediumcellB{rHF}{cCB}{}{}{}{$\alpha _8$}
\mediumcellB{rHF}{cEF}{}{}{}{$\alpha _{13}$}
\mediumcellB{rID}{cGF}{}{}{}{$\frac{\alpha _{15} \alpha _{17} \lambda _1}{\alpha _{18}}$}
\mediumcellB{rID}{cGz}{}{}{}{$\alpha _{20} \lambda _2$}
\mediumcellB{rID}{cEF}{}{}{}{$\alpha _{15}$}
\mediumcellB{rIB}{cAF}{}{}{}{$\alpha _{14}$}
\mediumcellB{rIB}{cGF}{}{}{}{$\frac{\alpha _{15} \alpha _{17} \alpha _{19} \lambda _2}{\alpha _{18} \alpha _{20}}$}
\mediumcellB{rIB}{cGz}{}{}{}{$-\alpha _{19} \lambda _1$}
\mediumcellB{rJz}{cCB}{}{}{}{$-\alpha _{10} \lambda _1$}
\mediumcellB{rJz}{cCD}{}{}{}{$\alpha _{22} \lambda _2$}
\mediumcellB{rJF}{cAB}{}{}{}{$\alpha _9$}
\mediumcellB{rJF}{cCB}{}{}{}{$\frac{\alpha _{10} \alpha _{21} \alpha _{23} \lambda _2}{\alpha _{22} \alpha _{24}}$}
\mediumcellB{rJF}{cCD}{}{}{}{$\frac{\alpha _{21} \alpha _{23} \lambda _1}{\alpha _{24}}$}
\mediumcellB{rJF}{cED}{}{}{}{$\alpha _{21}$}
\mediumcellB{rKF}{cAF}{}{}{}{$\alpha _{16}$}
\mediumcellB{rKF}{cCD}{}{}{}{$\alpha _{23}$}
\mediumcellB{rKF}{cED}{}{}{}{$\alpha _{24} \lambda _1 \{1,-6,4\}$}
\mediumcellB{rKF}{cEF}{}{}{}{$\frac{\alpha _{18} \alpha _{24} \{1,-6,4\}}{\alpha _{25}}$}
\mediumcellB{rKD}{cGF}{}{}{}{$\alpha _{17}$}
\mediumcellB{rKD}{cED}{}{}{}{$\alpha _{25} \{-1,6,-4\}$}
\mediumcellB{rKD}{cEF}{}{}{}{$\alpha _{18} \lambda _1 \{1,-6,4\}$}
\end{tikzpicture}
$$

$$
\hspace{-1.2cm}
% generated by the command 'DualConnectionMatrix[H, RootReduce[connB2 /. shortNames], vNshort[H], "medium"]' in the Mathematica notebook code/connections-and-flat-elements.nb available with the arXiv sources of this article.
\begin{tikzpicture}[xscale=1.5, yscale=.8]
\dualmatrixB 
\mediumcellB{rf1}{cfA}{}{}{}{$\alpha _1 \lambda _3 \{2,-11,6\}$}
\mediumcellB{rf1}{cf1}{}{}{}{$\frac{\alpha _1 \alpha _2 \{-2,11,-5\}}{\alpha _3}$}
\mediumcellB{rfH}{cfA}{}{}{}{$\alpha _3 \{2,-11,5\}$}
\mediumcellB{rfH}{cf1}{}{}{}{$\alpha _2 \lambda _3 \{2,-11,6\}$}
\mediumcellB{rfH}{cBA}{}{}{}{$\alpha _4$}
\mediumcellB{rfH}{cFA}{}{}{}{$\alpha _5$}
\mediumcellB{rBJ}{cFA}{}{}{}{$\alpha _9 \lambda _3 \{2,-11,6\}$}
\mediumcellB{rBJ}{cFC}{}{}{}{$\frac{\alpha _{10} \alpha _{21} \alpha _{23} \{2,-11,5\}}{\alpha _{22} \alpha _{24}}$}
\mediumcellB{rBJ}{czC}{}{}{}{$\alpha _{10}$}
\mediumcellB{rBH}{cfA}{}{}{}{$\alpha _6$}
\mediumcellB{rBH}{cBA}{}{}{}{$\alpha _7$}
\mediumcellB{rBH}{cFA}{}{}{}{$\frac{\alpha _8 \alpha _9 \alpha _{22} \alpha _{24} \{-2,11,-5\}}{\alpha _{10} \alpha _{21} \alpha _{23}}$}
\mediumcellB{rBH}{cFC}{}{}{}{$\alpha _8 \lambda _3 \{2,-11,6\}$}
\mediumcellB{rFH}{cfA}{}{}{}{$\alpha _{11}$}
\mediumcellB{rFH}{cBA}{}{}{}{$\frac{\alpha _{12} \alpha _{14} \alpha _{18} \alpha _{20} \{-2,11,-5\}}{\alpha _{15} \alpha _{17} \alpha _{19}}$}
\mediumcellB{rFH}{cBG}{}{}{}{$\alpha _{12} \lambda _3 \{2,-11,6\}$}
\mediumcellB{rFH}{cFA}{}{}{}{$\frac{\alpha _{13} \alpha _{16} \alpha _{25} \{-2,11,-4\}}{\alpha _{18} \alpha _{24}}$}
\mediumcellB{rFH}{cFE}{}{}{}{$\alpha _{13} \lambda _2 \{1,-5,1\}$}
\mediumcellB{rFK}{cFA}{}{}{}{$\alpha _{16} \lambda _2 \{1,-5,1\}$}
\mediumcellB{rFK}{cFE}{}{}{}{$\frac{\alpha _{18} \alpha _{24} \{2,-11,4\}}{\alpha _{25}}$}
\mediumcellB{rFK}{cDG}{}{}{}{$\alpha _{17} \lambda _2 \{1,-5,1\}$}
\mediumcellB{rFK}{cDE}{}{}{}{$\alpha _{18} \{-2,11,-4\}$}
\mediumcellB{rFI}{cBA}{}{}{}{$\alpha _{14} \lambda _3 \{2,-11,6\}$}
\mediumcellB{rFI}{cBG}{}{}{}{$\frac{\alpha _{15} \alpha _{17} \alpha _{19} \{2,-11,5\}}{\alpha _{18} \alpha _{20}}$}
\mediumcellB{rFI}{cDG}{}{}{}{$\frac{\alpha _{15} \alpha _{17} \{2,-11,4\}}{\alpha _{18}}$}
\mediumcellB{rFI}{cDE}{}{}{}{$\alpha _{15} \lambda _2 \{1,-5,1\}$}
\mediumcellB{rzI}{cBG}{}{}{}{$\alpha _{19}$}
\mediumcellB{rzI}{cDG}{}{}{}{$\alpha _{20}$}
\mediumcellB{rDK}{cFE}{}{}{}{$\alpha _{24} \{-2,11,-4\}$}
\mediumcellB{rDK}{cFC}{}{}{}{$\alpha _{23} \lambda _2 \{1,-5,1\}$}
\mediumcellB{rDK}{cDE}{}{}{}{$\alpha _{25}$}
\mediumcellB{rDJ}{cFE}{}{}{}{$\alpha _{21} \lambda _2 \{1,-5,1\}$}
\mediumcellB{rDJ}{cFC}{}{}{}{$\frac{\alpha _{21} \alpha _{23} \{2,-11,4\}}{\alpha _{24}}$}
\mediumcellB{rDJ}{czC}{}{}{}{$\alpha _{22}$}
\end{tikzpicture}$$

We still have further constraints:  the phase equations coming from the  three-by-three matrix in the principal graph connection.  The equations coming from orthogonality of columns are all of the form
$$m_1 \phi_1 + m_2 \phi_2 + m_3 \phi_3=0, \qquad m_i \in \mathbb{R}_+, \qquad \phi_i \in \mathbb{T}.$$ 
In each of these equations, we find that two of the $m_i$ sum to the remaining third; thus the corresponding equations on phases are of the form $\phi_i = \phi_j = - \phi_k$.  
So the orthogonality of the first and third columns let us determine $\alpha_3$ and $\alpha_5$ in terms of the phases of the remaining entries;
similarly the orthogonality of the second and third columns give us expressions for $\alpha_6$ and $\alpha_7$.  The connection is then:

\begin{figure}
\hspace*{-1.5cm}
\rotatebox{90}{
% generated by the command 'PrincipalConnectionMatrix[H, connB3, vNshort[H], "long"]' in the Mathematica notebook code/connections-and-flat-elements.nb available with the arXiv sources of this article.
\begin{tikzpicture}[xscale=2.3, yscale=.8]
\principalmatrixB 
\longcellB{r1f}{c1f}{}{}{}{$\frac{\alpha _1 \alpha _2 \alpha _{12} \alpha _{14} \alpha _{18} \alpha _{20}}{\alpha _4 \alpha _{11} \alpha _{15} \alpha _{17} \alpha _{19}}$}
\longcellB{r1f}{cAf}{}{}{}{$\alpha _1$}
\longcellB{rHf}{c1f}{}{}{}{$\alpha _2$}
\longcellB{rHf}{cAB}{}{}{}{$\frac{\alpha _8 \alpha _9 \alpha _{11} \alpha _{18} \alpha _{22} \alpha _{24}^2 \lambda _3}{\alpha _{10} \alpha _{13} \alpha _{16} \alpha _{21} \alpha _{23} \alpha _{25}}$}
\longcellB{rHf}{cAF}{}{}{}{$-\alpha _{11} \lambda _1$}
\longcellB{rHf}{cAf}{}{}{}{$\frac{\alpha _4 \alpha _{11} \alpha _{15} \alpha _{17} \alpha _{19} \{-1,5,0\}}{\alpha _{12} \alpha _{14} \alpha _{18} \alpha _{20}}$}
\longcellB{rHF}{cAB}{}{}{}{$\frac{\alpha _8 \alpha _9 \alpha _{22} \alpha _{24} \lambda _2 \{2,-11,4\}}{\alpha _{10} \alpha _{21} \alpha _{23}}$}
\longcellB{rHF}{cAF}{}{}{}{$\frac{\alpha _{13} \alpha _{16} \alpha _{25} \{-2,11,-4\}}{\alpha _{18} \alpha _{24}}$}
\longcellB{rHF}{cAf}{}{}{}{$-\frac{\alpha _4 \alpha _{13} \alpha _{15} \alpha _{16} \alpha _{17} \alpha _{19} \alpha _{25} \lambda _1}{\alpha _{12} \alpha _{14} \alpha _{18}^2 \alpha _{20} \alpha _{24}}$}
\longcellB{rHF}{cCB}{}{}{}{$\alpha _8$}
\longcellB{rHF}{cEF}{}{}{}{$\alpha _{13}$}
\longcellB{rHB}{cAB}{}{}{}{$\frac{\alpha _8 \alpha _9 \alpha _{12} \alpha _{14} \alpha _{18}^2 \alpha _{20} \alpha _{22} \alpha _{24}^2 \{3,-16,5\}}{\alpha _{10} \alpha _{13} \alpha _{15} \alpha _{16} \alpha _{17} \alpha _{19} \alpha _{21} \alpha _{23} \alpha _{25}}$}
\longcellB{rHB}{cAF}{}{}{}{$\frac{\alpha _{12} \alpha _{14} \alpha _{18} \alpha _{20} \lambda _2 \{2,-11,4\}}{\alpha _{15} \alpha _{17} \alpha _{19}}$}
\longcellB{rHB}{cAf}{}{}{}{$\alpha _4 \lambda _3$}
\longcellB{rHB}{cGF}{}{}{}{$\alpha _{12}$}
\longcellB{rID}{cGz}{}{}{}{$\alpha _{20} \lambda _2$}
\longcellB{rID}{cGF}{}{}{}{$\frac{\alpha _{15} \alpha _{17} \lambda _1}{\alpha _{18}}$}
\longcellB{rID}{cEF}{}{}{}{$\alpha _{15}$}
\longcellB{rIB}{cAF}{}{}{}{$\alpha _{14}$}
\longcellB{rIB}{cGz}{}{}{}{$-\alpha _{19} \lambda _1$}
\longcellB{rIB}{cGF}{}{}{}{$\frac{\alpha _{15} \alpha _{17} \alpha _{19} \lambda _2}{\alpha _{18} \alpha _{20}}$}
\longcellB{rJz}{cCB}{}{}{}{$-\alpha _{10} \lambda _1$}
\longcellB{rJz}{cCD}{}{}{}{$\alpha _{22} \lambda _2$}
\longcellB{rJF}{cAB}{}{}{}{$\alpha _9$}
\longcellB{rJF}{cCB}{}{}{}{$\frac{\alpha _{10} \alpha _{21} \alpha _{23} \lambda _2}{\alpha _{22} \alpha _{24}}$}
\longcellB{rJF}{cCD}{}{}{}{$\frac{\alpha _{21} \alpha _{23} \lambda _1}{\alpha _{24}}$}
\longcellB{rJF}{cED}{}{}{}{$\alpha _{21}$}
\longcellB{rKF}{cAF}{}{}{}{$\alpha _{16}$}
\longcellB{rKF}{cCD}{}{}{}{$\alpha _{23}$}
\longcellB{rKF}{cED}{}{}{}{$\alpha _{24} \lambda _1 \{1,-6,4\}$}
\longcellB{rKF}{cEF}{}{}{}{$\frac{\alpha _{18} \alpha _{24} \{1,-6,4\}}{\alpha _{25}}$}
\longcellB{rKD}{cGF}{}{}{}{$\alpha _{17}$}
\longcellB{rKD}{cED}{}{}{}{$\alpha _{25} \{-1,6,-4\}$}
\longcellB{rKD}{cEF}{}{}{}{$\alpha _{18} \lambda _1 \{1,-6,4\}$}
\begin{scope}[yshift=-12cm]
% generated by the command 'DualConnectionMatrix[H, connB3, vNshort[H], "long"]' in the Mathematica notebook code/connections-and-flat-elements.nb available with the arXiv sources of this article.
\dualmatrixB 
\longcellB{rf1}{cf1}{}{}{}{$\frac{\alpha _1 \alpha _2 \alpha _{12} \alpha _{14} \alpha _{18} \alpha _{20} \{2,-11,5\}}{\alpha _4 \alpha _{11} \alpha _{15} \alpha _{17} \alpha _{19}}$}
\longcellB{rf1}{cfA}{}{}{}{$\alpha _1 \lambda _3 \{2,-11,6\}$}
\longcellB{rfH}{cf1}{}{}{}{$\alpha _2 \lambda _3 \{2,-11,6\}$}
\longcellB{rfH}{cfA}{}{}{}{$\frac{\alpha _4 \alpha _{11} \alpha _{15} \alpha _{17} \alpha _{19} \{-2,11,-5\}}{\alpha _{12} \alpha _{14} \alpha _{18} \alpha _{20}}$}
\longcellB{rfH}{cBA}{}{}{}{$\alpha _4$}
\longcellB{rfH}{cFA}{}{}{}{$\frac{\alpha _4 \alpha _{13} \alpha _{15} \alpha _{16} \alpha _{17} \alpha _{19} \alpha _{25}}{\alpha _{12} \alpha _{14} \alpha _{18}^2 \alpha _{20} \alpha _{24}}$}
\longcellB{rBH}{cfA}{}{}{}{$\frac{\alpha _8 \alpha _9 \alpha _{11} \alpha _{18} \alpha _{22} \alpha _{24}^2}{\alpha _{10} \alpha _{13} \alpha _{16} \alpha _{21} \alpha _{23} \alpha _{25}}$}
\longcellB{rBH}{cBA}{}{}{}{$\frac{\alpha _8 \alpha _9 \alpha _{12} \alpha _{14} \alpha _{18}^2 \alpha _{20} \alpha _{22} \alpha _{24}^2}{\alpha _{10} \alpha _{13} \alpha _{15} \alpha _{16} \alpha _{17} \alpha _{19} \alpha _{21} \alpha _{23} \alpha _{25}}$}
\longcellB{rBH}{cFC}{}{}{}{$\alpha _8 \lambda _3 \{2,-11,6\}$}
\longcellB{rBH}{cFA}{}{}{}{$\frac{\alpha _8 \alpha _9 \alpha _{22} \alpha _{24} \{-2,11,-5\}}{\alpha _{10} \alpha _{21} \alpha _{23}}$}
\longcellB{rBJ}{cFC}{}{}{}{$\frac{\alpha _{10} \alpha _{21} \alpha _{23} \{2,-11,5\}}{\alpha _{22} \alpha _{24}}$}
\longcellB{rBJ}{cFA}{}{}{}{$\alpha _9 \lambda _3 \{2,-11,6\}$}
\longcellB{rBJ}{czC}{}{}{}{$\alpha _{10}$}
\longcellB{rFH}{cfA}{}{}{}{$\alpha _{11}$}
\longcellB{rFH}{cBA}{}{}{}{$\frac{\alpha _{12} \alpha _{14} \alpha _{18} \alpha _{20} \{-2,11,-5\}}{\alpha _{15} \alpha _{17} \alpha _{19}}$}
\longcellB{rFH}{cBG}{}{}{}{$\alpha _{12} \lambda _3 \{2,-11,6\}$}
\longcellB{rFH}{cFA}{}{}{}{$\frac{\alpha _{13} \alpha _{16} \alpha _{25} \{-2,11,-4\}}{\alpha _{18} \alpha _{24}}$}
\longcellB{rFH}{cFE}{}{}{}{$\alpha _{13} \lambda _2 \{1,-5,1\}$}
\longcellB{rFK}{cFA}{}{}{}{$\alpha _{16} \lambda _2 \{1,-5,1\}$}
\longcellB{rFK}{cFE}{}{}{}{$\frac{\alpha _{18} \alpha _{24} \{2,-11,4\}}{\alpha _{25}}$}
\longcellB{rFK}{cDG}{}{}{}{$\alpha _{17} \lambda _2 \{1,-5,1\}$}
\longcellB{rFK}{cDE}{}{}{}{$\alpha _{18} \{-2,11,-4\}$}
\longcellB{rFI}{cBA}{}{}{}{$\alpha _{14} \lambda _3 \{2,-11,6\}$}
\longcellB{rFI}{cBG}{}{}{}{$\frac{\alpha _{15} \alpha _{17} \alpha _{19} \{2,-11,5\}}{\alpha _{18} \alpha _{20}}$}
\longcellB{rFI}{cDG}{}{}{}{$\frac{\alpha _{15} \alpha _{17} \{2,-11,4\}}{\alpha _{18}}$}
\longcellB{rFI}{cDE}{}{}{}{$\alpha _{15} \lambda _2 \{1,-5,1\}$}
\longcellB{rzI}{cBG}{}{}{}{$\alpha _{19}$}
\longcellB{rzI}{cDG}{}{}{}{$\alpha _{20}$}
\longcellB{rDJ}{cFC}{}{}{}{$\frac{\alpha _{21} \alpha _{23} \{2,-11,4\}}{\alpha _{24}}$}
\longcellB{rDJ}{cFE}{}{}{}{$\alpha _{21} \lambda _2 \{1,-5,1\}$}
\longcellB{rDJ}{czC}{}{}{}{$\alpha _{22}$}
\longcellB{rDK}{cFC}{}{}{}{$\alpha _{23} \lambda _2 \{1,-5,1\}$}
\longcellB{rDK}{cFE}{}{}{}{$\alpha _{24} \{-2,11,-4\}$}
\longcellB{rDK}{cDE}{}{}{}{$\alpha _{25}$}
\end{scope}
\end{tikzpicture}}
\end{figure}

We now pass to the lopsided planar algebra using the map $\natural$, and choose an element of the complex gauge group which simultaneously demonstrates that the connections shown above all form a single gauge orbit, and gives us a nice representative with all entries in the number field $\mathbb{Q}[d^2]$.  Gauge entries that are not specified below are all equal to $1$.

% generated by the command 'typesetGaugeSolutionB' in the Mathematica notebook code/connections-and-flat-elements.nb available with the arXiv sources of this article.
\begin{align*}
\mu(1, f) & = \frac{\alpha _4 \alpha _{11} \alpha _{15} \alpha _{17} \alpha _{19} \lambda_{1,-2,-1,1}^{(2.25)}}{\alpha _1 \alpha _2 \alpha _{12} \alpha _{14} \alpha _{18} \alpha _{20}} & 
 \mu(A, f) & = \frac{\lambda_{1,-2,-1,1}^{(2.25)}}{\alpha _1} \\ 
\mu(A, B) & = \frac{\alpha _{10} \alpha _{13} \alpha _{16} \alpha _{21} \alpha _{23} \alpha _{25} \lambda_{1,-5,6,-1}^{(3.25)}}{\alpha _8 \alpha _9 \alpha _{11} \alpha _{18} \alpha _{22} \alpha _{24}^2} & 
 \mu(A, F) & = \frac{\lambda_{1,-2,-1,1}^{(2.25)}}{\alpha _{11}} \\ 
\mu(C, B) & = \frac{\alpha _{13} \lambda_{1,0,-4,0,3,0,1}^{(1.202)}}{\alpha _8} & 
 \mu(C, D) & = \frac{\alpha _{24} \lambda_{1,3,-4,1}^{(-4.049)}}{\alpha _{21} \alpha _{23}} \\ 
\mu(G, F) & = \frac{\lambda_{1,0,-1,0,-9,0,1}^{(1.869)}}{\alpha _{12}} & 
 \mu(G, z) & = \frac{\alpha _{17} \alpha _{25} \lambda_{1,0,-335,0,-44,0,-1}^{(-18.31)}}{\alpha _{12} \alpha _{18} \alpha _{20} \alpha _{24}} \\ 
 \mu(E, D) & = \frac{\lambda_{1,0,-9,0,-1,0,1}^{(3.016)}}{\alpha _{21}} \
&
 \mu(f, \hat{H}) & = \frac{\alpha _1 \alpha _{12} \alpha _{14} \alpha _{18} \alpha _{20}}{\alpha _4 \alpha _{11} \alpha _{15} \alpha _{17} \alpha _{19}} \\ 
 \mu(B, \hat{J}) & = \frac{\alpha _8 \alpha _{22} \alpha _{24} \lambda_{1,0,12,0,-15,0,1}^{(1.037)}}{\alpha _{10} \alpha _{13} \alpha _{21} \alpha _{23}} 
&
 \mu(F, \hat{I}) & = \frac{\alpha _{25} \lambda_{1,0,-4,0,3,0,1}^{(-1.674)}}{\alpha _{15} \alpha _{24}} \\ 
\mu(F, \hat{K}) & = \frac{\alpha _{25} \lambda_{1,1,-2,-1}^{(1.25)}}{\alpha _{18} \alpha _{24}} & 
 \mu(D, \hat{K}) & = \frac{\alpha _{21} \lambda_{1,0,-2,0,-1,0,1}^{(-0.7450)}}{\alpha _{24}} \\ 
\mu(\bar{B}, A) & = \frac{\alpha _{11} \alpha _{15} \alpha _{17} \alpha _{19} \lambda_{1,-1,-2,1}^{(1.80)}}{\alpha _{12} \alpha _{14} \alpha _{18} \alpha _{20}} & 
\mu(\bar{F}, A) & = \frac{\alpha _{11} \alpha _{18} \alpha _{24} \lambda_{1,2,-1,-1}^{(0.802)}}{\alpha _{16} \alpha _{25}} \\ 
 \mu(\bar{z}, C) & = \frac{\alpha _{21} \alpha _{23} \lambda_{1,-4,3,1}^{(1.45)}}{\alpha _{22} \alpha _{24}} &
\mu(\bar{D}, G) & = \frac{\alpha _{12} \alpha _{18} \alpha _{24} \lambda_{1,0,11,0,-4,0,-1}^{(0.7181)}}{\alpha _{17} \alpha _{25}} \\ 
 \mu(\bar{D}, E) & = -\frac{\alpha _{24}}{\alpha _{25}} &
 \mu(\hat{H}, \bar{F}) & = \frac{\lambda_{1,-2,-1,1}^{(2.25)}}{\alpha _{13}} \\ 
\mu(\hat{I}, \bar{B}) & = \frac{\alpha _{12} \alpha _{18} \alpha _{20} \alpha _{24} \lambda_{1,0,15,0,12,0,-1}^{(-0.2758)}}{\alpha _{17} \alpha _{19} \alpha _{25}} & 
\end{align*}

We call this bi-invertible connection $K^B_{\text{lopsided}}$.
$$
% generated by the command 'PrincipalConnectionMatrix[H, connB4, vNshort[H]]' in the Mathematica notebook code/connections-and-flat-elements.nb available with the arXiv sources of this article.
\begin{tikzpicture}[yscale=.8]
\principalmatrixB 
\cellB{r1f}{c1f}{}{}{}{$1$}
\cellB{r1f}{cAf}{}{}{}{$1$}
\cellB{rHF}{cAB}{}{}{}{$\{1,-6,4\}$}
\cellB{rHF}{cAf}{}{}{}{$1$}
\cellB{rHF}{cAF}{}{}{}{$1$}
\cellB{rHF}{cCB}{}{}{}{$1$}
\cellB{rHF}{cEF}{}{}{}{$1$}
\cellB{rHB}{cAB}{}{}{}{$\{-1,6,-3\}$}
\cellB{rHB}{cAf}{}{}{}{$1$}
\cellB{rHB}{cAF}{}{}{}{$\{1,-6,4\}$}
\cellB{rHB}{cGF}{}{}{}{$1$}
\cellB{rHf}{c1f}{}{}{}{$1$}
\cellB{rHf}{cAB}{}{}{}{$1$}
\cellB{rHf}{cAf}{}{}{}{$\{-1,5,0\}$}
\cellB{rHf}{cAF}{}{}{}{$1$}
\cellB{rID}{cGz}{}{}{}{$\{2,-12,7\}$}
\cellB{rID}{cGF}{}{}{}{$1$}
\cellB{rID}{cEF}{}{}{}{$1$}
\cellB{rIB}{cAF}{}{}{}{$1$}
\cellB{rIB}{cGz}{}{}{}{$1$}
\cellB{rIB}{cGF}{}{}{}{$\{-3,16,-4\}$}
\cellB{rJz}{cCB}{}{}{}{$1$}
\cellB{rJz}{cCD}{}{}{}{$\{2,-12,7\}$}
\cellB{rJF}{cAB}{}{}{}{$1$}
\cellB{rJF}{cCB}{}{}{}{$\{-3,16,-4\}$}
\cellB{rJF}{cCD}{}{}{}{$1$}
\cellB{rJF}{cED}{}{}{}{$1$}
\cellB{rKF}{cAF}{}{}{}{$1$}
\cellB{rKF}{cCD}{}{}{}{$1$}
\cellB{rKF}{cED}{}{}{}{$\{-2,11,-5\}$}
\cellB{rKF}{cEF}{}{}{}{$\{-2,11,-5\}$}
\cellB{rKD}{cGF}{}{}{}{$1$}
\cellB{rKD}{cED}{}{}{}{$\{-1,6,-4\}$}
\cellB{rKD}{cEF}{}{}{}{$\{-2,11,-5\}$}
\end{tikzpicture}$$
$$
% generated by the command 'DualConnectionMatrix[H, connB4, vNshort[H]]' in the Mathematica notebook code/connections-and-flat-elements.nb available with the arXiv sources of this article.
\begin{tikzpicture}[yscale=.8]
\dualmatrixB 
\cellB{rfH}{cf1}{}{}{}{$1$}
\cellB{rfH}{cfA}{}{}{}{$-1$}
\cellB{rfH}{cBA}{}{}{}{$\{0,1,-1\}$}
\cellB{rfH}{cFA}{}{}{}{$\{0,1,-1\}$}
\cellB{rf1}{cf1}{}{}{}{$1$}
\cellB{rf1}{cfA}{}{}{}{$\{0,1,-1\}$}
\cellB{rBJ}{cFA}{}{}{}{$\{1,-5,3\}$}
\cellB{rBJ}{cFC}{}{}{}{$\{-2,11,-4\}$}
\cellB{rBJ}{czC}{}{}{}{$\{-1,6,-3\}$}
\cellB{rBH}{cfA}{}{}{}{$\{1,-5,3\}$}
\cellB{rBH}{cBA}{}{}{}{$\{-1,7,-4\}$}
\cellB{rBH}{cFA}{}{}{}{$\{2,-12,7\}$}
\cellB{rBH}{cFC}{}{}{}{$\{-1,6,-3\}$}
\cellB{rFH}{cfA}{}{}{}{$\{1,-5,2\}$}
\cellB{rFH}{cBG}{}{}{}{$\{1,-5,1\}$}
\cellB{rFH}{cBA}{}{}{}{$\{1,-6,3\}$}
\cellB{rFH}{cFA}{}{}{}{$\{1,-5,2\}$}
\cellB{rFH}{cFE}{}{}{}{$\{-2,11,-3\}$}
\cellB{rFK}{cFA}{}{}{}{$\{1,-5,2\}$}
\cellB{rFK}{cFE}{}{}{}{$\{-3,16,-5\}$}
\cellB{rFK}{cDE}{}{}{}{$\{-3,16,-5\}$}
\cellB{rFK}{cDG}{}{}{}{$\{1,-5,1\}$}
\cellB{rFI}{cBG}{}{}{}{$\{6,-32,9\}$}
\cellB{rFI}{cBA}{}{}{}{$\{1,-5,2\}$}
\cellB{rFI}{cDE}{}{}{}{$\{-2,11,-3\}$}
\cellB{rFI}{cDG}{}{}{}{$\{1,-5,1\}$}
\cellB{rzI}{cBG}{}{}{}{$\{0,1,0\}$}
\cellB{rzI}{cDG}{}{}{}{$\{0,-3,2\}$}
\cellB{rDJ}{cFC}{}{}{}{$\{1,-5,2\}$}
\cellB{rDJ}{cFE}{}{}{}{$\{-1,6,-2\}$}
\cellB{rDJ}{czC}{}{}{}{$\{1,-7,4\}$}
\cellB{rDK}{cFC}{}{}{}{$\{1,-5,2\}$}
\cellB{rDK}{cFE}{}{}{}{$\{-1,5,-1\}$}
\cellB{rDK}{cDE}{}{}{}{$\{1,-5,2\}$}
\end{tikzpicture}$$

 \section{Flat low weight vectors}
 \label{flatlowweight}
This section contains the final ingredients necessary in the proofs of two of our main theorems:

 \begin{thm}\label{thm:uniquehat}
 There is a unique subfactor with principal graph $\Gamma(\cA)$.
 \end{thm}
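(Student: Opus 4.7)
The plan is to reduce the uniqueness question to a uniqueness question about flat connections, using Theorem \ref{thm:spa-and-flat-conns}, and then cut down the two candidates from Section \ref{biunitaryhat} to a single one by a direct search for flat low-weight vectors in the graph planar algebra. By Theorem \ref{thm:spa-and-flat-conns} every subfactor with principal graph $\Gamma(\cA)$ arises as the flat subalgebra of some flat bi-unitary connection on the four-partite graph of $\Gamma(\cA)$, and by the remark after Lemma \ref{lem:flat-elements-transform} gauge-equivalent connections produce isomorphic subfactor planar algebras. So the task is to show that exactly one of the two gauge classes $K^{(1)},K^{(2)}$ from Section \ref{biunitaryhat} is flat: the existence of $\cA$ guarantees at least one is flat, and we must rule out the other.

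The strategy is to look for flat low-weight vectors at the first depth past the branch, i.e.\ in the 2-box space $P_{2,+}$ of $\cG(\Gamma(\cA))$, since by Fact \ref{fact:flat-subalg} a 1-supertransitive principal graph admits a flat connection if and only if such a vector exists. To make the computation tractable I first pass to the lopsided representatives $K^{(i)}_{\text{lopsided}}$ constructed in Section \ref{biunitaryhat}, whose entries lie in the small number field $\bbQ[d^2]$. Because $\natural$ intertwines the actions (cf.\ Equation~\eqref{eq:intertwiner2}) it restricts to an isomorphism on each rotational eigenspace, so flat low-weight vectors in the spherical setting correspond exactly to flat low-weight vectors in the lopsided one; and by Lemma \ref{lem:flat-elements-transform} the question of existence is invariant under the complex gauge action used to obtain $K^{(i)}_{\text{lopsided}}$.

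Concretely, I would expand a generic element of $P_{2,+}$ as a $\bbQ[d^2]$-linear combination of the (finitely many) length-2 loops on $\Gamma(\cA)$, impose the low-weight condition (annihilation by both cups) as a linear system, and then impose the flatness condition by requiring the equation $\text{(four-box picture)}_{\text{above}} = \text{(four-box picture)}_{\text{below}}$ from the definition in Section~2 to hold against $K^{(i)}_{\text{lopsided}}$. Both conditions are $\bbQ[d^2]$-linear in the unknown coefficients, so the answer is obtained by exact linear algebra over $\bbQ[d^2]$, which a computer can do reliably. The plan is to show that for $K^{(1)}$ this system has a 1-dimensional solution space (the expected ``branch'' generator), while for $K^{(2)}$ the only solutions are Temperley-Lieb (equivalently, there are no non-trivial flat low-weight 2-boxes). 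Then $K^{(2)}$ is not flat; $K^{(1)}$ is flat, and by Fact \ref{fact:flat-subalg} its flat subalgebra is 1-supertransitive with excess 1; since its index lies in $(5,3+\sqrt5)$, Theorem \ref{thm:only1ST} forces the principal graph to be $\Gamma(\cA)$. Uniqueness of the subfactor follows.

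The main obstacle is the flatness computation itself. Spherically, even writing down the flatness equations involves square roots of ratios of Frobenius--Perron dimensions and produces an intractable number field; the whole reason for the lopsided apparatus of Section~1 is to reduce these equations to exact arithmetic over $\bbQ[d^2]$, after which the linear algebra is routine but lengthy, and is best verified by the Mathematica code accompanying the paper. The conceptual subtlety is that, in principle, $K^{(2)}$ could be ``flat at higher depth'' without supporting a flat 2-box, but Fact \ref{fact:flat-subalg} rules this out for a 1-supertransitive principal graph: the absence of non-Temperley-Lieb flat 2-boxes already implies non-flatness of the connection.
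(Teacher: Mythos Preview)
Your approach is essentially the paper's: reduce via Theorem~\ref{thm:spa-and-flat-conns} to the two gauge classes from \S\ref{biunitaryhat}, pass to the lopsided representatives over $\bbQ[d^2]$, and solve the linear system for flat low-weight 2-boxes. One correction: you have the labels swapped --- in the paper it is $K^{(1)}_{\text{lopsided}}$ that admits \emph{no} flat low-weight 2-box (Theorem~\ref{thm:flat-generators-A1}) and $K^{(2)}_{\text{lopsided}}$ that is flat (Theorem~\ref{thm:flat-generators-A2}). The paper also restricts at the outset to rotational eigenvalue $+1$, since both depth-2 vertices of $\Gamma(\cA)$ are self-dual; you omit this, but it only affects efficiency, not correctness.

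There is one small difference in how the argument closes. You plan to verify directly that the surviving connection has a flat low-weight 2-box, then invoke Fact~\ref{fact:flat-subalg} and Theorem~\ref{thm:only1ST} to pin down the principal graph of the flat subalgebra as $\Gamma(\cA)$. The paper instead argues more cheaply: the quantum-group subfactor $\cA$ exists, so by Theorem~\ref{thm:spa-and-flat-conns} \emph{some} connection on $\Gamma(\cA)$ is flat; by \S\ref{biunitaryhat} it must be gauge-equivalent to $K^{(1)}$ or $K^{(2)}$, and since $K^{(1)}$ is ruled out, it is $K^{(2)}$. This avoids needing Theorem~\ref{thm:only1ST} in the uniqueness proof and does not require exhibiting the flat element (though the paper does so anyway in Theorem~\ref{thm:flat-generators-A2}). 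Your route also works and is exactly what the paper sketches in its introduction.
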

  
 \begin{thm}\label{thm:uniquehex}
 There is a unique subfactor with principal graph $\Gamma(\cB)$.
 \end{thm}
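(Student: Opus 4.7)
The plan is to combine the uniqueness result from \S\ref{biunitaryhex} with Theorem \ref{thm:spa-and-flat-conns} and the existence of the quantum-group subfactor $\cB$. In \S\ref{biunitaryhex} we have already shown that every bi-unitary connection on $\Gamma(\cB)$ is gauge equivalent to the single explicit connection $K^B_{\text{lopsided}}$. Together with the standard fact noted after Lemma \ref{lem:flat-elements-transform}, that gauge equivalent connections have isomorphic flat subalgebras (with the isomorphism implemented by $\operatorname{Alt}(g)$), this will give us uniqueness of the flat subalgebra on the nose.

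The argument then runs as follows. Let $P$ be any subfactor planar algebra with principal graph $\Gamma(\cB)$. By Theorem \ref{thm:spa-and-flat-conns}, $P$ yields a bi-unitary connection $K(P) \in \cG(\Gamma(\cB))_\zeta$, and $P$ is realized as the flat subalgebra of $K(P)$ inside the two-sided graph planar algebra $\cG(\Gamma(\cB))$. By the uniqueness statement of \S\ref{biunitaryhex}, $K(P)$ is gauge equivalent to $K^B_{\text{lopsided}}$, and hence the flat subalgebras coincide up to isomorphism. Therefore any two subfactors with principal graph $\Gamma(\cB)$ have isomorphic standard invariants, i.e.\ there is \emph{at most} one such subfactor.

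To conclude, we invoke the existence half: the quantum-group subfactor $\cB = \cQ(\mathfrak{su}_3, V_{(1,0)}, 14)$ described in the introduction has principal graph $\Gamma(\cB)$ (by \cite{MR1470857}, as recalled at the start of the paper). This shows the unique gauge orbit of bi-unitary connections on $\Gamma(\cB)$ is in fact flat, and that the single flat subalgebra is precisely the standard invariant of $\cB$.

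The only step that requires real work has already been carried out in \S\ref{biunitaryhex}, namely the gauge-fixing and phase analysis that cuts the moduli of bi-unitary connections down to a single point. Compared with the $\Gamma(\cA)$ case, no separate search for flat low-weight vectors is needed here, because uniqueness of the connection is combined with an a priori existence result rather than being used to identify a candidate planar algebra; this is the reason Theorem \ref{thm:uniquehex} has a considerably shorter proof than Theorem \ref{thm:uniquehat}.
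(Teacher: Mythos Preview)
Your proof is correct and follows essentially the same route as the paper: uniqueness of the bi-unitary connection on $\Gamma(\cB)$ from \S\ref{biunitaryhex}, together with Theorem \ref{thm:spa-and-flat-conns} and the existence of the quantum-group subfactor $\cB$, yields exactly one subfactor. The paper additionally cites Theorem \ref{thm:flat-generators-B} to describe the flat subalgebra explicitly, but as you note this is not needed for the uniqueness statement itself.
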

 
 \begin{proof}[Proof of Theorem \ref{thm:uniquehat}]
 In \S \ref{biunitaryhat} we showed that there are two biunitary connections on $\cA$  which pass the branch-point eigenvalue test.
 
 Since the two vertices at depth 2 in $\Gamma(\cA)$ are self-dual, a flat low weigth $2$-box must have rotational eigenvalue $+1$.
 
 Theorem \ref{thm:flat-generators-A1} shows that $K^{(1)}_{\text{lopsided}}$ has no flat, low-weight $2$-boxes with eigenvalue $+1$, hence by Theorem \ref{thm:spa-and-flat-conns} this connection is not flat.  
 
Thus there is at most one gauge equivalence class of flat connections on the principal graph for $\cA$, and $K^{(2)}_{\text{lopsided}}$ provides a representative of its complex gauge group orbit. Since we know the subfactor exists (easily constructed from quantum groups), this connection must actually be flat by Theorem \ref{thm:spa-and-flat-conns}. Theorem \ref{thm:flat-generators-A2} explicitly describes the flat subalgebra.
 \end{proof}

\begin{proof}[Proof of Theorem \ref{thm:uniquehex}]
In \S \ref{biunitaryhex} we showed that there is one biunitary connection on $\cB$ which passes the branch-point eigenvalue test.  Since we know the subfactor exists (easily constructed from quantum groups), this connection must actually be flat by Theorem \ref{thm:spa-and-flat-conns}. Theorem \ref{thm:flat-generators-B} explicitly describes the flat subalgebra.
\end{proof}
 
 The work in this section consists of calculating the flat low weight vectors in the graph planar algebra, for each of the three connections (two on $\Gamma(\cA)$ and one on $\Gamma(\cB)$) described above.
 
 In order to do this, we will need to write down rather complicated elements of the graph planar algebra. We express these as a collection of matrices; the usual multiplication structure on a graph planar algebra space $\cG(\Gamma)_{n,+}$ breaks up as a direct sum of matrix algebras. Each matrix algebra is indexed by a pair of even vertices $a, b$ on $\Gamma$, and the rows and columns are indexed by paths of length $n$ from $a$ to $b$. Thus if $\pi$ and $\rho$ are paths on $\Gamma$ from $a$ to $b$, we denote by $ \bar{\rho}\pi$ the concatenation of $\pi$ with the reverse of $\rho$, a loop based at $a$, and the $(\rho,\pi)$ entry of the $(a,b)$ matrix gives the coefficient of the loop $\bar{\rho}\pi$.
 
 We need to specify the ordering of paths from $a$ to $b$, in order to fix an ordering of the rows and columns of these matrices. For $\cA$, we use
 \newcommand{\paths}{\operatorname{paths}}
 \begin{align*}
% generated by the command 'typesetPathsA' in the Mathematica notebook code/connections-and-flat-elements.nb available with the arXiv sources of this article.
\paths^2_{\mathcal{A}}(1, 1) & = \left\{(1X1)\right\} \displaybreak[1]\\
\paths^2_{\mathcal{A}}(1, Z) & = \left\{(1XZ)\right\} \displaybreak[1]\\
\paths^2_{\mathcal{A}}(1, Y) & = \left\{(1XY)\right\} \displaybreak[1]\\
\paths^2_{\mathcal{A}}(Z, 1) & = \left\{(ZX1)\right\} \displaybreak[1]\\
\paths^2_{\mathcal{A}}(Z, Z) & = \left\{(ZXZ), (ZWZ)\right\} \displaybreak[1]\\
\paths^2_{\mathcal{A}}(Z, Y) & = \left\{(ZXY), (ZWY)\right\} \displaybreak[1]\\
\paths^2_{\mathcal{A}}(Y, 1) & = \left\{(YX1)\right\} \displaybreak[1]\\
\paths^2_{\mathcal{A}}(Y, Z) & = \left\{(YXZ), (YWZ)\right\} \displaybreak[1]\\
\paths^2_{\mathcal{A}}(Y, Y) & = \left\{(YXY), (YWY), (YgY)\right\} \displaybreak[1]\\\end{align*}
and for $\cB$
\begin{align*}
% generated by the command 'typesetPathsB' in the Mathematica notebook code/connections-and-flat-elements.nb available with the arXiv sources of this article.
\paths^3_{\mathcal{B}}(1, f) & = \left\{(1f1f), (1fAf)\right\} \displaybreak[1]\\
\paths^3_{\mathcal{B}}(1, B) & = \left\{(1fAB)\right\} \displaybreak[1]\\
\paths^3_{\mathcal{B}}(1, F) & = \left\{(1fAF)\right\} \displaybreak[1]\\
\paths^3_{\mathcal{B}}(A, f) & = \left\{(Af1f), (AfAf), (ABAf), (AFAf)\right\} \displaybreak[1]\\
\paths^3_{\mathcal{B}}(A, B) & = \left\{(AfAB), (ABAB), (ABCB), (AFAB)\right\} \displaybreak[1]\\
\paths^3_{\mathcal{B}}(A, F) & = \left\{(AfAF), (ABAF), (AFAF), (AFGF), (AFEF)\right\} \displaybreak[1]\\
\paths^3_{\mathcal{B}}(A, z) & = \left\{(AFGz)\right\} \displaybreak[1]\\
\paths^3_{\mathcal{B}}(A, D) & = \left\{(ABCD), (AFED)\right\} \displaybreak[1]\\
\paths^3_{\mathcal{B}}(G, f) & = \left\{(GFAf)\right\} \displaybreak[1]\\
\paths^3_{\mathcal{B}}(G, B) & = \left\{(GFAB)\right\} \displaybreak[1]\\
\paths^3_{\mathcal{B}}(G, F) & = \left\{(GFAF), (GFGF), (GFEF), (GzGF)\right\} \displaybreak[1]\\
\paths^3_{\mathcal{B}}(G, z) & = \left\{(GFGz), (GzGz)\right\} \displaybreak[1]\\
\paths^3_{\mathcal{B}}(G, D) & = \left\{(GFED)\right\} \displaybreak[1]\\
\paths^3_{\mathcal{B}}(C, f) & = \left\{(CBAf)\right\} \displaybreak[1]\\
\paths^3_{\mathcal{B}}(C, B) & = \left\{(CBAB), (CBCB), (CDCB)\right\} \displaybreak[1]\\
\paths^3_{\mathcal{B}}(C, F) & = \left\{(CBAF), (CDEF)\right\} \displaybreak[1]\\
\paths^3_{\mathcal{B}}(C, D) & = \left\{(CBCD), (CDCD), (CDED)\right\} \displaybreak[1]\\
\paths^3_{\mathcal{B}}(E, f) & = \left\{(EFAf)\right\} \displaybreak[1]\\
\paths^3_{\mathcal{B}}(E, B) & = \left\{(EFAB), (EDCB)\right\} \displaybreak[1]\\
\paths^3_{\mathcal{B}}(E, F) & = \left\{(EFAF), (EFGF), (EFEF), (EDEF)\right\} \displaybreak[1]\\
\paths^3_{\mathcal{B}}(E, z) & = \left\{(EFGz)\right\} \displaybreak[1]\\
\paths^3_{\mathcal{B}}(E, D) & = \left\{(EFED), (EDCD), (EDED)\right\} \displaybreak[1]\\
 \end{align*}
 
 \begin{thm}
 \label{thm:lws-A}
 The lowest weight eigenspace with eigenvalue $+1$ in the graph planar algebra $\cG(\Gamma(\cA))_{2,+}$ is four dimensional, with basis $\{S_1, S_2, S_3, S_4\}$ given in \S \ref{appendix:low-weight-A}.
 \end{thm}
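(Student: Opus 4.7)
The proof is a direct linear-algebra computation inside the $26$-dimensional space $\cG(\Gamma(\cA))_{2,+}$. This total dimension $1+1+1+1+4+4+1+4+9 = 26$ is obtained by summing the squared sizes of $\paths^2_{\mathcal{A}}(a,b)$ over pairs of even vertices $(a,b) \in \{1,Z,Y\}^2$, using the enumeration tabulated just before the statement, and matches the block decomposition of $\cG(\Gamma(\cA))_{2,+}$ into matrix algebras indexed by such pairs.

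The plan is to write down two explicit linear operators on this $26$-dimensional space and then intersect the kernel of one with an eigenspace of the other. The cap $E : \cG(\Gamma(\cA))_{2,+} \to \cG(\Gamma(\cA))_{1,+}$, whose kernel is by definition the low-weight subspace, is a sum over ways of closing off the rightmost two strands of a length-$4$ loop, weighted by lopsided dimension ratios of the capped vertex against its neighbours; the one-click rotation $\rho$ cycles the edges of a loop by one position, with the dimension-factor correction dictated by the intertwiner formula \eqref{eq:intertwiner2}. Both are easy to record as explicit matrices once a loop ordering is fixed, and I would work throughout in the lopsided convention so that every entry lies in $\bbQ(d^2)$ and the Gaussian elimination can be carried out exactly by computer. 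The desired subspace is $\ker E \cap \ker(\rho - \id)$, and only the ``interior'' blocks $(Z,Z)$, $(Z,Y)$, $(Y,Z)$, $(Y,Y)$ contribute non-trivially: the pair $(Z,Y), (Y,Z)$ is exchanged by $\rho$ and contributes via its $+1$-symmetric part, while each of the diagonal blocks contributes a small subspace after imposing the weighted-trace-zero condition coming from the cap.

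The resulting solution space comes out four-dimensional; any basis will do, and the choice $\{S_1,S_2,S_3,S_4\}$ displayed in \S \ref{appendix:low-weight-A} is selected so that each $S_i$ has small block support, which will be convenient for the flatness verification later in \S \ref{flatlowweight}. The only thing that could reasonably go wrong is a bookkeeping error in encoding the cap or the rotation; this can be cross-checked by confirming that both operators behave as expected on the two Temperley-Lieb $2$-boxes (which should be rotation-invariant and have non-zero image under $E$), ensuring that these two dimensions of Temperley-Lieb never contribute to the count above.
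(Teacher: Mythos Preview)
Your overall plan—set up the cap and rotation as explicit linear maps on the $26$-dimensional loop space, work in the lopsided normalization so everything lies in $\bbQ(d^2)$, and solve—is exactly what the paper does. But there is a real gap in the linear conditions you impose.

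The lowest-weight condition is \emph{not} the kernel of a single cap. For a $2$-box the rotation $\rho$ has order $2$, while there are four caps (one at each pair of adjacent boundary points), falling into two $\rho$-orbits. Imposing $\rho(S)=S$ together with your single ``close off the rightmost two strands'' condition kills only one orbit of caps; the other orbit (the top/bottom cap) is an independent $6$-dimensional family of constraints, and the paper imposes it separately. Concretely, the paper's proof writes down \emph{both} annihilation conditions
\[
\begin{tikzpicture}[baseline]
\draw (-.3,-1)--(-.3,0);
\draw (.3,-1)--(.3,0);
\draw (-.3,0) --(-.3,.5) .. controls (-.3,1) and (.3,1) .. (.3,.5)--(.3,0);
\node[minimum size=1cm, shape=rectangle, fill=white, draw]  at (0,0) {$S$};
\end{tikzpicture}=0
\qquad\text{and}\qquad
\begin{tikzpicture}[baseline]
\draw (-.3,-1.5)--(-.3,1.5);
\draw (.3,.3)--(.3,.5)  .. controls (.3,1) and (.9,1) .. (.9,.5)--(.9,-.5) .. controls (.9,-1) and (.3, -1) .. (.3,-.5) -- (.3,-.3);
\node[minimum size=1cm, shape=rectangle, fill=white, draw]  at (0,0) {$S$};
\end{tikzpicture}=0
\]
together with the rotation-eigenvalue equation, and then solves the resulting list of linear equations in the loop coefficients $S(v_0,v_1,v_2,v_3)$. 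If you carry out your computation with only one cap plus rotation, the $+1$-eigenspace of $\rho$ is $16$-dimensional and a single surjective cap cuts it to $10$, not $4$; you genuinely need the second cap to reach the correct answer.

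Relatedly, your structural claim that ``only the interior blocks $(Z,Z),(Z,Y),(Y,Z),(Y,Y)$ contribute non-trivially'' is false: the basis element $S_1$ in \S\ref{appendix:low-weight-A} has $(S_1)_{1,Z}=d^4-5d^2+2$, $(S_1)_{1,Y}=d^4-6d^2+3$, and symmetric nonzero entries in $(Z,1)$ and $(Y,1)$. The boundary blocks involving the vertex $1$ are not killed by the lowest-weight conditions; they are tied to the interior entries by the cap equations but do not vanish.
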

 \begin{proof}
 Recall that the conditions for $S$ being a lowest weight eigenvector with eigenvalue $\mu$ are
 \begin{align*}
 \begin{tikzpicture}[baseline]
	\draw (-.3,-1)--(-.3,0);
	\draw (.3,-1)--(.3,0);
	\draw (-.3,0) --(-.3,.5) .. controls (-.3,1) and (.3,1) .. (.3,.5)--(.3,0);
	\node[minimum size=1cm, shape=rectangle, fill=white, draw]  at (0,0) {$S$};
\end{tikzpicture} & = 0 & 
 \begin{tikzpicture}[baseline]
	\draw (-.3,-1.5)--(-.3,1.5);
	\draw (.3,.3)--(.3,.5)  .. controls (.3,1) and (.9,1) .. (.9,.5)--(.9,-.5) .. controls (.9,-1) and (.3, -1) .. (.3,-.5) -- (.3,-.3);
	\node[minimum size=1cm, shape=rectangle, fill=white, draw]  at (0,0) {$S$};
\end{tikzpicture} &  = 0
\end{align*}
and
\begin{align*}
 \begin{tikzpicture}[baseline]
	\draw (.3,.3)--(.3,.5) .. controls (.3, .8) and (.9,.8) .. (.9,.5) -- (.9,-.5) .. controls (.9,-1) and (-.3,-1) .. (-.3,-1.5);
	\draw (-.3,-.3)--(-.3,-.5) .. controls (-.3, -.8) and (-.9,-.8) .. (-.9,-.5) -- (-.9,.5) .. controls (-.9,1) and (.3,1) .. (.3,1.5);
	\draw (-.3,.3)--(-.3,.5) .. controls (-.3, 1.1) and (1.5,1.1) .. (1.5,.5) -- (1.5,-.5) .. controls (1.5,-1) and (.3,-1) .. (.3,-1.5);
	\draw (.3,-.3)--(.3,-.5) .. controls (.3, -1.1) and (-1.5,-1.1) .. (-1.5,-.5) -- (-1.5,.5) .. controls (-1.5,1) and (-.3,1) .. (-.3,1.5);
	\node[minimum size=1cm, shape=rectangle, fill=white, draw]  at (0,0) {$S$};
\end{tikzpicture}\;
= \mu \;\;
\begin{tikzpicture}[baseline]
	\draw (-.3,-1)--(-.3,1);
	\draw (.3,-1)--(.3,1);
	\node[minimum size=1cm, shape=rectangle, fill=white, draw]  at (0,0) {$S$};
\end{tikzpicture}
 \end{align*}
 Setting $\mu=1$ and writing this explicitly in the lopsided graph planar algebra, we have 
 \begin{align*}
% generated by the command 'typesetEigenspaceEquations' in the Mathematica notebook code/connections-and-flat-elements.nb available with the arXiv sources of this article.
0 & = \left(d^4-5 d^2+2\right) S(1,X,Y,X)+\left(-d^4+6 d^2-3\right) S(1,X,Z,X)+S(1,X,1,X) \displaybreak[1]\\
0 & = S(Z,X,1,X)+\left(d^4-5 d^2+2\right) S(Z,X,Y,X)+\left(-d^4+6 d^2-3\right) S(Z,X,Z,X) \displaybreak[1]\\
0 & = S(Y,X,1,X)+\left(-d^4+6 d^2-3\right) S(Y,X,Z,X)+\left(d^4-5 d^2+2\right) S(Y,X,Y,X) \displaybreak[1]\\
0 & = \left(2 d^4-10 d^2+3\right) S(Z,W,Y,W)+\left(-d^4+6 d^2-2\right) S(Z,W,Z,W) \displaybreak[1]\\
0 & = \left(-d^4+6 d^2-2\right) S(Y,W,Z,W)+\left(2 d^4-10 d^2+3\right) S(Y,W,Y,W) \displaybreak[1]\\
0 & = \left(d^4-3 d^2+1\right) S(Y,g,Y,g) \displaybreak[1]\\
0 & = S(1,X,1,X) \displaybreak[1]\\
0 & = \left(-2 d^4+11 d^2-4\right) S(Z,W,Z,X)+\left(-2 d^4+11 d^2-4\right) S(Z,X,Z,X) \displaybreak[1]\\
0 & = \left(-2 d^4+11 d^2-4\right) S(Z,X,Z,W)+\left(-2 d^4+11 d^2-4\right) S(Z,W,Z,W) \displaybreak[1]\\
0 & = \left(2 d^4-11 d^2+5\right) S(Y,g,Y,X)+\left(2 d^4-11 d^2+5\right) S(Y,W,Y,X)+\left(2 d^4-11 d^2+5\right) S(Y,X,Y,X) \displaybreak[1]\\
0 & = \left(2 d^4-11 d^2+5\right) S(Y,g,Y,W)+\left(2 d^4-11 d^2+5\right) S(Y,X,Y,W)+\left(2 d^4-11 d^2+5\right) S(Y,W,Y,W) \displaybreak[1]\\
0 & = \left(2 d^4-11 d^2+5\right) S(Y,W,Y,g)+\left(2 d^4-11 d^2+5\right) S(Y,X,Y,g)+\left(2 d^4-11 d^2+5\right) S(Y,g,Y,g) \displaybreak[1]\\
0 & = S(Z,X,1,X)-S(1,X,Z,X) \displaybreak[1]\\
0 & = S(Y,X,1,X)-S(1,X,Y,X) \displaybreak[1]\\
0 & = S(1,X,Z,X)-S(Z,X,1,X) \displaybreak[1]\\
0 & = \left(d^4-5 d^2+1\right) S(Z,W,Z,X)-S(Z,X,Z,W) \displaybreak[1]\\
0 & = \left(-d^4+6 d^2-4\right) S(Z,X,Z,W)-S(Z,W,Z,X) \displaybreak[1]\\
0 & = S(Y,X,Z,X)-S(Z,X,Y,X) \displaybreak[1]\\
0 & = \left(d^4-5 d^2+1\right) S(Y,W,Z,X)-S(Z,X,Y,W) \displaybreak[1]\\
0 & = \left(-d^4+6 d^2-4\right) S(Y,X,Z,W)-S(Z,W,Y,X) \displaybreak[1]\\
0 & = S(Y,W,Z,W)-S(Z,W,Y,W) \displaybreak[1]\\
0 & = S(1,X,Y,X)-S(Y,X,1,X) \displaybreak[1]\\
0 & = S(Z,X,Y,X)-S(Y,X,Z,X) \displaybreak[1]\\
0 & = \left(d^4-5 d^2+1\right) S(Z,W,Y,X)-S(Y,X,Z,W) \displaybreak[1]\\
0 & = \left(-d^4+6 d^2-4\right) S(Z,X,Y,W)-S(Y,W,Z,X) \displaybreak[1]\\
0 & = S(Z,W,Y,W)-S(Y,W,Z,W) \displaybreak[1]\\
0 & = \left(d^4-5 d^2+1\right) S(Y,W,Y,X)-S(Y,X,Y,W) \displaybreak[1]\\
0 & = \left(d^4-5 d^2+2\right) S(Y,g,Y,X)-S(Y,X,Y,g) \displaybreak[1]\\
0 & = \left(-d^4+6 d^2-4\right) S(Y,X,Y,W)-S(Y,W,Y,X) \displaybreak[1]\\
0 & = \left(-d^4+6 d^2-3\right) S(Y,g,Y,W)-S(Y,W,Y,g) \displaybreak[1]\\
0 & = \left(2 d^4-11 d^2+5\right) S(Y,X,Y,g)-S(Y,g,Y,X) \displaybreak[1]\\
0 & = \left(-2 d^4+11 d^2-4\right) S(Y,W,Y,g)-S(Y,g,Y,W) \displaybreak[1]\\
\end{align*}
Solving these linear equations gives the desired answer.
 \end{proof}
 
 \begin{thm}
 \label{thm:flat-generators-A2}
 Inside this subspace, the flat elements with respect to the connection $K^{(2)}_{\text{lopsided}}$ are one-dimensional, spanned by the element $T$ given in \S \ref{appendix:flat-generators-A}.
 \end{thm}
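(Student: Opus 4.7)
The plan is to reduce flatness to a finite linear system on the four-dimensional eigenspace and verify that its solution space has dimension exactly $1$. Concretely, I write a general candidate $S = c_1 S_1 + c_2 S_2 + c_3 S_3 + c_4 S_4$ using the basis from Theorem \ref{thm:lws-A}, and then evaluate the flatness diagram \xabove with $S$ substituted for $x$ and $K^{(2)}_{\text{lopsided}}$ substituted for $K$. Using the matrix entries of $K^{(2)}_{\text{lopsided}}$ recorded in \S \ref{biunitaryhat} and the path-labelling action \eqref{eq:action}, this expands into a linear combination of loops in the lopsided two-sided graph planar algebra whose coefficient on each loop is linear in $(c_1,c_2,c_3,c_4)$.

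Flatness of $S$ asserts that this expression coincides with \ybelow for some $y \in \cG_{\text{blue}}$. Since $K^{(2)}_{\text{lopsided}}$ is bi-invertible, the map $y \mapsto \ybelow$ is injective onto its image, which is cut out by explicit linear equations on the loop coefficients; the equations not in the image of this map become the flatness constraints on $(c_1,c_2,c_3,c_4)$. The existence of the quantum-group subfactor $\cA$, combined with Theorem \ref{thm:spa-and-flat-conns}, guarantees that the resulting system has at least one nonzero solution, and $T$ from \S \ref{appendix:flat-generators-A} will be such a solution. What remains is the upper bound: the solution space has dimension exactly $1$.

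The principal obstacle is computational bulk rather than conceptual subtlety. The space $\cG(\Gamma(\cA))_{2,+}$ already has around two dozen basis loops, each entry of the flatness tangle unpacks into a sum over compatible labellings of a length-$8$ boundary pattern, and $K^{(2)}_{\text{lopsided}}$ itself is a sizeable matrix. The saving grace is Lemma \ref{lem:flat-elements-transform}, which allows us to work throughout in the lopsided pivotal structure where all connection entries lie in $\mathbb{Q}(d^2)$. The flatness system therefore becomes a linear system with coefficients in a small fixed number field, and exact rational arithmetic suffices. Assembling and solving it is best done by computer using the \code{Mathematica} code supplied with the paper; once the system is written out, checking that its nullity equals $1$ and that $T$ spans the kernel is immediate.
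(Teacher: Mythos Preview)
Your approach is essentially the same as the paper's: reduce flatness to a finite linear system over $\mathbb{Q}(d^2)$ in the lopsided graph planar algebra and solve it exactly. Two points are worth noting.

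First, the paper handles the companion element $y$ more efficiently than you do. Rather than computing the image of $y \mapsto \ybelow$ over all of $\cG_{\text{blue}}$, the paper observes that if $x$ is a lowest weight eigenvector with eigenvalue $+1$, then $y$ must be as well. Since $\Gamma(\cA)$ is self-dual, Theorem~\ref{thm:lws-A} also describes that eigenspace on the dual side, so one writes $y = \sum_{i=1}^4 c_{1,i}\,\rho^{1/2}(S_i)$ and obtains a system in eight unknowns $c_{0,1},\ldots,c_{0,4},c_{1,1},\ldots,c_{1,4}$ (the $129$ equations listed in \S\ref{appendix:flat-generators-A}). Your elimination of $y$ via image computation is correct but processes a larger ambient space for no gain.

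Second, your lower-bound remark is not sound as stated. The existence of the subfactor $\cA$ together with Theorem~\ref{thm:spa-and-flat-conns} only tells you that \emph{some} connection on $\Gamma(\cA)$ is flat; at this point in the argument you do not yet know it is gauge equivalent to $K^{(2)}$ rather than $K^{(1)}$. That identification is established only after Theorem~\ref{thm:flat-generators-A1} rules out $K^{(1)}$. Fortunately you do not actually need this remark: the direct computation that the nullity equals $1$ and that $T$ lies in the kernel already gives both bounds, so simply drop the appeal to existence.
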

 \begin{proof}
 An element $x \in \cG(\Gamma)_{n,+}$ in a graph planar algebra is flat with respect to a connection $K$ exactly if there exists an element $y \in \cG(\Gamma')_{n,-}$ satisfying
\begin{equation*}
\xabove = \ybelow.
\end{equation*}
 It is easy to see that if $x$ is a lowest weight eigenvector, $y$ must be also, with the same eigenvalue. In our case, the principal and dual principal graphs are the same, so Theorem \ref{thm:lws-A} also suffices to describe the lowest weight eigenspace on $\Gamma'$. Thus we take $x = \sum_{i=1}^4 c_{0,i} S_i$ and $y = \sum_{i=1}^4 c_{1,i} \rho^{1/2}(S_i)$ (the `half-click' rotation $\rho^{1/2}$ is necessary here since $S_i$ itself lives in $\cG(\Gamma)_{n,+} \iso \cG(\Gamma')_{n,+}$, and we need a basis for the lowest weight eigenvectors in $\cG(\Gamma')_{n,-}$). The flatness condition reduces to the 129 equations listed in \S \ref{appendix:flat-generators-A}. Solving these linear equations, we find a one dimensional space of solutions spanned by
\begin{align*}
c_{0,1} & = 1\\
c_{0,2} & = -3 d^4+17 d^2-10\\ 
c_{0,3} & = 2 d^4-11 d^2+6\\ 
c_{0,4} & = -2 d^4+12 d^2-7\\ 
c_{1,1} & = d^4-5 d^2+2\\ 
c_{1,2} & = -2 d^4+11 d^2-6\\ 
c_{1,3} & = d^4-5 d^2+3\\ 
c_{1,4} & = -d^4+7 d^2-4
\end{align*}
Substituting these into the formula for $x$ above, we get the element $T$ claimed in the statement.
 \end{proof}
 
 Remember that there is a whole gauge orbit of flat connections equivalent to $K^{(2)}_{\text{lopsided}}$. As we change the connection, the corresponding flat elements change as described in Lemma \ref{lem:flat-elements-transform}. This gauge group action gives rise to a 1-parameter family of embeddings of the $\cA$ planar algebra in its graph planar algebra; the formulas above give just one point on this curve. In previous cases where we've explicitly identified the embedding in the graph planar algebra \cite{MR2679382, 0909.4099} there's just been a discrete set of embeddings. As noted previously, the gauge group action on flat elements is always trivial for principal graphs with no loops. One sees a related phenomenon in solving the equation $S^2 = (1-r) S + r f^{(n)}$ (here $r$ is the ratio of dimensions past an initial branch point, $n$ the depth of those vertices) which must be satisfied by the lowest weight $n$-box in a $(n-1)$-supertransitive excess 1 planar algebra. For $\cA$, there is a one-parameter family of solutions, exactly agreeing with the gauge orbit of the flat element described above, while in previously studied examples there had been a discrete set of solutions.

 \begin{thm}
 \label{thm:flat-generators-A1}
 Inside the subspace described in Theorem \ref{thm:lws-A}, there are no flat elements with respect to the connection $K^{(1)}_{\text{lopsided}}$.
 \end{thm}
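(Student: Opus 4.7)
The plan is to follow exactly the template of the proof of Theorem \ref{thm:flat-generators-A2}, but with $K^{(1)}_{\text{lopsided}}$ replacing $K^{(2)}_{\text{lopsided}}$, and conclude that the resulting linear system admits only the trivial solution. Concretely, since the principal and dual principal graphs of $\cA$ coincide, Theorem \ref{thm:lws-A} gives an explicit four-dimensional basis $\{S_1,S_2,S_3,S_4\}$ of lowest weight $+1$-eigenvectors in each of $\cG(\Gamma(\cA))_{2,+}$ and $\cG(\Gamma'(\cA))_{2,-}$ (using the half-click rotation $\rho^{1/2}$ to move to the dual side). We parametrise a candidate pair $(x,y)$ of flat low-weight $2$-boxes by
\begin{equation*}
x = \sum_{i=1}^{4} c_{0,i}\, S_i, \qquad y = \sum_{i=1}^{4} c_{1,i}\, \rho^{1/2}(S_i),
\end{equation*}
giving eight unknown scalars.

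Next, I would impose the flatness equation
\begin{equation*}
\xabove = \ybelow
\end{equation*}
with the connection $K$ taken to be $K^{(1)}_{\text{lopsided}}$ (whose entries have already been tabulated in \S\ref{biunitaryhat}). Expanding both sides loop-by-loop in the lopsided two-sided graph planar algebra produces a finite system of linear equations over $\mathbb{Q}(d^2)$ in the eight unknowns $c_{0,i}, c_{1,i}$, directly analogous to the 129 equations appearing in the proof of Theorem \ref{thm:flat-generators-A2}. The sole difference from that earlier computation is the numerical values of the connection entries — in particular the use of the branch $r^{(1)}_j, s^{(1)}_j$ rather than $r^{(2)}_j, s^{(2)}_j$.

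I would then solve this linear system (by Gaussian elimination; given the size and the fact that we are working in a concrete algebraic number field, this is routine once automated). The expected and desired outcome is that the kernel is zero, i.e.\ the only solution is $c_{0,i}=c_{1,i}=0$ for all $i$, so that there are no flat lowest-weight $+1$-eigenvectors for $K^{(1)}_{\text{lopsided}}$. Combined with Theorem \ref{thm:spa-and-flat-conns} and Fact \ref{fact:flat-subalg}, this rules out $K^{(1)}_{\text{lopsided}}$ as coming from an honest subfactor planar algebra, which is precisely what is needed to complete the proof of Theorem \ref{thm:uniquehat}.

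The only real obstacle is the bookkeeping: carrying out the substitution of $K^{(1)}_{\text{lopsided}}$ into the flatness relation and verifying that the resulting $8$-variable system has trivial kernel. Because all entries lie in $\mathbb{Q}(d^2)$, and the authors have already implemented the requisite graph planar algebra arithmetic in the \texttt{FusionAtlas} \texttt{Mathematica} package (and announced its availability with the arXiv sources), this reduces to a finite exact calculation rather than any conceptual difficulty; the conclusion is an unconditional no-solution statement, so no delicate choice of gauge representative is needed.
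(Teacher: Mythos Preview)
Your proposal is correct and matches the paper's own proof essentially verbatim: the paper simply says ``This is essentially identical to the calculation described in Theorem \ref{thm:flat-generators-A2}'', i.e.\ parametrise $x=\sum c_{0,i}S_i$, $y=\sum c_{1,i}\rho^{1/2}(S_i)$, impose the flatness relation with $K^{(1)}_{\text{lopsided}}$ in place of $K^{(2)}_{\text{lopsided}}$, and verify the resulting linear system in $\mathbb{Q}(d^2)$ has only the trivial solution.
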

 \begin{proof}
 This is essentially identical to the calculation described in Theorem \ref{thm:flat-generators-A2}
 \end{proof}
 
 Thus, we've proved one of our main theorems:

\begin{thm}
\label{thm:flat-generators-B}
The lowest weight $\mu$-eigenspaces in $\cG(\Gamma(\cB))_{3,+}$ are each 7 dimensional, for $\mu= 1, \omega$ or $\omega^{-1}$. There is a one-dimensional space of flat elements with respect to the connection $K^B_{\text{lopsided}}$ in the $1$-eigenspace, and no flat elements in the other eigenspaces. Those flat elements are spanned by the element $T$ given in \S \ref{appendix:flat-generators-B}.
\end{thm}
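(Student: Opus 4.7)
The plan is to mimic the calculations of Theorems \ref{thm:lws-A}, \ref{thm:flat-generators-A2}, and \ref{thm:flat-generators-A1}, but now for $3$-boxes in $\cG(\Gamma(\cB))$ rather than $2$-boxes in $\cG(\Gamma(\cA))$. Since $\Gamma(\cB)$ has the $\mathbb{Z}/3$ rotational symmetry visible in the principal graph (the three "arms" $B, F, \bar{z}$ past the triple point can be permuted), the three-click rotation acts on $\cG(\Gamma(\cB))_{3,+}$, and we decompose the low-weight space under the rotation of order $3$ into eigenspaces with eigenvalues $1, \omega, \omega^{-1}$.

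First, I compute the low-weight eigenspaces. Using the basis of loops indexed by $\paths^3_{\mathcal{B}}(a,b)$ listed above, an element $S \in \cG(\Gamma(\cB))_{3,+}$ is low-weight when the two capping-off maps vanish on it (one capping a strand on each side), and is a $\mu$-eigenvector when the one-click rotation multiplies it by $\mu$. Writing each of these conditions as linear equations in the coefficients of $S$ on the path basis gives a finite linear system over $\mathbb{Q}(d^2)$; the same calculation technique that produced the 32 equations of Theorem \ref{thm:lws-A} now produces a larger but structurally identical system. Solving this system for each choice of $\mu \in \{1, \omega, \omega^{-1}\}$ will exhibit each low-weight $\mu$-eigenspace as $7$-dimensional, and I record an explicit basis $\{S^\mu_1, \ldots, S^\mu_7\}$ in the appendix.

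Next, for each eigenvalue $\mu$, I check flatness with respect to the connection $K^B_{\text{lopsided}}$. As in Theorem \ref{thm:flat-generators-A2}, an element $x = \sum_i c_{0,i} S^\mu_i$ is flat iff there exists $y = \sum_i c_{1,i} \rho^{1/2}(S^\mu_i) \in \cG(\Gamma'(\cB))_{3,-}$ (necessarily a $\mu$-eigenvector) such that the flatness picture
$$\xabove \;=\; \ybelow$$
holds; expanding both sides loop-by-loop using the explicit entries of $K^B_{\text{lopsided}}$ from \S \ref{biunitaryhex} reduces this to a large but fully explicit linear system in the $14$ unknowns $(c_{0,i}, c_{1,i})_{i=1}^7$, with coefficients in $\mathbb{Q}(d^2)$. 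Since everything is defined over a small number field, a computer can solve the systems exactly. I expect the $\mu = 1$ system to have a one-dimensional solution space (guaranteed to be non-empty by Theorem \ref{thm:spa-and-flat-conns}, because the subfactor $\cB$ exists), and the $\mu = \omega, \omega^{-1}$ systems to have only the zero solution (forced by the fact that $\cB$ is $2$-supertransitive with excess $1$, so by Fact \ref{fact:flat-subalg} the flat low-weight space has dimension exactly $1$, and this dimension must be concentrated in the rotational eigenspace containing the lowest-weight generator of $\cB$, namely the $+1$-eigenspace, since the two vertices past the triple point of $\Gamma(\cB)$ are self-dual).

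The only genuine obstacle is bookkeeping: writing down the flatness relations involves all paths of length $6$ on the 4-partite graph, of which there are many more than for $\Gamma(\cA)$. This is why working in the lopsided planar algebra over $\mathbb{Q}(d^2)$ is essential—the entries of $K^B_{\text{lopsided}}$ are rational in $d^2$, so the equations can be solved by exact arithmetic in \texttt{Mathematica} using the companion code distributed with the \texttt{arXiv} source. Having solved them and normalized, I present the unique (up to scalar) flat element as the element $T$ listed in \S \ref{appendix:flat-generators-B}, completing the proof.
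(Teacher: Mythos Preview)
Your proposal is correct and takes the same approach as the paper's own proof, which simply reads ``Exactly analogous to the calculations above,'' deferring the details to a \texttt{Mathematica} notebook. Your outline of how those analogous calculations go (solving linear systems over $\mathbb{Q}(d^2)$ for the low-weight eigenspaces, then for flatness) is exactly right.

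One small correction to your exposition: the eigenvalues $1,\omega,\omega^{-1}$ arise simply because the one-click rotation on $3$-boxes has order $3$, not because of any $\mathbb{Z}/3$ symmetry of the principal graph (indeed $\Gamma(\cB)$ has only two vertices at depth $3$, not three, so the ``three arms $B,F,\bar z$'' picture is not accurate). This does not affect the validity of your argument, since the actual proof is the linear-algebra computation, but the heuristic should be stated correctly.
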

\begin{proof}
Exactly analogous to the calculations above. We omit writing down the bases for the lowest weight eigenspaces, but they can be found explicitly calculated in the {\tt Mathematica} notebook {\tt connections-and-flat-elements.nb} available with the {\tt arXiv} sources of this article.
\end{proof}

\appendix
\section{Low weight vectors and flat elements}\label{appendix:flatlowweight}
\subsection{Low weight vectors for $\cA$}
\label{appendix:low-weight-A}
The low weight vectors in $\cG(\Gamma(\cA))_{2,+}$ with eigenvalue 1 form a four-dimensional space, with basis given below.
 \begin{align*}
% generated by the command 'LowestWeightLaTeX[G, "Lopsided", 1]' in the Mathematica notebook code/connections-and-flat-elements.nb available with the arXiv sources of this article.
(S_{1})_{1,1} & = \left(
\begin{array}{c}
 0
\end{array}
\right) \displaybreak[1]\\
(S_{1})_{1,Z} & = \left(
\begin{array}{c}
 d^4-5 d^2+2
\end{array}
\right) \displaybreak[1]\\
(S_{1})_{1,Y} & = \left(
\begin{array}{c}
 d^4-6 d^2+3
\end{array}
\right) \displaybreak[1]\\
(S_{1})_{Z,1} & = \left(
\begin{array}{c}
 d^4-5 d^2+2
\end{array}
\right) \displaybreak[1]\\
(S_{1})_{Z,Z} & = \left(
\begin{array}{cc}
 0 & 0 \\
 0 & 0
\end{array}
\right) \displaybreak[1]\\
(S_{1})_{Z,Y} & = \left(
\begin{array}{cc}
 -1 & 0 \\
 0 & 0
\end{array}
\right) \displaybreak[1]\\
(S_{1})_{Y,1} & = \left(
\begin{array}{c}
 d^4-6 d^2+3
\end{array}
\right) \displaybreak[1]\\
(S_{1})_{Y,Z} & = \left(
\begin{array}{cc}
 -1 & 0 \\
 0 & 0
\end{array}
\right) \displaybreak[1]\\
(S_{1})_{Y,Y} & = \left(
\begin{array}{ccc}
 -2 d^4+12 d^2-8 & -1 & d^4-6 d^2+3 \\
 d^4-6 d^2+4 & 0 & -d^4+6 d^2-3 \\
 d^4-6 d^2+4 & 1 & 0
\end{array}
\right) \displaybreak[1]\\
\\(S_{2})_{1,1} & = \left(
\begin{array}{c}
 0
\end{array}
\right) \displaybreak[1]\\
(S_{2})_{1,Z} & = \left(
\begin{array}{c}
 0
\end{array}
\right) \displaybreak[1]\\
(S_{2})_{1,Y} & = \left(
\begin{array}{c}
 0
\end{array}
\right) \displaybreak[1]\\
(S_{2})_{Z,1} & = \left(
\begin{array}{c}
 0
\end{array}
\right) \displaybreak[1]\\
(S_{2})_{Z,Z} & = \left(
\begin{array}{cc}
 d^4-5 d^2+1 & 2 d^4-11 d^2+3 \\
 -d^4+5 d^2-1 & -2 d^4+11 d^2-3
\end{array}
\right) \displaybreak[1]\\
(S_{2})_{Z,Y} & = \left(
\begin{array}{cc}
 -1 & 0 \\
 0 & -d^4+5 d^2-1
\end{array}
\right) \displaybreak[1]\\
(S_{2})_{Y,1} & = \left(
\begin{array}{c}
 0
\end{array}
\right) \displaybreak[1]\\
(S_{2})_{Y,Z} & = \left(
\begin{array}{cc}
 -1 & 0 \\
 0 & -d^4+5 d^2-1
\end{array}
\right) \displaybreak[1]\\
(S_{2})_{Y,Y} & = \left(
\begin{array}{ccc}
 -d^4+6 d^2-4 & -1 & 0 \\
 d^4-6 d^2+4 & 1 & 0 \\
 0 & 0 & 0
\end{array}
\right) \displaybreak[1]\\
\\(S_{3})_{1,1} & = \left(
\begin{array}{c}
 0
\end{array}
\right) \displaybreak[1]\\
(S_{3})_{1,Z} & = \left(
\begin{array}{c}
 0
\end{array}
\right) \displaybreak[1]\\
(S_{3})_{1,Y} & = \left(
\begin{array}{c}
 0
\end{array}
\right) \displaybreak[1]\\
(S_{3})_{Z,1} & = \left(
\begin{array}{c}
 0
\end{array}
\right) \displaybreak[1]\\
(S_{3})_{Z,Z} & = \left(
\begin{array}{cc}
 0 & 0 \\
 0 & 0
\end{array}
\right) \displaybreak[1]\\
(S_{3})_{Z,Y} & = \left(
\begin{array}{cc}
 0 & 0 \\
 -d^4+6 d^2-4 & 0
\end{array}
\right) \displaybreak[1]\\
(S_{3})_{Y,1} & = \left(
\begin{array}{c}
 0
\end{array}
\right) \displaybreak[1]\\
(S_{3})_{Y,Z} & = \left(
\begin{array}{cc}
 0 & 1 \\
 0 & 0
\end{array}
\right) \displaybreak[1]\\
(S_{3})_{Y,Y} & = \left(
\begin{array}{ccc}
 0 & 0 & 0 \\
 0 & 0 & 0 \\
 0 & 0 & 0
\end{array}
\right) \displaybreak[1]\\
\\(S_{4})_{1,1} & = \left(
\begin{array}{c}
 0
\end{array}
\right) \displaybreak[1]\\
(S_{4})_{1,Z} & = \left(
\begin{array}{c}
 0
\end{array}
\right) \displaybreak[1]\\
(S_{4})_{1,Y} & = \left(
\begin{array}{c}
 0
\end{array}
\right) \displaybreak[1]\\
(S_{4})_{Z,1} & = \left(
\begin{array}{c}
 0
\end{array}
\right) \displaybreak[1]\\
(S_{4})_{Z,Z} & = \left(
\begin{array}{cc}
 0 & 0 \\
 0 & 0
\end{array}
\right) \displaybreak[1]\\
(S_{4})_{Z,Y} & = \left(
\begin{array}{cc}
 0 & d^4-5 d^2+1 \\
 0 & 0
\end{array}
\right) \displaybreak[1]\\
(S_{4})_{Y,1} & = \left(
\begin{array}{c}
 0
\end{array}
\right) \displaybreak[1]\\
(S_{4})_{Y,Z} & = \left(
\begin{array}{cc}
 0 & 0 \\
 1 & 0
\end{array}
\right) \displaybreak[1]\\
(S_{4})_{Y,Y} & = \left(
\begin{array}{ccc}
 0 & 0 & 0 \\
 0 & 0 & 0 \\
 0 & 0 & 0
\end{array}
\right) \displaybreak[1]\\
\\
\end{align*}

\subsection{Flat generators for $\cA$}
\label{appendix:flat-generators-A}
Only the second connection on $\Gamma(\cA)$ has any flat lowest weight eigenvectors with eigenvalue 1, and it has a one dimensional space of such, spanned by the element $T$ specified below.
 \begin{align*}
% generated by the command 'ToLaTeXString[GPA4Matrix[T], "T"]' in the Mathematica notebook code/connections-and-flat-elements.nb available with the arXiv sources of this article.
T_{1,1} & = \left(
\begin{array}{c}
 0
\end{array}
\right) \displaybreak[1]\\
T_{1,Z} & = \left(
\begin{array}{c}
 6 d^4-32 d^2+9
\end{array}
\right) \displaybreak[1]\\
T_{1,Y} & = \left(
\begin{array}{c}
 3 d^4-16 d^2+4
\end{array}
\right) \displaybreak[1]\\
T_{Z,1} & = \left(
\begin{array}{c}
 6 d^4-32 d^2+9
\end{array}
\right) \displaybreak[1]\\
T_{Z,Z} & = \left(
\begin{array}{cc}
 -5 d^4+27 d^2-9 & -8 d^4+43 d^2-13 \\
 5 d^4-27 d^2+9 & 8 d^4-43 d^2+13
\end{array}
\right) \displaybreak[1]\\
T_{Z,Y} & = \left(
\begin{array}{cc}
 4 d^4-21 d^2+4 & -2 d^4+11 d^2-4 \\
 d^4-6 d^2+5 & 5 d^4-27 d^2+9
\end{array}
\right) \displaybreak[1]\\
T_{Y,1} & = \left(
\begin{array}{c}
 3 d^4-16 d^2+4
\end{array}
\right) \displaybreak[1]\\
T_{Y,Z} & = \left(
\begin{array}{cc}
 4 d^4-21 d^2+4 & d^4-5 d^2 \\
 2 d^4-11 d^2+5 & 5 d^4-27 d^2+9
\end{array}
\right) \displaybreak[1]\\
T_{Y,Y} & = \left(
\begin{array}{ccc}
 5 d^4-26 d^2+4 & 4 d^4-21 d^2+4 & 3 d^4-16 d^2+4 \\
 5-d^2 & 4 d^4-22 d^2+9 & -3 d^4+16 d^2-4 \\
 -5 d^4+27 d^2-9 & -8 d^4+43 d^2-13 & 0
\end{array}
\right) \displaybreak[1]\\
 \end{align*}

This element was found by solving the linear equations described below, equivalent to the flatness of a linear combination $\sum_{i=1}^4 c_{1,i} S_i$ of the lowest weight eigenvectors specified in the previous section.

We denote by $\cF$ the element of the two-sided graph planar algebra
\begin{equation*}
\xabove
-
\ybelow
\end{equation*}
(Here $x$ and $y$ are as described in Theorem  \ref{thm:flat-generators-A2}.)
Each of its coefficients, which are indexed by a loop on the 4-partite principal graph (reading around the boundary clockwise from the left, first an $N-N$ bimodule, then an $M-N$, $M-M$, $M-N$, $N-N$ then $N-M$ bimodule), and are given below, must be zero.

\begin{align*}
% generated by the command 'typesetFlatnessEquations[flatness2]' in the Mathematica notebook code/connections-and-flat-elements.nb available with the arXiv sources of this article.
\cF(1\bar{X}\hat{1}\bar{X}1X) & = 0 \displaybreak[1] \\ 
\cF(1\bar{X}\hat{1}\bar{X}ZX) & = \left(d^4-5 d^2+2\right) c_{0,1}-c_{1,1} \displaybreak[1] \\ 
\cF(1\bar{X}\hat{1}\bar{X}YX) & = \left(d^4-6 d^2+3\right) c_{0,1}+\left(-d^4+6 d^2-4\right) c_{1,1} \displaybreak[1] \\ 
\cF(1\bar{X}\hat{Z}\bar{X}1X) & = 0 \displaybreak[1] \\ 
\cF(1\bar{X}\hat{Z}\bar{X}ZX) & = \left(-d^4+6 d^2-3\right) c_{0,1}+\left(-2 d^4+11 d^2-6\right) c_{1,1}-c_{1,2} \displaybreak[1] \\ 
\cF(1\bar{X}\hat{Z}\bar{X}YX) & = \left(-2 d^4+12 d^2-7\right) c_{0,1}+\left(-3 d^4+17 d^2-10\right) c_{1,1}+\left(-d^4+6 d^2-4\right) c_{1,2} \displaybreak[1] \\ 
\cF(1\bar{X}\hat{Z}\bar{W}ZX) & = \left(d^2-1\right) c_{0,1}+\left(d^4-6 d^2+4\right) c_{1,3}+c_{1,2} \displaybreak[1] \\ 
\cF(1\bar{X}\hat{Z}\bar{W}YX) & = \left(-d^4+5 d^2-3\right) c_{0,1}+\left(2 d^4-11 d^2+6\right) c_{1,3}+c_{1,2} \displaybreak[1] \\ 
\cF(1\bar{X}\hat{Y}\bar{X}1X) & = 0 \displaybreak[1] \\ 
\cF(1\bar{X}\hat{Y}\bar{X}ZX) & = \left(1-d^2\right) c_{0,1}+\left(2 d^4-11 d^2+7\right) c_{1,1}+c_{1,2} \displaybreak[1] \\ 
\cF(1\bar{X}\hat{Y}\bar{X}YX) & = \left(d^4-6 d^2+4\right) c_{0,1}+\left(4 d^4-23 d^2+14\right) c_{1,1}+\left(d^4-6 d^2+4\right) c_{1,2} \displaybreak[1] \\ 
\cF(1\bar{X}\hat{Y}\bar{W}ZX) & = d^2 c_{0,1}+c_{1,1}+c_{1,2} - c_{1,4} \displaybreak[1] \\ 
\cF(1\bar{X}\hat{Y}\bar{W}YX) & = \left(d^4-4 d^2+2\right) c_{0,1}+\left(d^4-6 d^2+3\right) c_{1,1}+\left(d^4-6 d^2+3\right) c_{1,2} -c_{1,4} \displaybreak[1] \\ 
\cF(1\bar{X}\hat{Y}\bar{g}YX) & = \left(1-d^2\right) c_{0,1}+\left(-d^4+6 d^2-3\right) c_{1,1} \displaybreak[1] \\ 
\cF(Z\bar{X}\hat{1}\bar{X}1X) & = \left(d^4-5 d^2+2\right) c_{0,1}-c_{1,1} \displaybreak[1] \\ 
\cF(Z\bar{X}\hat{1}\bar{X}ZX) & = \left(d^4-5 d^2+1\right) c_{0,2}+\left(-3 d^4+17 d^2-9\right) c_{1,1} \displaybreak[1] \\ 
\cF(Z\bar{X}\hat{1}\bar{X}ZW) & = \left(2 d^4-11 d^2+3\right) c_{0,2}+\left(-2 d^4+11 d^2-5\right) c_{1,1} \displaybreak[1] \\ 
\cF(Z\bar{X}\hat{1}\bar{X}YX) & = \left(-4 d^4+23 d^2-14\right) c_{1,1}-c_{0,1}-c_{0,2} \displaybreak[1] \\ 
\cF(Z\bar{X}\hat{1}\bar{X}YW) & = \left(d^4-5 d^2+1\right) c_{0,4}+\left(-2 d^4+11 d^2-6\right) c_{1,1} \displaybreak[1] \\ 
\cF(Z\bar{X}\hat{Z}\bar{X}1X) & = \left(-d^4+6 d^2-3\right) c_{0,1}+\left(-2 d^4+11 d^2-6\right) c_{1,1}-c_{1,2} \displaybreak[1] \\ 
\cF(Z\bar{X}\hat{Z}\bar{X}ZX) & = \left(d^4-6 d^2+3\right) c_{0,2}+\left(-6 d^4+34 d^2-19\right) c_{1,1}+\left(-3 d^4+17 d^2-9\right) c_{1,2} \displaybreak[1] \\ 
\cF(Z\bar{X}\hat{Z}\bar{X}ZW) & = \left(d^4-5 d^2+2\right) c_{0,2}+\left(-3 d^4+17 d^2-9\right) c_{1,1}+\left(-2 d^4+11 d^2-5\right) c_{1,2} \displaybreak[1] \\ 
\cF(Z\bar{X}\hat{Z}\bar{X}YX) & = \left(3 d^4-17 d^2+10\right) c_{0,1}+\left(3 d^4-17 d^2+10\right) c_{0,2}+\left(2 d^4-11 d^2+6\right) c_{0,3}\\ & \quad +\left(-9 d^4+51 d^2-29\right) c_{1,1}+\left(-4 d^4+23 d^2-14\right) c_{1,2} \displaybreak[1] \\ 
\cF(Z\bar{X}\hat{Z}\bar{X}YW) & = \left(-d^4+5 d^2-2\right) c_{0,2}+\left(d^4-6 d^2+4\right) c_{0,4}+\left(-3 d^4+17 d^2-10\right) c_{1,1}\\ & \quad +\left(-2 d^4+11 d^2-6\right) c_{1,2} \displaybreak[1] \\ 
\cF(Z\bar{X}\hat{Z}\bar{W}ZX) & = \left(d^4-5 d^2+2\right) c_{0,2}+\left(2 d^4-11 d^2+5\right) c_{1,2}+\left(3 d^4-17 d^2+10\right) c_{1,3} \displaybreak[1] \\ 
\cF(Z\bar{X}\hat{Z}\bar{W}ZW) & = \left(d^4-6 d^2+2\right) c_{0,2}+\left(2 d^4-11 d^2+4\right) c_{1,2}+\left(d^4-6 d^2+4\right) c_{1,3} \displaybreak[1] \\ 
\cF(Z\bar{X}\hat{Z}\bar{W}YX) & = \left(d^4-6 d^2+4\right) c_{0,1}+\left(d^4-6 d^2+4\right) c_{0,2}+\left(d^4-5 d^2+3\right) c_{0,3}\\ & \quad +\left(2 d^4-11 d^2+5\right) c_{1,2}+\left(4 d^4-23 d^2+13\right) c_{1,3} \displaybreak[1] \\ 
\cF(Z\bar{X}\hat{Z}\bar{W}YW) & = d^2 \left(-c_{0,2}\right)+\left(d^4-5 d^2+2\right) c_{1,2}+c_{0,4}\\ & \quad +\left(d^4-6 d^2+4\right) c_{1,3} \displaybreak[1] \\ 
\cF(Z\bar{X}\hat{Y}\bar{X}1X) & = \left(1-d^2\right) c_{0,1}+\left(2 d^4-11 d^2+7\right) c_{1,1}+c_{1,2} \displaybreak[1] \\ 
\cF(Z\bar{X}\hat{Y}\bar{X}ZX) & = \left(-2 d^4+11 d^2-4\right) c_{0,2}+\left(9 d^4-51 d^2+28\right) c_{1,1}+\left(3 d^4-17 d^2+9\right) c_{1,2} \displaybreak[1] \\ 
\cF(Z\bar{X}\hat{Y}\bar{X}ZW) & = \left(-3 d^4+16 d^2-5\right) c_{0,2}+\left(5 d^4-28 d^2+14\right) c_{1,1}+\left(2 d^4-11 d^2+5\right) c_{1,2} \displaybreak[1] \\ 
\cF(Z\bar{X}\hat{Y}\bar{X}YX) & = \left(-3 d^4+17 d^2-9\right) c_{0,1}+\left(-3 d^4+17 d^2-9\right) c_{0,2}+\left(-2 d^4+11 d^2-6\right) c_{0,3}\\ & \quad +\left(13 d^4-74 d^2+43\right) c_{1,1}+\left(4 d^4-23 d^2+14\right) c_{1,2} \displaybreak[1] \\ 
\cF(Z\bar{X}\hat{Y}\bar{X}YW) & = \left(d^4-5 d^2+2\right) c_{0,2}+\left(-2 d^4+11 d^2-5\right) c_{0,4}+\left(5 d^4-28 d^2+16\right) c_{1,1}\\ & \quad +\left(2 d^4-11 d^2+6\right) c_{1,2} \displaybreak[1] \\ 
\cF(Z\bar{X}\hat{Y}\bar{W}ZX) & = d^2 \left(-c_{0,2}\right)+\left(d^4-6 d^2+4\right) c_{1,1}+\left(d^4-6 d^2+4\right) c_{1,2}\\ & \quad +\left(-2 d^4+11 d^2-5\right) c_{1,4} \displaybreak[1] \\ 
\cF(Z\bar{X}\hat{Y}\bar{W}ZW) & = \left(d^4-4 d^2+1\right) c_{0,2}+c_{1,1}+c_{1,2}\\ & \quad +\left(-2 d^4+11 d^2-4\right) c_{1,4} \displaybreak[1] \\ 
\cF(Z\bar{X}\hat{Y}\bar{W}YX) & = \left(-d^4+5 d^2-3\right) c_{0,1}+\left(-d^4+5 d^2-3\right) c_{0,2}+\left(-d^4+6 d^2-3\right) c_{0,3}\\ & \quad +\left(2 d^4-11 d^2+6\right) c_{1,1}+\left(2 d^4-11 d^2+6\right) c_{1,2}+\left(-2 d^4+11 d^2-5\right) c_{1,4} \displaybreak[1] \\ 
\cF(Z\bar{X}\hat{Y}\bar{W}YW) & = \left(d^2-1\right) c_{0,4}+\left(d^4-6 d^2+2\right) c_{0,2}+c_{1,1}\\ & \quad +\left(-d^4+5 d^2-2\right) c_{1,4}+c_{1,2} \displaybreak[1] \\ 
\cF(Z\bar{X}\hat{Y}\bar{g}YX) & = \left(-d^4+6 d^2-3\right) c_{0,1}+\left(-d^4+6 d^2-3\right) c_{0,2}+\left(-d^4+6 d^2-3\right) c_{0,3}\\ & \quad +\left(-2 d^4+11 d^2-6\right) c_{1,1} \displaybreak[1] \\ 
\cF(Z\bar{X}\hat{Y}\bar{g}YW) & = \left(d^4-6 d^2+2\right) c_{0,2}+\left(-d^4+5 d^2-2\right) c_{0,4}+\left(-d^4+6 d^2-3\right) c_{1,1} \displaybreak[1] \\ 
\cF(Z\bar{W}\hat{Z}\bar{X}1X) & = \left(d^4-5 d^2+3\right) c_{0,1}+\left(-d^4+6 d^2-4\right) c_{1,2}+\left(3 d^4-17 d^2+9\right) c_{1,4} \displaybreak[1] \\ 
\cF(Z\bar{W}\hat{Z}\bar{X}ZX) & = \left(-d^4+6 d^2-3\right) c_{0,2}+\left(-3 d^4+17 d^2-9\right) c_{1,2}+\left(6 d^4-34 d^2+19\right) c_{1,4} \displaybreak[1] \\ 
\cF(Z\bar{W}\hat{Z}\bar{X}ZW) & = \left(-d^4+5 d^2-2\right) c_{0,2}+\left(-2 d^4+11 d^2-5\right) c_{1,2}+\left(3 d^4-17 d^2+9\right) c_{1,4} \displaybreak[1] \\ 
\cF(Z\bar{W}\hat{Z}\bar{X}YX) & = \left(-4 d^4+23 d^2-13\right) c_{0,1}+\left(-4 d^4+23 d^2-13\right) c_{0,2}+\left(-3 d^4+17 d^2-10\right) c_{0,3}\\ & \quad +\left(-3 d^4+17 d^2-10\right) c_{1,2}+\left(9 d^4-51 d^2+28\right) c_{1,4} \displaybreak[1] \\ 
\cF(Z\bar{W}\hat{Z}\bar{X}YW) & = \left(-2 d^4+11 d^2-6\right) c_{0,4}+\left(-2 d^4+11 d^2-5\right) c_{1,2}+c_{0,2}\\ & \quad +\left(3 d^4-17 d^2+10\right) c_{1,4} \displaybreak[1] \\ 
\cF(Z\bar{W}\hat{Z}\bar{W}ZX) & = \left(d^4-5 d^2+1\right) c_{0,2}+\left(2 d^4-11 d^2+4\right) c_{1,2} \displaybreak[1] \\ 
\cF(Z\bar{W}\hat{Z}\bar{W}ZW) & = \left(2 d^4-11 d^2+3\right) c_{0,2}+\left(3 d^4-16 d^2+5\right) c_{1,2} \displaybreak[1] \\ 
\cF(Z\bar{W}\hat{Z}\bar{W}YX) & = \left(-2 d^4+11 d^2-6\right) c_{0,1}+\left(-2 d^4+11 d^2-6\right) c_{0,2}+\left(-2 d^4+11 d^2-6\right) c_{0,3}\\ & \quad +\left(d^4-6 d^2+3\right) c_{1,2} \displaybreak[1] \\ 
\cF(Z\bar{W}\hat{Z}\bar{W}YW) & = \left(d^4-5 d^2+2\right) c_{0,2}+\left(-d^4+6 d^2-3\right) c_{0,4}+\left(d^4-5 d^2+2\right) c_{1,2} \displaybreak[1] \\ 
\cF(Z\bar{W}\hat{Y}\bar{X}1X) & = \left(-d^4+6 d^2-3\right) c_{0,1}+\left(-3 d^4+17 d^2-9\right) c_{1,1}+\left(-3 d^4+17 d^2-9\right) c_{1,2}\\ & \quad +\left(-3 d^4+17 d^2-10\right) c_{1,3} \displaybreak[1] \\ 
\cF(Z\bar{W}\hat{Y}\bar{X}ZX) & = \left(d^4-6 d^2+3\right) c_{0,2}+\left(-6 d^4+34 d^2-19\right) c_{1,1}+\left(-6 d^4+34 d^2-19\right) c_{1,2}\\ & \quad +\left(-6 d^4+34 d^2-19\right) c_{1,3} \displaybreak[1] \\ 
\cF(Z\bar{W}\hat{Y}\bar{X}ZW) & = \left(d^4-5 d^2+2\right) c_{0,2}+\left(-3 d^4+17 d^2-9\right) c_{1,1}+\left(-3 d^4+17 d^2-9\right) c_{1,2}\\ & \quad +\left(-3 d^4+17 d^2-9\right) c_{1,3} \displaybreak[1] \\ 
\cF(Z\bar{W}\hat{Y}\bar{X}YX) & = \left(4 d^4-23 d^2+14\right) c_{0,1}+\left(4 d^4-23 d^2+14\right) c_{0,2}+\left(4 d^4-23 d^2+13\right) c_{0,3}\\ & \quad +\left(-9 d^4+51 d^2-28\right) c_{1,1}+\left(-9 d^4+51 d^2-28\right) c_{1,2}+\left(-7 d^4+40 d^2-23\right) c_{1,3} \displaybreak[1] \\ 
\cF(Z\bar{W}\hat{Y}\bar{X}YW) & = \left(-d^4+6 d^2-3\right) c_{0,2}+\left(d^4-6 d^2+5\right) c_{0,4}+\left(-3 d^4+17 d^2-10\right) c_{1,1}\\ & \quad +\left(-3 d^4+17 d^2-10\right) c_{1,2}+\left(-3 d^4+17 d^2-9\right) c_{1,3} \displaybreak[1] \\ 
\cF(Z\bar{W}\hat{Y}\bar{W}ZX) & = \left(-d^4+5 d^2-1\right) c_{0,2}+\left(-2 d^4+11 d^2-4\right) c_{1,2} \displaybreak[1] \\ 
\cF(Z\bar{W}\hat{Y}\bar{W}ZW) & = \left(-2 d^4+11 d^2-3\right) c_{0,2}+\left(-3 d^4+16 d^2-5\right) c_{1,2} \displaybreak[1] \\ 
\cF(Z\bar{W}\hat{Y}\bar{W}YX) & = \left(2 d^4-11 d^2+6\right) c_{0,1}+\left(2 d^4-11 d^2+6\right) c_{0,2}+\left(2 d^4-11 d^2+6\right) c_{0,3}\\ & \quad +\left(-d^4+6 d^2-3\right) c_{1,2} \displaybreak[1] \\ 
\cF(Z\bar{W}\hat{Y}\bar{W}YW) & = \left(-d^4+5 d^2-2\right) c_{0,2}+\left(d^4-6 d^2+3\right) c_{0,4}+\left(-d^4+5 d^2-2\right) c_{1,2} \displaybreak[1] \\ 
\cF(Z\bar{W}\hat{Y}\bar{g}YX) & = \left(d^4-6 d^2+4\right) c_{0,1}+\left(d^4-6 d^2+4\right) c_{0,2}+\left(2 d^4-11 d^2+6\right) c_{0,3}\\ & \quad +\left(d^4-6 d^2+4\right) c_{1,1} \displaybreak[1] \\ 
\cF(Z\bar{W}\hat{Y}\bar{g}YW) & = \left(-d^4+5 d^2-2\right) c_{0,2}+\left(d^4-6 d^2+3\right) c_{1,1}+c_{0,4} \displaybreak[1] \\ 
\cF(Y\bar{X}\hat{1}\bar{X}1X) & = \left(d^4-6 d^2+3\right) c_{0,1}+\left(-d^4+6 d^2-4\right) c_{1,1} \displaybreak[1] \\ 
\cF(Y\bar{X}\hat{1}\bar{X}ZX) & = \left(-4 d^4+23 d^2-14\right) c_{1,1}-c_{0,1}-c_{0,2} \displaybreak[1] \\ 
\cF(Y\bar{X}\hat{1}\bar{X}ZW) & = \left(-3 d^4+17 d^2-10\right) c_{1,1}+c_{0,3} \displaybreak[1] \\ 
\cF(Y\bar{X}\hat{1}\bar{X}YX) & = \left(-2 d^4+12 d^2-8\right) c_{0,1}+\left(-d^4+6 d^2-4\right) c_{0,2}+\left(-7 d^4+40 d^2-24\right) c_{1,1} \displaybreak[1] \\ 
\cF(Y\bar{X}\hat{1}\bar{X}YW) & = \left(-4 d^4+23 d^2-14\right) c_{1,1}-c_{0,1}-c_{0,2} \displaybreak[1] \\ 
\cF(Y\bar{X}\hat{1}\bar{X}Yg) & = \left(d^4-6 d^2+3\right) c_{0,1}+\left(-d^4+6 d^2-4\right) c_{1,1} \displaybreak[1] \\ 
\cF(Y\bar{X}\hat{Z}\bar{X}1X) & = \left(-2 d^4+12 d^2-7\right) c_{0,1}+\left(-3 d^4+17 d^2-10\right) c_{1,1}+\left(-d^4+6 d^2-4\right) c_{1,2} \displaybreak[1] \\ 
\cF(Y\bar{X}\hat{Z}\bar{X}ZX) & = \left(3 d^4-17 d^2+10\right) c_{0,1}+\left(3 d^4-17 d^2+10\right) c_{0,2}+\left(-d^4+6 d^2-4\right) c_{0,4}\\ & \quad +\left(-9 d^4+51 d^2-29\right) c_{1,1}+\left(-4 d^4+23 d^2-14\right) c_{1,2} \displaybreak[1] \\ 
\cF(Y\bar{X}\hat{Z}\bar{X}ZW) & = \left(-3 d^4+17 d^2-10\right) c_{0,3}+\left(-6 d^4+34 d^2-19\right) c_{1,1}-c_{0,2}\\ & \quad +\left(-3 d^4+17 d^2-10\right) c_{1,2} \displaybreak[1] \\ 
\cF(Y\bar{X}\hat{Z}\bar{X}YX) & = \left(15 d^4-85 d^2+48\right) c_{0,1}+\left(6 d^4-34 d^2+19\right) c_{0,2}+\left(-13 d^4+74 d^2-43\right) c_{1,1}\\ & \quad +\left(-7 d^4+40 d^2-24\right) c_{1,2} \displaybreak[1] \\ 
\cF(Y\bar{X}\hat{Z}\bar{X}YW) & = \left(4 d^4-23 d^2+13\right) c_{0,1}+\left(3 d^4-17 d^2+9\right) c_{0,2}+\left(-7 d^4+40 d^2-23\right) c_{1,1}\\ & \quad +\left(-4 d^4+23 d^2-14\right) c_{1,2} \displaybreak[1] \\ 
\cF(Y\bar{X}\hat{Z}\bar{X}Yg) & = \left(-3 d^4+17 d^2-10\right) c_{0,1}+\left(-d^4+6 d^2-4\right) c_{1,1}+\left(-d^4+6 d^2-4\right) c_{1,2} \displaybreak[1] \\ 
\cF(Y\bar{X}\hat{Z}\bar{W}ZX) & = \left(2 d^4-11 d^2+6\right) c_{0,1}+\left(2 d^4-11 d^2+6\right) c_{0,2}+\left(-2 d^4+11 d^2-5\right) c_{0,4}\\ & \quad +\left(d^4-6 d^2+4\right) c_{1,2}+\left(4 d^4-23 d^2+14\right) c_{1,3} \displaybreak[1] \\ 
\cF(Y\bar{X}\hat{Z}\bar{W}ZW) & = \left(-2 d^4+11 d^2-4\right) c_{0,2}+\left(-2 d^4+11 d^2-6\right) c_{0,3}+\left(5 d^2-d^4\right) c_{1,2}\\ & \quad +\left(3 d^4-17 d^2+10\right) c_{1,3} \displaybreak[1] \\ 
\cF(Y\bar{X}\hat{Z}\bar{W}YX) & = \left(6 d^4-35 d^2+20\right) c_{0,1}+\left(2 d^4-12 d^2+7\right) c_{0,2}+\left(d^4-6 d^2+4\right) c_{1,2}\\ & \quad +\left(6 d^4-34 d^2+19\right) c_{1,3} \displaybreak[1] \\ 
\cF(Y\bar{X}\hat{Z}\bar{W}YW) & = \left(2 d^4-11 d^2+6\right) c_{0,1}+\left(d^4-5 d^2+3\right) c_{0,2}+c_{1,2}\\ & \quad +\left(3 d^4-17 d^2+10\right) c_{1,3} \displaybreak[1] \\ 
\cF(Y\bar{X}\hat{Z}\bar{W}Yg) & = \left(-d^4+7 d^2-4\right) c_{0,1}+\left(d^4-6 d^2+3\right) c_{1,3} \displaybreak[1] \\ 
\cF(Y\bar{X}\hat{Y}\bar{X}1X) & = \left(d^4-6 d^2+4\right) c_{0,1}+\left(4 d^4-23 d^2+14\right) c_{1,1}+\left(d^4-6 d^2+4\right) c_{1,2} \displaybreak[1] \\ 
\cF(Y\bar{X}\hat{Y}\bar{X}ZX) & = \left(-3 d^4+17 d^2-9\right) c_{0,1}+\left(-3 d^4+17 d^2-9\right) c_{0,2}+\left(d^4-6 d^2+4\right) c_{0,4}\\ & \quad +\left(13 d^4-74 d^2+43\right) c_{1,1}+\left(4 d^4-23 d^2+14\right) c_{1,2} \displaybreak[1] \\ 
\cF(Y\bar{X}\hat{Y}\bar{X}ZW) & = \left(3 d^4-17 d^2+9\right) c_{0,3}+\left(9 d^4-51 d^2+29\right) c_{1,1}+c_{0,2}\\ & \quad +\left(3 d^4-17 d^2+10\right) c_{1,2} \displaybreak[1] \\ 
\cF(Y\bar{X}\hat{Y}\bar{X}YX) & = \left(-13 d^4+73 d^2-40\right) c_{0,1}+\left(-5 d^4+28 d^2-15\right) c_{0,2}+\left(20 d^4-114 d^2+67\right) c_{1,1}\\ & \quad +\left(7 d^4-40 d^2+24\right) c_{1,2} \displaybreak[1] \\ 
\cF(Y\bar{X}\hat{Y}\bar{X}YW) & = \left(-4 d^4+23 d^2-12\right) c_{0,1}+\left(-3 d^4+17 d^2-8\right) c_{0,2}+\left(11 d^4-63 d^2+37\right) c_{1,1}\\ & \quad +\left(4 d^4-23 d^2+14\right) c_{1,2} \displaybreak[1] \\ 
\cF(Y\bar{X}\hat{Y}\bar{X}Yg) & = \left(2 d^4-11 d^2+7\right) c_{0,1}+\left(2 d^4-12 d^2+8\right) c_{1,1}+\left(d^4-6 d^2+4\right) c_{1,2} \displaybreak[1] \\ 
\cF(Y\bar{X}\hat{Y}\bar{W}ZX) & = \left(-2 d^4+11 d^2-5\right) c_{0,1}+\left(-2 d^4+11 d^2-5\right) c_{0,2}+\left(d^4-6 d^2+3\right) c_{0,4}\\ & \quad +\left(d^4-6 d^2+5\right) c_{1,1}+\left(d^4-6 d^2+5\right) c_{1,2}+\left(-d^4+6 d^2-4\right) c_{1,4} \displaybreak[1] \\ 
\cF(Y\bar{X}\hat{Y}\bar{W}ZW) & = \left(d^4-5 d^2+2\right) c_{0,2}+\left(2 d^4-11 d^2+5\right) c_{0,3}+\left(d^4-6 d^2+4\right) c_{1,1}\\ & \quad +\left(d^4-6 d^2+4\right) c_{1,2}+\left(d^4-5 d^2\right) c_{1,4} \displaybreak[1] \\ 
\cF(Y\bar{X}\hat{Y}\bar{W}YX) & = \left(-4 d^4+22 d^2-13\right) c_{0,1}+\left(-d^4+6 d^2-4\right) c_{0,2}+\left(3 d^4-17 d^2+9\right) c_{1,1}\\ & \quad +\left(3 d^4-17 d^2+9\right) c_{1,2}+\left(-d^4+6 d^2-4\right) c_{1,4} \displaybreak[1] \\ 
\cF(Y\bar{X}\hat{Y}\bar{W}YW) & = \left(-d^4+7 d^2-4\right) c_{0,1}+\left(d^4-6 d^2+4\right) c_{1,1}-c_{0,2}\\ & \quad +\left(d^4-6 d^2+4\right) c_{1,2}-c_{1,4} \displaybreak[1] \\ 
\cF(Y\bar{X}\hat{Y}\bar{W}Yg) & = \left(d^4-6 d^2+3\right) c_{0,1}+\left(d^4-5 d^2+2\right) c_{1,1}+\left(d^4-5 d^2+2\right) c_{1,2} \displaybreak[1] \\ 
\cF(Y\bar{X}\hat{Y}\bar{g}YX) & = \left(-3 d^4+17 d^2-9\right) c_{0,1}+\left(-d^4+6 d^2-3\right) c_{0,2}+\left(-3 d^4+17 d^2-9\right) c_{1,1} \displaybreak[1] \\ 
\cF(Y\bar{X}\hat{Y}\bar{g}YW) & = \left(-d^4+6 d^2-3\right) c_{0,1}+\left(-d^4+5 d^2-2\right) c_{0,2}+\left(-2 d^4+11 d^2-6\right) c_{1,1} \displaybreak[1] \\ 
\cF(Y\bar{X}\hat{Y}\bar{g}Yg) & = \left(1-d^2\right) c_{0,1}+\left(-d^4+6 d^2-3\right) c_{1,1} \displaybreak[1] \\ 
\cF(Y\bar{W}\hat{Z}\bar{X}1X) & = \left(4 d^4-23 d^2+13\right) c_{0,1}+\left(-3 d^4+17 d^2-9\right) c_{1,2}+\left(6 d^4-34 d^2+19\right) c_{1,4} \displaybreak[1] \\ 
\cF(Y\bar{W}\hat{Z}\bar{X}ZX) & = \left(-6 d^4+34 d^2-19\right) c_{0,1}+\left(-6 d^4+34 d^2-19\right) c_{0,2}+\left(3 d^4-17 d^2+10\right) c_{0,4}\\ & \quad +\left(-4 d^4+23 d^2-14\right) c_{1,2}+\left(13 d^4-74 d^2+42\right) c_{1,4} \displaybreak[1] \\ 
\cF(Y\bar{W}\hat{Z}\bar{X}ZW) & = \left(d^4-6 d^2+4\right) c_{0,2}+\left(6 d^4-34 d^2+19\right) c_{0,3}+\left(-3 d^4+17 d^2-9\right) c_{1,2}\\ & \quad +\left(9 d^4-51 d^2+28\right) c_{1,4} \displaybreak[1] \\ 
\cF(Y\bar{W}\hat{Z}\bar{X}YX) & = \left(-25 d^4+142 d^2-81\right) c_{0,1}+\left(-9 d^4+51 d^2-29\right) c_{0,2}+\left(-6 d^4+34 d^2-19\right) c_{1,2}\\ & \quad +\left(19 d^4-108 d^2+61\right) c_{1,4} \displaybreak[1] \\ 
\cF(Y\bar{W}\hat{Z}\bar{X}YW) & = \left(-7 d^4+40 d^2-23\right) c_{0,1}+\left(-4 d^4+23 d^2-13\right) c_{0,2}+\left(-3 d^4+17 d^2-9\right) c_{1,2}\\ & \quad +\left(10 d^4-57 d^2+33\right) c_{1,4} \displaybreak[1] \\ 
\cF(Y\bar{W}\hat{Z}\bar{X}Yg) & = \left(5 d^4-28 d^2+16\right) c_{0,1}+\left(3 d^4-17 d^2+9\right) c_{1,4} \displaybreak[1] \\ 
\cF(Y\bar{W}\hat{Z}\bar{W}ZX) & = \left(-3 d^4+17 d^2-10\right) c_{0,1}+\left(-3 d^4+17 d^2-10\right) c_{0,2}+\left(3 d^4-17 d^2+9\right) c_{0,4}\\ & \quad +\left(d^4-6 d^2+4\right) c_{1,2} \displaybreak[1] \\ 
\cF(Y\bar{W}\hat{Z}\bar{W}ZW) & = \left(2 d^4-11 d^2+5\right) c_{0,2}+\left(3 d^4-17 d^2+10\right) c_{0,3}+\left(2 d^4-11 d^2+5\right) c_{1,2} \displaybreak[1] \\ 
\cF(Y\bar{W}\hat{Z}\bar{W}YX) & = \left(-10 d^4+57 d^2-33\right) c_{0,1}+\left(-3 d^4+17 d^2-10\right) c_{0,2}+\left(d^4-6 d^2+5\right) c_{1,2} \displaybreak[1] \\ 
\cF(Y\bar{W}\hat{Z}\bar{W}YW) & = \left(-3 d^4+17 d^2-10\right) c_{0,1}+\left(-d^4+6 d^2-4\right) c_{0,2}+\left(d^4-6 d^2+4\right) c_{1,2} \displaybreak[1] \\ 
\cF(Y\bar{W}\hat{Z}\bar{W}Yg) & = \left(2 d^4-11 d^2+6\right) c_{0,1}+c_{1,2} \displaybreak[1] \\ 
\cF(Y\bar{W}\hat{Y}\bar{X}1X) & = \left(-4 d^4+23 d^2-13\right) c_{0,1}+\left(-6 d^4+34 d^2-19\right) c_{1,1}+\left(-6 d^4+34 d^2-19\right) c_{1,2}\\ & \quad +\left(-6 d^4+34 d^2-19\right) c_{1,3} \displaybreak[1] \\ 
\cF(Y\bar{W}\hat{Y}\bar{X}ZX) & = \left(7 d^4-40 d^2+23\right) c_{0,1}+\left(7 d^4-40 d^2+23\right) c_{0,2}+\left(-4 d^4+23 d^2-14\right) c_{0,4}\\ & \quad +\left(-13 d^4+74 d^2-42\right) c_{1,1}+\left(-13 d^4+74 d^2-42\right) c_{1,2}+\left(-10 d^4+57 d^2-33\right) c_{1,3} \displaybreak[1] \\ 
\cF(Y\bar{W}\hat{Y}\bar{X}ZW) & = \left(-d^4+6 d^2-5\right) c_{0,2}+\left(-7 d^4+40 d^2-23\right) c_{0,3}+\left(-9 d^4+51 d^2-28\right) c_{1,1}\\ & \quad +\left(-9 d^4+51 d^2-28\right) c_{1,2}+\left(-6 d^4+34 d^2-19\right) c_{1,3} \displaybreak[1] \\ 
\cF(Y\bar{W}\hat{Y}\bar{X}YX) & = \left(30 d^4-171 d^2+98\right) c_{0,1}+\left(10 d^4-57 d^2+33\right) c_{0,2}+\left(-19 d^4+108 d^2-61\right) c_{1,1}\\ & \quad +\left(-19 d^4+108 d^2-61\right) c_{1,2}+\left(-13 d^4+74 d^2-42\right) c_{1,3} \displaybreak[1] \\ 
\cF(Y\bar{W}\hat{Y}\bar{X}YW) & = \left(9 d^4-51 d^2+29\right) c_{0,1}+\left(4 d^4-23 d^2+14\right) c_{0,2}+\left(-10 d^4+57 d^2-33\right) c_{1,1}\\ & \quad +\left(-10 d^4+57 d^2-33\right) c_{1,2}+\left(-6 d^4+34 d^2-19\right) c_{1,3} \displaybreak[1] \\ 
\cF(Y\bar{W}\hat{Y}\bar{X}Yg) & = \left(-6 d^4+34 d^2-19\right) c_{0,1}+\left(-3 d^4+17 d^2-9\right) c_{1,1}+\left(-3 d^4+17 d^2-9\right) c_{1,2} \displaybreak[1] \\ 
\cF(Y\bar{W}\hat{Y}\bar{W}ZX) & = \left(3 d^4-17 d^2+10\right) c_{0,1}+\left(3 d^4-17 d^2+10\right) c_{0,2}+\left(-3 d^4+17 d^2-9\right) c_{0,4}\\ & \quad +\left(-d^4+6 d^2-4\right) c_{1,2} \displaybreak[1] \\ 
\cF(Y\bar{W}\hat{Y}\bar{W}ZW) & = \left(-2 d^4+11 d^2-5\right) c_{0,2}+\left(-3 d^4+17 d^2-10\right) c_{0,3}+\left(-2 d^4+11 d^2-5\right) c_{1,2} \displaybreak[1] \\ 
\cF(Y\bar{W}\hat{Y}\bar{W}YX) & = \left(10 d^4-57 d^2+33\right) c_{0,1}+\left(3 d^4-17 d^2+10\right) c_{0,2}+\left(-d^4+6 d^2-5\right) c_{1,2} \displaybreak[1] \\ 
\cF(Y\bar{W}\hat{Y}\bar{W}YW) & = \left(3 d^4-17 d^2+10\right) c_{0,1}+\left(d^4-6 d^2+4\right) c_{0,2}+\left(-d^4+6 d^2-4\right) c_{1,2} \displaybreak[1] \\ 
\cF(Y\bar{W}\hat{Y}\bar{W}Yg) & = \left(-2 d^4+11 d^2-6\right) c_{0,1}-c_{1,2} \displaybreak[1] \\ 
\cF(Y\bar{W}\hat{Y}\bar{g}YX) & = \left(5 d^4-29 d^2+17\right) c_{0,1}+\left(d^4-6 d^2+4\right) c_{0,2}+\left(3 d^4-17 d^2+10\right) c_{1,1} \displaybreak[1] \\ 
\cF(Y\bar{W}\hat{Y}\bar{g}YW) & = \left(2 d^4-11 d^2+6\right) c_{0,1}+\left(3 d^4-17 d^2+9\right) c_{1,1}+c_{0,2} \displaybreak[1] \\ 
\cF(Y\bar{W}\hat{Y}\bar{g}Yg) & = \left(-d^4+6 d^2-3\right) c_{0,1}+\left(d^4-6 d^2+4\right) c_{1,1} \displaybreak[1] \\ 
\cF(Y\bar{g}\hat{Y}\bar{X}1X) & = \left(d^4-6 d^2+3\right) c_{0,1}+\left(-d^4+6 d^2-4\right) c_{1,1} \displaybreak[1] \\ 
\cF(Y\bar{g}\hat{Y}\bar{X}ZX) & = \left(-d^4+6 d^2-4\right) c_{0,1}+\left(-d^4+6 d^2-4\right) c_{0,2}+\left(2 d^4-11 d^2+5\right) c_{0,4}\\ & \quad +\left(-3 d^4+17 d^2-10\right) c_{1,1} \displaybreak[1] \\ 
\cF(Y\bar{g}\hat{Y}\bar{X}ZW) & = \left(2 d^4-11 d^2+4\right) c_{0,2}+\left(d^4-6 d^2+4\right) c_{0,3}+\left(-3 d^4+17 d^2-9\right) c_{1,1} \displaybreak[1] \\ 
\cF(Y\bar{g}\hat{Y}\bar{X}YX) & = \left(-4 d^4+23 d^2-14\right) c_{0,1}+\left(-d^4+6 d^2-4\right) c_{0,2}+\left(-4 d^4+23 d^2-14\right) c_{1,1} \displaybreak[1] \\ 
\cF(Y\bar{g}\hat{Y}\bar{X}YW) & = \left(-d^4+6 d^2-4\right) c_{0,1}+\left(-3 d^4+17 d^2-10\right) c_{1,1}-c_{0,2} \displaybreak[1] \\ 
\cF(Y\bar{g}\hat{Y}\bar{X}Yg) & = \left(d^4-6 d^2+3\right) c_{0,1}+\left(-d^4+6 d^2-4\right) c_{1,1} \displaybreak[1] \\ 
\cF(Y\bar{g}\hat{Y}\bar{W}ZX) & = -c_{0,1}-c_{0,2}+c_{0,4} -c_{1,1} \displaybreak[1] \\ 
\cF(Y\bar{g}\hat{Y}\bar{W}ZW) & = \left(-d^4+5 d^2-1\right) c_{0,2}+c_{0,3}-c_{1,1} \displaybreak[1] \\ 
\cF(Y\bar{g}\hat{Y}\bar{W}YX) & = \left(-2 d^4+10 d^2-5\right) c_{0,1}+\left(-d^4+5 d^2-2\right) c_{0,2}+\left(-d^4+6 d^2-3\right) c_{1,1} \displaybreak[1] \\ 
\cF(Y\bar{g}\hat{Y}\bar{W}YW) & = \left(d^2-1\right) c_{0,1}+\left(-d^4+6 d^2-2\right) c_{0,2}-c_{1,1} \displaybreak[1] \\ 
\cF(Y\bar{g}\hat{Y}\bar{W}Yg) & = d^2 c_{0,1}+\left(-d^4+5 d^2-2\right) c_{1,1} \displaybreak[1] \\ 
\cF(Y\bar{g}\hat{Y}\bar{g}YX) & = 0 \displaybreak[1] \\ 
\cF(Y\bar{g}\hat{Y}\bar{g}YW) & = 0 \displaybreak[1] \\ 
\cF(Y\bar{g}\hat{Y}\bar{g}Yg) & = 0 \displaybreak[1] \\ 
\end{align*}

\subsection{Flat generators for $\cB$}
\label{appendix:flat-generators-B}
Only the low weight eigenspace with eigenvalue $1$ in $\cG(\Gamma(\cB))_{3,+}$ contains any flat vectors, and these are spanned by the element $T$ below. (As usual, $d^2$ is the index of $\cA$ and $\cB$, the largest real root of $\lambda^3-6\lambda^2+5\lambda -1=0$, approximately $5.04892$.)

{\small
\begin{align*}
% generated by the command 'ToLaTeXString[GPA4Matrix[T], "T"]' in the Mathematica notebook code/connections-and-flat-elements.nb available with the arXiv sources of this article.
T_{1,f} & = \left(
\begin{array}{cc}
 0 & 0 \\
 0 & 0
\end{array}
\right) \displaybreak[1]\\
T_{1,B} & = \left(
\begin{array}{c}
 -5 d^4+34 d^2-20
\end{array}
\right) \displaybreak[1]\\
T_{1,F} & = \left(
\begin{array}{c}
 2 d^4-15 d^2+8
\end{array}
\right) \displaybreak[1]\\
T_{A,f} & = \left(
\begin{array}{cccc}
 0 & 0 & 9 d^4-50 d^2+29 & -5 d^4+27 d^2-13 \\
 0 & 0 & -9 d^4+50 d^2-29 & 5 d^4-27 d^2+13 \\
 4 d^4-16 d^2+9 & 3 d^4-19 d^2+12 & d^4-4 d^2+4 & -d^4+4 d^2+3 \\
 -4 d^4+16 d^2-9 & -3 d^4+19 d^2-12 & 8 d^4-46 d^2+25 & -4 d^4+23 d^2-16
\end{array}
\right) \displaybreak[1]\\
T_{A,B} & = \left(
\begin{array}{cccc}
 -9 d^4+50 d^2-29 & -3 d^4+19 d^2-12 & -6 d^4+31 d^2-17 & -4 d^4+23 d^2-16 \\
 d^4-4 d^2+4 & 4 d^4-23 d^2+16 & 8 d^4-46 d^2+25 & -6 d^4+31 d^2-10 \\
 -d^4+4 d^2-4 & -4 d^4+23 d^2-16 & -8 d^4+46 d^2-25 & 6 d^4-31 d^2+10 \\
 8 d^4-46 d^2+25 & -d^4+4 d^2-4 & -2 d^4+15 d^2-8 & 10 d^4-54 d^2+26
\end{array}
\right) \displaybreak[1]\\
T_{A,F} & = \left(
\begin{array}{ccccc}
 5 d^4-27 d^2+13 & -4 d^4+23 d^2-16 & 9 d^4-50 d^2+22 & 6 d^4-31 d^2+17 & -d^4+4 d^2-4 \\
 -d^4+4 d^2+3 & -6 d^4+31 d^2-10 & -10 d^4+54 d^2-12 & -2 d^4+8 d^2-1 & -9 d^4+50 d^2-22 \\
 -4 d^4+23 d^2-16 & 10 d^4-54 d^2+26 & d^4-4 d^2-10 & -4 d^4+23 d^2-16 & 10 d^4-54 d^2+26 \\
 6 d^4-31 d^2+17 & -d^4+4 d^2-4 & 9 d^4-50 d^2+22 & 5 d^4-27 d^2+13 & -4 d^4+23 d^2-16 \\
 -2 d^4+8 d^2-1 & -9 d^4+50 d^2-22 & -10 d^4+54 d^2-12 & -d^4+4 d^2+3 & -6 d^4+31 d^2-10
\end{array}
\right) \displaybreak[1]\\
T_{A,z} & = \left(
\begin{array}{c}
 2 d^4-15 d^2+8
\end{array}
\right) \displaybreak[1]\\
T_{A,D} & = \left(
\begin{array}{cc}
 -5 d^4+27 d^2-13 & 7 d^4-35 d^2+21 \\
 7 d^4-35 d^2+14 & 2 d^4-8 d^2+1
\end{array}
\right) \displaybreak[1]\\
T_{C,f} & = \left(
\begin{array}{c}
 2 d^4-15 d^2+8
\end{array}
\right) \displaybreak[1]\\
T_{C,B} & = \left(
\begin{array}{ccc}
 8 d^4-46 d^2+25 & 9 d^4-50 d^2+29 & 3 d^4-19 d^2+12 \\
 -8 d^4+46 d^2-25 & -9 d^4+50 d^2-29 & -3 d^4+19 d^2-12 \\
 8 d^4-46 d^2+25 & 9 d^4-50 d^2+29 & 3 d^4-19 d^2+12
\end{array}
\right) \displaybreak[1]\\
T_{C,F} & = \left(
\begin{array}{cc}
 2 d^4-8 d^2+1 & 7 d^4-35 d^2+14 \\
 7 d^4-35 d^2+21 & -5 d^4+27 d^2-13
\end{array}
\right) \displaybreak[1]\\
T_{C,D} & = \left(
\begin{array}{ccc}
 -3 d^4+19 d^2-12 & -d^4+4 d^2-4 & -3 d^4+19 d^2-12 \\
 3 d^4-19 d^2+12 & d^4-4 d^2+4 & 3 d^4-19 d^2+12 \\
 -3 d^4+19 d^2-12 & -d^4+4 d^2-4 & -3 d^4+19 d^2-12
\end{array}
\right) \displaybreak[1]\\
T_{G,f} & = \left(
\begin{array}{c}
 -5 d^4+34 d^2-20
\end{array}
\right) \displaybreak[1]\\
T_{G,B} & = \left(
\begin{array}{c}
 2 d^4-15 d^2+8
\end{array}
\right) \displaybreak[1]\\
T_{G,F} & = \left(
\begin{array}{cccc}
 -4 d^4+23 d^2-16 & -3 d^4+19 d^2-12 & 8 d^4-46 d^2+25 & -4 d^4+16 d^2-9 \\
 5 d^4-27 d^2+13 & 0 & -9 d^4+50 d^2-29 & 0 \\
 -d^4+4 d^2+3 & 3 d^4-19 d^2+12 & d^4-4 d^2+4 & 4 d^4-16 d^2+9 \\
 -5 d^4+27 d^2-13 & 0 & 9 d^4-50 d^2+29 & 0
\end{array}
\right) \displaybreak[1]\\
T_{G,z} & = \left(
\begin{array}{cc}
 0 & 0 \\
 0 & 0
\end{array}
\right) \displaybreak[1]\\
T_{G,D} & = \left(
\begin{array}{c}
 2 d^4-15 d^2+8
\end{array}
\right) \displaybreak[1]\\
T_{E,f} & = \left(
\begin{array}{c}
 2 d^4-15 d^2+8
\end{array}
\right) \displaybreak[1]\\
T_{E,B} & = \left(
\begin{array}{cc}
 -12 d^4+69 d^2-41 & -21 d^4+133 d^2-77 \\
 7 & 9 d^4-50 d^2+29
\end{array}
\right) \displaybreak[1]\\
T_{E,F} & = \left(
\begin{array}{cccc}
 10 d^4-54 d^2+26 & 8 d^4-46 d^2+25 & -d^4+4 d^2-4 & -2 d^4+15 d^2-8 \\
 -4 d^4+23 d^2-16 & -9 d^4+50 d^2-29 & -3 d^4+19 d^2-12 & -6 d^4+31 d^2-17 \\
 -6 d^4+31 d^2-10 & d^4-4 d^2+4 & 4 d^4-23 d^2+16 & 8 d^4-46 d^2+25 \\
 6 d^4-31 d^2+10 & -d^4+4 d^2-4 & -4 d^4+23 d^2-16 & -8 d^4+46 d^2-25
\end{array}
\right) \displaybreak[1]\\
T_{E,z} & = \left(
\begin{array}{c}
 -5 d^4+34 d^2-20
\end{array}
\right) \displaybreak[1]\\
T_{E,D} & = \left(
\begin{array}{ccc}
 8 d^4-46 d^2+25 & 3 d^4-19 d^2+12 & 9 d^4-50 d^2+29 \\
 8 d^4-46 d^2+25 & 3 d^4-19 d^2+12 & 9 d^4-50 d^2+29 \\
 -8 d^4+46 d^2-25 & -3 d^4+19 d^2-12 & -9 d^4+50 d^2-29
\end{array}
\right) \displaybreak[1]\\
\end{align*}
}

\bibliographystyle{alpha}
%Included for winedt:
%input "bibliography/bibliography.bib"
\bibliography{../../bibliography/bibliography}

\end{document}